\author{Mirko D'Ovidio\footnote{Dipartimento di Scienze di Base e Applicate per l'Ingegneria, via A. Scarpa 10, 00161 Rome (Italy),  e-mail:mirko.dovidio@uniroma1.it} \\{\em Sapienza University of Rome}}
\title{From Sturm-Liouville problems\\ to fractional and anomalous diffusions}
\newtheorem{te}{Theorem}
\newtheorem{prop}{Proposition}
\newtheorem{df}{Definition}
\newtheorem{lem}{Lemma}
\newtheorem{os}{Remark}
\newtheorem{coro}{Corollary}
\renewcommand{\H}{H^{m,n}_{p,q}\left[ x \bigg| \begin{array}{l} (a_i, \alpha_i)_{i=1, .. , p}\\ (b_j, \beta_j)_{j=1, .. , q}  \end{array} \right]}
\numberwithin{equation}{section}
\begin{document}

\maketitle
\begin{center}
\begin{minipage}{4in}
\textbf{Abstract} Some fractional and anomalous diffusions are driven by equations involving fractional derivatives in both time and space. Such diffusions are processes with randomly varying times. In representing the solutions to those equations, the explicit laws of certain stable processes turn out to be fundamental. This paper directs one's efforts towards the explicit representation of solutions to fractional and anomalous diffusions related to Sturm-Liouville problems of fractional order associated to fractional power function spaces. Furthermore, we study a new version of the Bochner's subordination rule and we establish some connections between subordination and space-fractional operator.
\end{minipage}
\end{center}

\textbf{Keywords}: Anomalous diffusion, Sturm-Liouville problem, Stable subordinator, Mellin convolution, Fox functions.

\section{Introduction and main results}

In recent years, many researchers have shown their interest in fractional and anomalous diffusions. The term fractional is achieved by replacing standard derivatives w.r.t.\ time $t$ with fractional derivatives, for instance, those of Riemann-Liouville or Dzhrbashyan-Caputo. Anomalous diffusion occurs, according to most of the significant  literature, when the mean square displacement (or time-dependent variance) is stretched by some index, say $\alpha \neq 1$ or, in other words proportional to a power $\alpha$ of time, for instance $t^\alpha$. Such anomalous feature can be found in transport phenomena in complex systems, e.g.\ in random fractal structures (see \citet{GioRom92}).

Fractional diffusions have been studied by several authors. \citet{Wyss86}, \citet{SWyss89} and later \citet{Hil00} studied the solutions to the heat-type fractional diffusion equation and presented such solutions in terms of Fox's functions. For the same equation, up to some scaling constant, \citet{OB03,OB09} represented the solutions by means of stable densities and found the explicit representations only in some cases. Different boundary value problems have also been studied by \citet{MezK00, BO09elastic}. In the papers by \citet{MLP01, MPS05, MPG07} the authors presented the solutions to space-time fractional equations by means of Wright functions or Mellin-Barnes integral representations, that is Fox's functions. See also \citet{MMP10} for a review on the Mainardi-Wright function.

For a general operator $\mathscr{A}$ acting on space, several results can also be listed. \citet{Nig86} gave a physical interpretation when $\mathscr{A}$ is the generator of a Markov process whereas \citet{Koc89, Koc90} first introduced a mathematical approach. \citet{Zasl94} introduced the fractional kinetic equation for Hamiltonian chaos. \citet{BM01} studied the problem when $\mathscr{A}$ is an infinitely divisible generator on a finite dimensional space. For a short survey of these results see \citet{NANERW}.

In general, the stochastic solutions to fractional diffusion equations can be realized through subordination. Indeed, for a guiding process $X(t)$ with generator $\mathscr{A}$ we have that $X(V(t))$ is governed by $\partial^\beta_t u= \mathscr{A} u $ where the process $V(t)$, $t>0$ is an inverse or hitting time process to a $\beta$-stable subordinator (see \citet{BM01}). The time-fractional derivative comes from the fact that $X(V(t))$ can be viewed as a scaling limit of continuous time random walk where the iid jumps are separated by iid power law waiting times (see \citet{MSheff04, MezK00secondo, RomAle94}). Results on the subordination principle for fractional evolution equations can also be found in \citet{Bazh00,Boch49}.

Anomalous diffusions can also be carried out by considering a fractional operator acting on the space. The problem of finding a suitable representation for a fractional power of a given operator $A$ defined in a Banach space $X$ has a long history.  The first definitions of fractional power of the Laplace operator were introduced by \citet{Boch49, Feller52}. For a closed linear operator $A$, the fractional operator $(-A)^\alpha$ has been investigated by many researchers, see e.g. \citet{Balak60, HoWe72, Kom66, KraSob59, Wata61}. Although the methods presented differ, each of those papers was primarily based on the integral representation
\[ -(-A)^\alpha \, f= \frac{\sin \pi \alpha}{\pi}  \int_0^\infty \lambda^{\alpha -1}(\lambda I - A )^{-1}\, A\, f\, d\lambda \]
for a well defined $f$ and $0 < \Re\{ \alpha \} < 1$ under
\begin{equation}
\begin{array}{rl} (i) & \lambda \in \rho(A) \, (\textrm{the resolvent set of  } A) \textrm{ for all } \lambda >0;\\
(ii) & \| \lambda (\lambda I - A)^{-1} \| < M < \infty \textrm{ for all }\lambda >0.  \end{array}
\label{cvbA}
\end{equation}
Different definitions can be also given by means of hyper singular integrals, see e.g. \citet{Samko00}.

In both cases, time and space fractional equations, the explicit representations of the law of stable processes and, those of the corresponding inverse processes, are fundamental in finding explicit solutions to fractional problems. \\

In this paper we study time and space fractional problems involving the operator $\mathcal{G}^{*}$ (see formula \eqref{OrdOpG}) which is the adjoint of an infinitesimal generator of non-negative diffusions $\mathcal{G}$. In particular, for $\mathfrak{w}(x)=x^{\gamma \mu -1}$, the second order differential operator
\begin{align*}
\mathcal{G} = & \frac{1}{\gamma^2\, \mathfrak{w}(x)} \frac{\partial}{\partial x} x^{\gamma\mu -\gamma +1} \frac{\partial}{\partial x} , \quad \gamma=\pm 1,\; \mu >0
\end{align*}
is the operator governing two related diffusions, the squared Bessel process $G_\mu(t)$, $t>0$ (for $\gamma=+1$) and its inverse process $E_\mu(t)$, $t>0$ (also known as reciprocal process of $G_\mu$, for $\gamma=-1$). Due to the fact that $P\{ E_\mu(t)<x\} = P\{ G_\mu(x)>t \}$ we refer to $E_\mu$ as the inverse of $G_\mu$.

For such processes we study the governing equations where the time derivative is replaced by its fractional counterpart and find solutions in bounded and unbounded domains. For the time-fractional equations on bounded domains we study Sturm-Liouville problems of fractional order associated with fractional power function spaces. A complete orthogonal  set of eigenfunctions (w.r.t. the weight function $\mathfrak{w}(x)$) arises naturally as solutions of the second-order differential equations involving $\mathcal{G}$ under appropriate boundary conditions and therefore, we obtain Sturm-Liouville boundary-value problems associated to the killed semigroups of $G_\mu$ and $E_\mu$. 

The fractional power of $\mathcal{G}^{*}$ (for $\gamma=+1$, that is the governing operator of the squared Bessel process $G_\mu$) is also examined. We find the explicit representation of $-(-\mathcal{G}^{*})^{\nu}$ for $\nu \in (0, 1)$ in terms of Riemann-Liouville derivatives and we discuss the properties of the corresponding subordinated process. Thus, we obtain a representation of the power $\nu$ of the composition of non commuting operators (formula \eqref{OrdOpG} below).

All fractional problems investigated in this work have stochastic solutions with randomly varying times which are subordinators. Such subordinators are denoted by $\mathfrak{H}^\nu_t$ and $\mathfrak{L}^\nu_t$. From the fact that $P\{ \mathfrak{L}^\nu_t < x \} = P\{ \mathfrak{H}^\nu_x > t\}$ we say that $\mathfrak{L}^\nu_t$ is the inverse of $\mathfrak{H}^\nu_t$ which is a positively skewed stable process with non-negative increments and therefore non-decreasing paths. This means that $\mathfrak{L}^\nu_t = \inf\{ x \geq 0\,:\, \mathfrak{H}^\nu_x \notin (0, t) \}$ can be regarded as hitting time. We find that, for $\nu=1/n$, $n \in \mathbb{N}$,
\begin{equation}
\mathfrak{H}^{\nu}_t \stackrel{law}{=} E_{\mu_1}(E_{\mu_2}(\ldots E_{\mu_n}((\nu t)^{1/\nu})\ldots)), \quad t>0\label{comp1}
\end{equation}
and 
\begin{equation}
\mathfrak{L}^{\nu}_t \stackrel{law}{=} \left[ G_{\mu_1}(G_{\mu_2}(\ldots G_{\mu_n}(t^\nu/\nu)\ldots)) \right]^{\nu}, \quad t>0\label{comp2}
\end{equation}
for suitable choices of $\boldsymbol{\mu} = (\mu_1, \ldots , \mu_n)$. Furthermore, we show that the compositions \eqref{comp1} and \eqref{comp2} hold for all $\boldsymbol{\mu} \in \mathscr{P}^{n}_{n+1}(n!) $ where
\begin{align*}
\mathscr{P}_{\kappa}^{n}\left( \varrho \right) = & \left\lbrace \bar{\wp} \in \mathbb{R}^{n}_{+}\,:\,  \bar{\wp} = \frac{\bar{\upsilon}}{\kappa},\, \bar{\upsilon} = (\upsilon_1 , \ldots , \upsilon_n) \in \mathbb{N}^{n}, \, \prod_{j=1}^{n} \upsilon_j = \varrho  \right\rbrace
\end{align*}
with $m, \kappa, \varrho \in \mathbb{N}$. This result permit us to explicitly write the laws of $\mathfrak{H}^{1/n}_t$ and $\mathfrak{L}^{1/n}_t$ and therefore the laws of the processes subordinated by them. In particular, we obtain useful representations of solutions to fractional equations involving general operators but representing anomalous diffusions realized through subordination.\\

The main results of this work are collected in Section \ref{sezFAD}. We present auxiliary results and proofs in the remaining sections of the paper.

\section{Introductory remarks and notations}
\label{intNot}
We first introduce the following notation:
\begin{itemize}
\item $s_\nu$ is the law of the symmetric stable process $S^\nu_t$,
\item $h_\nu$ is the law of the stable subordinator $\mathfrak{H}^\nu_t$,
\item $l_\nu$ is the law of $\mathfrak{L}^\nu_t$ which is the inverse to $\mathfrak{H}^\nu_t$,
\item $g_\mu = g^1_\mu$ is the law of the squared Bessel process starting from zero $G_\mu(t)$,
\item $e_\mu = g^{-1}_\mu$ is the law of $E_\mu(t)$ which is the (reciprocal of ) inverse to $G_\mu(t)$,
\item $H^{m,n}_{p,q}$ is the Fox's function,
\item $W^\alpha_{+}$ and $W^\alpha_{-}$ are the left and right Weyl's derivatives,
\item $\frac{\partial^\alpha}{\partial x^\alpha}$ and $\frac{\partial^\alpha}{\partial (-x)^\alpha}$ are the left and right Riemann-Liouville derivatives,
\item ${^C D_{t}^\alpha}$ is the Dzhrbashyan-Caputo fractional derivative.
\end{itemize}

We now introduce fractional derivatives and recall their connection with stable densities. The $\alpha$-stable process $S^{\alpha, \theta}_t$, $t>0$, with law $s^{\theta}_{\nu}=s^{\theta}_{\nu}(x,t)$, $x \in \mathbb{R}$, $t>0$ has characteristic function
\begin{equation}
E \exp\left( i \,\beta\, S^{\alpha, \theta}_t \right) = \exp\Bigg[ -t |\xi |^\alpha \Big[ 1- i \theta \frac{\xi}{|\xi |} \tan\left( \frac{\alpha \pi}{2} \right) \Big] \Bigg],\quad \xi \in \mathbb{R}
\label{tauFourierT}
\end{equation}
with $\alpha \in (0,1) \cup (1,2]$ and $\theta \in [-1, 1]$ (see \citet{Zol86}). If $\theta=0$, then we have a symmetric process  with $E \exp\left( i \beta S^{\alpha}_t \right)  = \exp\left( -t|\beta |^\alpha  \right)$, $\alpha \in (0, 2]$. For the sake of simplicity we will write $S^{\alpha}_t$ instead of $S^{\alpha, 0}_t$. Moreover, we will refer to $\mathfrak{H}^{\nu}_t = S^{\nu, 1}_t$, $t>0$ as the totally (positively) skewed process which is also named stable subordinator. For $n-1 < \alpha < n$, $n \in \mathbb{N}$, according to \citet{KST06, SKM93}, we define
\begin{equation}
(W^{\alpha}_{-}f)(x) =  \frac{(-1)^n}{\Gamma(n-\alpha)} \frac{d^n}{dx^n} \int_x^{\infty} (s - x)^{n-\alpha -1} f(s) \, ds, \quad x \in \mathbb{R} \label{Weyll1}
\end{equation}
and
\begin{equation}
(W^{\alpha}_{+}f)(x) = \frac{1}{\Gamma(n-\alpha)} \frac{d^n}{dx^n} \int_{-\infty}^x (x -s)^{n-\alpha -1} f(s) \, ds, \quad x \in \mathbb{R} \label{Weyll2}
\end{equation}
which are the right and left Weyl's derivatives by means of which we write the governing equation of $S^{\alpha, \theta}_t$, $t>0$, given by
\begin{equation}
\frac{\partial s_{\alpha}}{\partial t}(x,t) =\, _\theta D^{\alpha}_{|x|} s_{\alpha}(x,t), \quad x \in \mathbb{R}, \; t>0 \label{RieszEq}
\end{equation}
where 
\begin{equation}
{_\theta D^{\alpha}_{|x|}} =  \frac{1}{2 \cos \alpha\pi/2} \Big[ \kappa \, W^{\alpha}_{-} + (1-\kappa) \, W^{\alpha}_{+} \Big]  \label{weylrap}
\end{equation}
and $0 \leq \kappa=\kappa(\theta) \leq 1$ (see e.g. \citet{BWM00, Ch98, MLP01, OZXUE03}). For $\theta=0$ (that is $\kappa= \kappa(0)=1/2$) we obtain the Riesz derivative
\begin{equation}
{ _0 D^{\alpha}_{|x|}} = \frac{\partial^\alpha }{\partial | x |^\alpha} \label{RieszOp}
\end{equation}
which is the governing operator of the symmetric process $S^{\alpha}_t$, $t>0$.  The Riemann-Liouville derivatives 
\begin{equation} 
\frac{d^\alpha f}{d (-x)^\alpha}(x) = (W^\alpha_{-} f)(x), \quad x \in \mathbb{R}_{+} \label{Rfracderr}
\end{equation}
and 
\begin{equation}
\frac{d^\alpha f}{d x^\alpha}= \frac{1}{\Gamma\left( n-\alpha \right)} \frac{d^n}{d x^n} \int_0^x (x-s)^{n-\alpha -1} f(s) \, ds, \quad x \in \mathbb{R}_{+} \label{Rfracder}
\end{equation}
are defined by restricting the Weyl's derivatives \eqref{Weyll1} and \eqref{Weyll2} to $(0, +\infty)$. For $\alpha \in \mathbb{N}$, the fractional derivatives above become ordinary derivatives  and 
\begin{equation}
\frac{d^\alpha}{d x^\alpha} = (-1)^\alpha \frac{d^\alpha}{d (-x)^\alpha}.\label{dernu1}
\end{equation}
We also introduce the Dzhrbashyan-Caputo fractional derivative 
\begin{equation}
 {^cD^{\alpha}_{t}} f(t) = \frac{1}{\Gamma\left( n-\alpha \right)} \int_0^t (t-s)^{n-\alpha -1} \frac{d^n f}{d s^n}(s) \, ds \label{Cfracder}
\end{equation}
defined for $n-1 < \alpha < n$, $n \in \mathbb{N}$, which is related to \eqref{Rfracder} as follows (see \citet{GM97} and \citet{KST06})
\begin{equation}
 {^cD^{\alpha}_{t}} f(t) = \frac{d^\alpha f}{d t^\alpha}(t) - \sum_{k=0}^{n-1} \frac{d^k f}{d t^k} (t) \Bigg|_{t=0^+} \frac{t^{k - \alpha}}{\Gamma(k - \alpha +1)}. \label{RCfracder}
\end{equation}

From the relation 
\begin{equation} 
Pr\{\mathfrak{L}^{\nu}_t <x\}= Pr\{\mathfrak{H}^{\nu}_x > t\}, \label{relHL}
\end{equation}
according to \cite{BM01, Dov2, MS08, BMN09ann}, we define the inverse process $\mathfrak{L}^{\nu}_t$, $t>0$ with law $l_{\nu}=l_{\nu}(x,t)$, $x,t>0$. As already mentioned before, $\mathfrak{H}^{\nu}_t$, $t>0$ is the $\nu$-stable subordinator, $\nu \in (0,1)$ with law, say, $h_{\nu}=h_{\nu}(x,t)$, $x,t>0$. The process $\mathfrak{H}^{\nu}_t$, $t>0$ is a process with non-negative, independent and homogeneous increments (see \citet{Btoi96}) whereas, the inverse process $\mathfrak{L}^{\nu}_t$, $t>0$ has non-negative, non-stationary and non-independent increments (see \citet{MSheff04}). Stable subordinators and their inverse processes are characterized by the following Laplace transforms:
\begin{equation}
E\exp\left( -\lambda \mathfrak{H}^{\nu}_t\right) = \exp\left( -t \lambda^{\nu}\right), \qquad E\exp\left( -\lambda \mathfrak{L}^{\nu}_t\right) = E_{\nu}(- \lambda t^\nu) \label{LapSpace}
\end{equation}
and
\begin{equation}
\mathcal{L}[h_{\nu}(x,\cdot)](\lambda) = x^{\nu-1}E_{\nu, \nu}(-\lambda x^{\nu}), \qquad \mathcal{L}[l_{\nu}(x, \cdot)](\lambda) = \lambda^{\nu-1}\exp\left( -x \lambda^{\nu} \right) \label{LapTime}
\end{equation}
where the entire function
\begin{equation}
E_{\alpha, \beta}(z) = \sum_{k \geq 0} \frac{z^k}{\Gamma(\alpha k +\beta)} , \quad z \in \mathbb{C}, \; \Re\{ \alpha \} >0, \; \beta \in \mathbb{C}  \label{GMLeffler}
\end{equation}
is the generalized Mittag-Leffler function for which
\begin{equation*}
\int_0^\infty e^{-\lambda z} z^{\beta -1} E_{\alpha, \beta}( - \mathfrak{c} z^\alpha) \, dz = \frac{\lambda^{\alpha -\beta}}{\lambda^{\alpha} + \mathfrak{c}}, \quad \Re\{ \lambda \} > |\mathfrak{c} |^{1/\alpha}, \; \Re\{\mathfrak{c} \} >0
\end{equation*}
and $E_{\alpha}(z) = E_{\alpha, 1}(z)$ is the Mittag-Leffler function. From \eqref{LapSpace} we immediately verify that the law $h_\nu$ satisfies the fractional equation $-\frac{\partial}{\partial t} h_{\nu}(x,t) = \frac{\partial^\nu}{\partial x^\nu} h_{\nu}(x, t)$ whereas, for the law of $\mathfrak{L}^{\nu}_t$, from \eqref{LapTime} we have that $\frac{\partial^\nu}{\partial t^\nu} l_{\nu}(x,t) = - \frac{\partial}{\partial x} l_{\nu}(x,t)$. Such density laws can not be represented in closed form. In this paper we write 
\begin{equation}
h_{\nu}(x,t) = \frac{1}{\nu t^{1/\nu}}H^{0,1}_{1,1}\left[ \frac{x}{t^{1/\nu}} \Bigg| \begin{array}{c} (1-\frac{1}{\nu}, \frac{1}{\nu})\\ (0,1) \end{array} \right], \;\; l_{\nu}(x,t) = \frac{1}{t^\nu}H^{1,0}_{1,1}\left[ \frac{x}{t^\nu} \Bigg| \begin{array}{c} (1-\nu, \nu)\\ (0,1) \end{array} \right] \label{hlHfox}
\end{equation}
for $x,t>0$, $\nu \in (0,1)$ in terms of H-functions as can be obtained by considering \eqref{mellinHfox} and the Mellin transforms (see \cite{Dov2, MLP01, MPG03})
\begin{align}
\mathcal{M}[h_{\nu}(\cdot, t)](\eta) = & \Gamma\left( \frac{1-\eta}{\nu} \right) \frac{t^\frac{\eta -1}{\nu}}{\nu \,\Gamma\left(1-\eta \right)}, \quad \mathcal{M}[l_{\nu}(\cdot, t)](\eta) = \frac{\Gamma\left(\eta \right)\, t^{\nu (\eta-1)}}{\Gamma\left( \eta \nu - \nu +1\right)}. \label{melTUTTE}
\end{align}
Further representations of $h_\nu$ and $l_\nu$ are given in terms of the Wright function 
\begin{equation*}
W_{\alpha, \beta}(z) = \sum_{k \geq 0} \frac{z^k}{k!\, \Gamma(\alpha k + \beta)}, \quad z \in \mathbb{C}, \; \Re\{ \alpha \} > -1,\; \beta \in \mathbb{C}
\end{equation*}
by considering that
\begin{equation*}
l_{\nu}(x, t) = \frac{1}{t^\nu} W_{-\nu, 1-\nu} \left(-\frac{x}{t^{\nu}} \right)
\end{equation*} 
and (see \citet{Dov4}) $xh_\nu(x, t) = tl_\nu(t, x)$.

\section{Fractional and Anomalous diffusions}
\label{sezFAD}
Standard diffusion has the mean squared displacement (or time-dependent variance) which is linear in time. Anomalous diffusion is usually met in disordered or fractal media (see e.g. \citet{GioRom92}) and represents a phenomenon for which the mean squared displacement is no longer linear but proportional to a power $\alpha$ of time with $\alpha \neq 1$. Thus we have superdiffusion ($\alpha>1$) or subdiffusion ($\alpha<1$) in which diffusion occurs faster or slower than normal diffusion (see e.g. \citet{Uch03}). 

\subsection{Fractional evolution equations}
We begin our analysis by studying anomalous diffusions on $\Omega_a=(0, a)$, $a>0$, whose governing equations involve the operator
\begin{align}
\mathcal{G}^{*} = & \frac{1}{\gamma^2} \frac{\partial}{\partial x} x^{\gamma \mu -\gamma + 1} \frac{\partial}{\partial x}  \frac{1}{\mathfrak{w}(x)} \label{OrdOpG}
\end{align}
($\mathfrak{w} (x) = x^{\gamma \mu -1}$ will play the role of weight function further on) for $\gamma=\pm 1$ and $\mu>0$.

The solutions to the ordinary problem
\[ \frac{\partial u}{\partial t} = \mathcal{G}^{*}\, u \]
subject to the initial data $u_0=f$ can be written as $u(x,t) = T_1(t)f(x)$ where $T_1(t) = \exp -t \mathcal{G}^{*}$. For the subordination principle (see for example \citet{Bazh00, Boch49}) we can write the solutions to the fractional problem
\[ \frac{\partial^\nu u}{\partial t^\nu}= \mathcal{G}^{*}\, u, \quad \nu \in (0,1] \]
($\frac{\partial^\nu u}{\partial t^\nu}$ is defined in \eqref{Rfracder}) subject to the initial condition $u_0=f$ by considering the convolution operator
\[ T_\nu(t) = \frac{1}{t^{\nu}} \int_0^\infty ds\,  H^{1,0}_{1,1}\left[ \frac{s}{t^\nu} \Bigg| \begin{array}{c} (1-\nu, \nu)\\ (0,1) \end{array} \right]  e^{-s \mathcal{G}^{*}} \]
where the Fox's function $H^{1,0}_{1,1}$ is the law of the inverse process $\mathfrak{L}^\nu_t$ introduced in the previous section. In order to explicitly write the convolution $T_\nu(t)f(x)$, for $\gamma \neq 0$, $\mu >0$, we introduce the functions
\begin{equation}
g^{\gamma}_{\mu}(x,t) = \textrm{sign}(\gamma) \frac{1}{t}Q^{\gamma}_{\mu}\left( \frac{x}{t} \right) \quad \textrm{and} \quad \tilde{g}^{\gamma}_{\mu}(x,t) = g^{\gamma}_{\mu}(x, t^{1/\gamma}) \label{differentg}
\end{equation}
where
\begin{equation*}
Q^{\gamma}_{\mu}(z) = \gamma \frac{z^{\gamma \mu -1}}{\Gamma(\mu)} e^{- z^{\gamma}}, \quad z>0,\,\gamma > 0,\, \mu >0
\end{equation*}
is the well known generalized gamma density or Weibull distribution if $\gamma \in \mathbb{N}$.  

\begin{te}
For $\gamma \neq 0$, $\mu >0$, $\nu \in (0,1]$, the solution to the fractional p.d.e.
\begin{equation}
\left\lbrace \begin{array}{ll}
\frac{\partial^\nu \tilde{u}^{\gamma, \mu}_{\nu}}{\partial t^\nu}  = \mathcal{G}^{*}\, \tilde{u}^{\gamma, \mu}_{\nu}, & x \in \Omega_{\infty}, \, t>0\\ 
\tilde{u}^{\gamma, \mu}_{\nu}(x,0)=\delta(x)
\end{array} \right .
\label{tmpDG}
\end{equation}
is given by
\begin{equation}
\tilde{u}^{\gamma, \mu}_{\nu} (x,t) = \int_0^\infty \tilde{g}^{\gamma}_{\mu}(x, s)\, l_{\nu}(s,t) \, ds \label{tmpDGlaw}
\end{equation}
where $\tilde{g}^\gamma_\mu$ and $l_\nu$ are defined in \eqref{differentg} and \eqref{hlHfox} respectively. 
\label{dfjlY}
\end{te}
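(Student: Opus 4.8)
The plan is to reduce \eqref{tmpDG} to the non-fractional Cauchy problem $\partial_t v=\mathcal G^{*}v$ via the inverse--stable subordination, so the first task is to identify $\tilde g^\gamma_\mu$ as the fundamental solution of that problem. I would check directly that, for every $\gamma\neq 0$, $\mu>0$,
\begin{equation*}
\frac{\partial}{\partial s}\tilde g^\gamma_\mu(x,s)=\mathcal G^{*}_x\,\tilde g^\gamma_\mu(x,s),\qquad s>0,\ x\in\Omega_\infty,
\end{equation*}
by substituting $\tilde g^\gamma_\mu(x,s)=\textrm{sign}(\gamma)\,s^{-1/\gamma}Q^\gamma_\mu\big(x s^{-1/\gamma}\big)$ from \eqref{differentg} into \eqref{OrdOpG}. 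Writing $z=xs^{-1/\gamma}$ and $Q^\gamma_\mu(z)=\gamma z^{\gamma\mu-1}e^{-z^\gamma}/\Gamma(\mu)$, the weight $\mathfrak w(x)=x^{\gamma\mu-1}$ inside $\mathcal G^{*}$ cancels the power $x^{\gamma\mu-1}$ carried by $\tilde g^\gamma_\mu$, so that $x^{1-\gamma\mu}\tilde g^\gamma_\mu$ is, up to the $s$--prefactor, just $e^{-x^\gamma/s}$; performing the two $x$--differentiations in $\mathcal G^{*}$ and, separately, the single $s$--differentiation produces in both cases the same linear combination of $e^{-x^\gamma/s}$ and $(x^\gamma/s)\,e^{-x^\gamma/s}$, which gives the identity. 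The exponent $1/\gamma$ in \eqref{differentg} is exactly what makes this close for negative $\gamma$; equivalently, for $\gamma=+1$ the function $\tilde g^1_\mu=g_\mu$ is the transition density of the squared Bessel process whose forward operator is $\mathcal G^{*}$, while for $\gamma=-1$ the time change $s\mapsto s^{1/\gamma}=1/s$ intertwines the law $g^{-1}_\mu$ of $E_\mu$ with the same $\mathcal G^{*}$.

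Granting this, I would write the candidate \eqref{tmpDGlaw} as $\tilde u^{\gamma,\mu}_\nu(x,t)=\int_0^\infty\tilde g^\gamma_\mu(x,s)\,l_\nu(s,t)\,ds$ and apply $\partial^\nu_t$ (as in \eqref{Rfracder}) under the integral. Using the equation $\partial^\nu_t l_\nu(s,t)=-\partial_s l_\nu(s,t)$ for the inverse--stable density recorded after \eqref{melTUTTE} — which follows from \eqref{LapTime}, the boundary term $[I^{1-\nu}_t l_\nu(s,t)]_{t=0^+}$ vanishing for $s>0$, so that the Riemann--Liouville and Caputo forms coincide here — one obtains
\begin{equation*}
\frac{\partial^\nu}{\partial t^\nu}\tilde u^{\gamma,\mu}_\nu(x,t)=-\int_0^\infty\tilde g^\gamma_\mu(x,s)\,\frac{\partial}{\partial s}l_\nu(s,t)\,ds=\Big[-\tilde g^\gamma_\mu(x,s)\,l_\nu(s,t)\Big]_{s=0}^{s=\infty}+\int_0^\infty\Big(\frac{\partial}{\partial s}\tilde g^\gamma_\mu(x,s)\Big)l_\nu(s,t)\,ds.
\end{equation*}
Substituting $\partial_s\tilde g^\gamma_\mu=\mathcal G^{*}_x\tilde g^\gamma_\mu$ from the first step and pulling $\mathcal G^{*}_x$ outside the $s$--integral turns the last term into $\mathcal G^{*}\tilde u^{\gamma,\mu}_\nu(x,t)$, so the equation in \eqref{tmpDG} holds once the boundary term is shown to vanish.

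The boundary term is zero because, for fixed $t>0$, $l_\nu(\cdot,t)$ is a bounded density — by \eqref{LapTime}, $l_\nu(0^+,t)=t^{-\nu}/\Gamma(1-\nu)<\infty$ — whereas $\tilde g^\gamma_\mu(x,s)\to0$ both as $s\to0^+$ and as $s\to\infty$ (towards the endpoint where $x^\gamma/s\to+\infty$ the exponential $e^{-x^\gamma/s}$ decides, and towards the other endpoint the power $s^{-1/\gamma}$ does). The initial datum comes from $l_\nu(s,0)=\delta(s)$, valid since $\mathfrak L^\nu_0=0$, giving $\tilde u^{\gamma,\mu}_\nu(x,0)=\tilde g^\gamma_\mu(x,0^+)$, which equals $\delta(x)$ for $\gamma>0$ because $g^\gamma_\mu(\cdot,r)$ is a generalized gamma density with scale $r\to0^+$. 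An alternative route, bypassing the interchanges, is to work with Laplace transforms in $t$: by \eqref{LapTime}, $\mathcal L[\tilde u^{\gamma,\mu}_\nu(x,\cdot)](\lambda)=\lambda^{\nu-1}\,\widehat{\tilde g^\gamma_\mu}(x,\lambda^\nu)$ where $\widehat{\tilde g^\gamma_\mu}(x,\cdot)=(\,\cdot\,I-\mathcal G^{*})^{-1}\delta$ by the first step, and this coincides with the transform of \eqref{tmpDG}. The hard part will be the analytic justification of the second step — differentiation under the integral, the integration by parts, and moving $\mathcal G^{*}_x$ through the $s$--integral — which requires decay/integrability bounds on $\tilde g^\gamma_\mu$, $\partial_s\tilde g^\gamma_\mu$ and $\mathcal G^{*}_x\tilde g^\gamma_\mu$ against $l_\nu$ and $\partial^\nu_t l_\nu$ that are uniform for $t$ in compact sets; a further subtlety, when $\gamma<0$, is the sense in which $\tilde g^\gamma_\mu(x,0^+)=\delta(x)$ is to be read, since the pre--time--change density $g^{-1}_\mu(x,s)$ spreads its mass towards $+\infty$ as $s\to\infty$.
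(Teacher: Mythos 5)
Your proposal is correct in substance but takes a genuinely different route from the paper's. The paper never verifies $\partial_s\tilde g^{\gamma}_{\mu}=\mathcal G^{*}\tilde g^{\gamma}_{\mu}$ by direct differentiation: for $\nu=1$ it computes the Mellin transform $\mathcal M[\tilde g^{\gamma}_{\mu}(\cdot,t)](\eta)$, differentiates in $t$, and identifies the result as the Mellin transform of $\mathcal G^{*}\tilde g^{\gamma}_{\mu}$ through \eqref{derMint}, \eqref{propM1}, \eqref{propM2}; for $\nu\in(0,1)$ it works entirely in Laplace space, writing $\mathcal L[\tilde u^{\gamma,\mu}_{\nu}(x,\cdot)](\lambda)$ in closed form (via formula 3.478 of Gradshteyn--Ryzhik) as a multiple of $\mathfrak w(x)\,\psi(x;2\lambda^{\nu/2})$, where $\psi_{\kappa}$ is the Macdonald-function eigenfunction of Lemma \ref{lmG}, so that $\mathcal L[\mathcal G^{*}\tilde u^{\gamma,\mu}_{\nu}(x,\cdot)](\lambda)=\lambda^{\nu}\mathcal L[\tilde u^{\gamma,\mu}_{\nu}(x,\cdot)](\lambda)$ follows from $\mathcal G\psi_{\kappa}=(\kappa/2)^{2}\psi_{\kappa}$ and $\mathcal G^{*}\mathfrak w f=\mathfrak w\,\mathcal G f$, and the conclusion comes from comparing with $\mathcal L[\partial^{\nu}_{t}\tilde u^{\gamma,\mu}_{\nu}(x,\cdot)](\lambda)=\lambda^{\nu}\mathcal L[\tilde u^{\gamma,\mu}_{\nu}(x,\cdot)](\lambda)$. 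That transform argument is essentially the ``alternative route'' you mention only in passing at the end. Your main argument --- transfer $\partial^{\nu}_{t}$ onto $l_{\nu}$ via $\partial^{\nu}_{t}l_{\nu}(s,t)=-\partial_{s}l_{\nu}(s,t)$, integrate by parts in $s$, and invoke the forward equation for $\tilde g^{\gamma}_{\mu}$ --- is the generic subordination proof: it is more transparent, needs neither the Bessel-$K$ Laplace transform nor Lemma \ref{lmG}, and would apply verbatim to any generator once the $\nu=1$ case is established; the price is that you must justify the interchanges and boundary terms yourself, which the paper sidesteps by working with transforms and uniqueness. Your computations do check out against the explicit form $\tilde g^{\gamma}_{\mu}(x,s)=\frac{|\gamma|}{\Gamma(\mu)}\,x^{\gamma\mu-1}s^{-\mu}e^{-x^{\gamma}/s}$ (both the identity $\partial_{s}\tilde g^{\gamma}_{\mu}=\mathcal G^{*}\tilde g^{\gamma}_{\mu}$ and the vanishing of the boundary term), and the $\gamma<0$ initial-condition subtlety you flag is real --- for $\gamma=-1$ the mass of $\tilde g^{-1}_{\mu}(\cdot,s)$ drifts to $+\infty$ as $s\to0^{+}$ --- but the paper's proof does not address the initial datum either, so you are not missing a step that the paper supplies.
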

From the fact that 
$$\lim_{\nu \to 1} \mathfrak{L}^\nu_t \stackrel{a.s}{=} t$$ 
where $t$ is the elementary subordinator (see \citet{Btoi96}) we can write
\[ \lim_{\nu \to 1} l_\nu(s,t) = \delta(s-t) \]
and thus $\tilde{u}^{\gamma, \mu}_{1} = \tilde{g}^\gamma_\mu$ are the solutions to \eqref{tmpDG} for $\nu=1$ and $\gamma \neq 0$, $\mu>0$. In this case the time-fractional derivative becomes the ordinary derivative $\partial /\partial t$. For $\gamma=\pm 1$ and $\mu >0$, we obtain that
\[ \tilde{g}^{1}_\mu(x,t) = E^x \delta(G_\mu(t)), \]
\[ \tilde{g}^{-1}_\mu(x,t) = E^x \delta(E_\mu(t)),\]
\[ \tilde{u}^{1,\mu}_{\nu}(x,t) = E^x \delta(G_\mu(L^\nu_t)), \]
\[ \tilde{u}^{-1,\mu}_{\nu}(x,t) = E^x \delta(E_\mu(L^\nu_t)) \]
where $E^x \delta(X_t) = \delta * f_{X_t}(x)$. The process $G_\mu$ is a non-negative diffusion satisfying the stochastic differential equation 
\begin{equation}
dG_\mu(t) = \mu \, dt + 2 \sqrt{G_\mu(t)}\, dB_1(t) \label{SDEBes}
\end{equation}
where $B_1(t)$, $t>0$ is a Brownian motion with variance $t/2$. The reciprocal gamma law $e_{\mu}=g^{-1}_{\mu}$, $\mu>0$, represents the $1$-dimensional marginal law of the process satisfying the stochastic equation
\begin{equation}
dE_{\mu}(t)= - \left( E_{ \mu}(t) - \frac{1}{\mu -1} \right) \,dt + \sqrt{\frac{2\, |E_{\mu}(t)|^2}{\mu -1}}\, dB_2(t) \label{SPDEdiE}
\end{equation}
where $B_2(t)$, $t>0$ is a standard Brownian motion (see e.g. \citet{BSS05}, \citet{Pes06}). Due to the global Lipschitz condition on both coefficients, the stochastic equation \eqref{SPDEdiE} has a unique solution which is a strong Markov process. The process $E_\mu$ also appears by considering the integral of a geometric Brownian motion with drift $\mu$, that is 
\begin{equation}
\frac{1}{2} E_\mu \stackrel{law}{=} \int_0^\infty \exp\left( 2B(s) - 2\mu s\right) ds
\end{equation}
see \citet{Dufresne90, PolSieg85}. For $\gamma=2$, the operator $\mathcal{G}^{*}$ becomes the governing operator of a $2\mu$-dimensional Bessel process $R_{2\mu} = G_{2\mu}^{1/2}$. 

For the processes $G_\mu$ and $E_\mu$ introduced above there exist a couple of interesting properties. In particular, we have that
\begin{equation*}
Pr\{ G_\mu(x) > t\} = Pr\{ E_\mu(t) < x\}
\end{equation*}
and therefore we refer to $E_\mu$ as the inverse to $G_\mu$, whereas from the fact that
\begin{equation*}
E_\mu(t) \stackrel{law}{=} 1/G_\mu(t)
\end{equation*}
the process $E_\mu$ will be also termed reciprocal process of $G_\mu$.

\begin{os}
\normalfont
We observe that $\mathfrak{H}^\nu_t$ has non-negative increments and therefore, from \eqref{relHL}, the inverse process $\mathfrak{L}^\nu_t$ can be regarded as an Hitting time. This does not hold for $E_\mu$ being $G_\mu$ a  diffusion driven by \eqref{SDEBes}.
\end{os}

The operator $\mathcal{G}^{*}$ is the adjoint of the infinitesimal generator
\begin{align}
\mathcal{G} = & \frac{1}{\gamma^2\, \mathfrak{w}(x)} \frac{\partial}{\partial x} x^{\gamma\mu -\gamma +1} \frac{\partial}{\partial x} , \quad \gamma=\pm 1, \, \mu >0\label{opGSL}
\end{align}
which is a second order differential operator driving the squared Bessel process $G_\mu$ if $\gamma=+1$ and the inverse process $E_\mu$ if $\gamma=-1$. The Sturm-Liouville eigenvalue problem (see \citet{Boc29, CouHilb}) associated with \eqref{opGSL} leads to the differential equation  (see Lemma \ref{lmG} below)
\begin{equation}
\mathcal{G} \, \bar{\psi}_{\kappa_i} = - (\kappa_i /2)^2 \, \bar{\psi}_{\kappa_i}.
\end{equation}
The eigenfunctions
\begin{equation}
\bar{\psi}_{\kappa_i}(x) = x^{\frac{\gamma}{2}(1-\mu)} J_{\mu -1}\left( \kappa_i \, x^{\gamma/2}\right) \label{eigFB}
\end{equation}
corresponding to different eigenvalues are orthogonal with respect to the weight function $\mathfrak{w}(x) = x^{\gamma \mu -1}$ in the sense that
\begin{equation}
\int_{\Omega_a}  \bar{\psi}_{\kappa_i}\left(\frac{x}{a}\right)  \bar{\psi}_{\kappa_j}\left(\frac{x}{a}\right)  \mathfrak{w}(x) dx = 0, \quad \textrm{if }\; i \neq j \label{orthoSL}
\end{equation}
on the domain $\Omega_{a}=(0, a)$, $a>0$. The eigenvalues are written in terms of $\{\kappa_i \}_{i \in \mathbb{N}}$ which are the zeros of the Bessel function of the first kind (see \cite[p. 102]{LE})
\begin{equation}
J_{\alpha}(z) = \sum_{k \geq 0} \frac{(-1)^k \, (z/2)^{\alpha + 2k}}{k!\, \Gamma(\alpha + k+1)}, \quad |z| < \infty, \quad |\textrm{arg} z| < \pi .\label{BesJ}
\end{equation}
The set of eigenfunctions $\{\bar{\psi}_{\kappa_i}\}_{i \in \mathbb{N}}$ is complete and therefore a piecewise smooth function can be represented by a generalized Fourier series expansion of \eqref{eigFB}. In particular, on the finite domain $\Omega_{1} = (0,1)$, we study the solution to
\begin{equation}
\left\lbrace \begin{array}{ll} 
{^cD^\nu_t} m^{\gamma, \mu}_{\nu} = \mathcal{G}^{*}\, m^{\gamma, \mu}_{\nu}, & x \in \Omega_{1},\, t>0,\\ m^{\gamma, \mu}_{\nu}(x,0)=m_0(x), & m_0 \in C(\Omega_{1})\\ 
m^{\gamma,\mu}_{\nu}(x,t)=0, & x \in \partial \Omega_{1}, \; t>0,
\end{array} \right .
\label{probBoundG} 
\end{equation}
with $\nu \in (0,1]$, $\gamma \neq 0$, $\mu >0$ and arrive at the next result. 
\begin{te}
The solution to the problem \eqref{probBoundG} can be written as follows
\begin{equation}
m^{\gamma, \mu}_{\nu}(x,t) = \mathfrak{w}(x) \, \sum_{n=1}^{\infty} c_n \, E_{\nu}\left(- (\kappa_n /2)^2\, t^{\nu}\right) \, \frac{\bar{\psi}_{\kappa_n}(x)}{\| \bar{\psi}_{\kappa_n} \|^2_{\mathfrak{w}}} 
\end{equation}
where $\mathfrak{w}(x)=x^{\gamma \mu -1}$ is the weight function,
\begin{equation}
c_n = \int_{\Omega_1} m_0(x) \bar{\psi}_{\kappa_n}(x) \, dx, \quad n =1,2, \ldots 
\end{equation}
and $E_{\nu}(z) = E_{\nu, 1}(z)$ is the Mittag-Leffler function \eqref{GMLeffler}.
\end{te}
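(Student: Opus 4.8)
The plan is to solve \eqref{probBoundG} by separation of variables, transferring the spectral information of the operator $\mathcal{G}$ (self-adjoint with respect to the weight $\mathfrak{w}$) to its adjoint $\mathcal{G}^{*}$. The starting observation is the elementary intertwining identity
\begin{equation*}
\mathcal{G}^{*}\big( \mathfrak{w}(x)\, f(x) \big) = \mathfrak{w}(x)\, \big( \mathcal{G} f \big)(x),
\end{equation*}
immediate from \eqref{OrdOpG}--\eqref{opGSL} since $\mathcal{G}^{*}$ and $\mathcal{G}$ carry the factor $1/\mathfrak{w}$ on opposite sides of the differential block $\gamma^{-2}\partial_x x^{\gamma\mu-\gamma+1}\partial_x$. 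Combining this with Lemma \ref{lmG} shows that $\phi_n(x):=\mathfrak{w}(x)\,\bar{\psi}_{\kappa_n}(x)$ are eigenfunctions of $\mathcal{G}^{*}$ for the same eigenvalues $-(\kappa_n/2)^2$; moreover each $\phi_n$ obeys the homogeneous boundary conditions of \eqref{probBoundG}, since $\bar{\psi}_{\kappa_n}(1)=J_{\mu-1}(\kappa_n)=0$ by definition of the $\kappa_n$, while at $x=0$ the behaviour of $J_{\mu-1}$ near the origin gives the boundary condition inherited from the Sturm--Liouville problem.

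I would then look for $m^{\gamma,\mu}_{\nu}(x,t)=\sum_{n\geq1}T_n(t)\,\phi_n(x)$. Inserting this into \eqref{probBoundG} and using the eigenrelation decouples the system into the scalar fractional Cauchy problems ${^cD^{\nu}_{t}}T_n=-(\kappa_n/2)^2\,T_n$ with $T_n(0)=T_{n,0}$. Applying the Laplace transform, with $\mathcal{L}[{^cD^{\nu}_{t}}T_n](\lambda)=\lambda^{\nu}\widehat{T_n}(\lambda)-\lambda^{\nu-1}T_n(0)$ (a consequence of \eqref{RCfracder}), and comparing with the Laplace transform $\mathcal{L}[E_{\nu}(-\mathfrak{c}t^{\nu})](\lambda)=\lambda^{\nu-1}/(\lambda^{\nu}+\mathfrak{c})$ recorded after \eqref{GMLeffler}, one gets $T_n(t)=T_{n,0}\,E_{\nu}\!\big(-(\kappa_n/2)^2t^{\nu}\big)$. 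The constants $T_{n,0}$ are pinned down by the initial condition: from $\sum_n T_{n,0}\phi_n=m_0$, multiplying by $\bar{\psi}_{\kappa_j}$, integrating over $\Omega_1$ and using the orthogonality \eqref{orthoSL}, one obtains $T_{n,0}=c_n/\|\bar{\psi}_{\kappa_n}\|^2_{\mathfrak{w}}$ with $c_n=\int_{\Omega_1}m_0(x)\bar{\psi}_{\kappa_n}(x)\,dx$. Completeness of $\{\bar{\psi}_{\kappa_n}\}$ in the weighted $L^2$ space (\citet{Boc29,CouHilb}) legitimates the expansion of the continuous datum $m_0$, and assembling the pieces gives precisely the claimed series.

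It remains to verify that the formal series is an actual solution, i.e.\ that it may be differentiated term by term in $t$ in the Dzhrbashyan--Caputo sense and acted on term by term by $\mathcal{G}^{*}$, and that $m^{\gamma,\mu}_{\nu}(\cdot,t)\to m_0$ as $t\to0^{+}$. This rests on the Weyl asymptotics $\kappa_n\sim\pi n$ for the Bessel zeros, a uniform lower bound for $\|\bar{\psi}_{\kappa_n}\|_{\mathfrak{w}}$, boundedness of $c_n$ (with extra decay if $m_0$ is smoother), and the crucial algebraic decay $E_{\nu}\!\big(-(\kappa_n/2)^2t^{\nu}\big)=O(n^{-2}t^{-\nu})$ for each fixed $t>0$, which dominates the polynomial growth of the eigenvalues and of $\|\phi_n\|_{\infty}$. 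This convergence and regularity bookkeeping --- in particular controlling the mild singularities of $\phi_n$ at $x=0$ and of the time derivatives near $t=0^{+}$, where the Caputo operator interacts with the $t^{k-\nu}$ correction term in \eqref{RCfracder} --- is the only genuine obstacle; the algebraic core is short. As a consistency check, the same expression is reproduced by the subordination formula $m^{\gamma,\mu}_{\nu}(x,t)=\int_0^{\infty}m^{\gamma,\mu}_{1}(x,s)\,l_{\nu}(s,t)\,ds$ with $m^{\gamma,\mu}_{1}$ the $\nu=1$ solution, since $\int_0^{\infty}e^{-(\kappa_n/2)^2 s}\,l_{\nu}(s,t)\,ds=E\exp\!\big(-(\kappa_n/2)^2\mathfrak{L}^{\nu}_t\big)=E_{\nu}\!\big(-(\kappa_n/2)^2t^{\nu}\big)$ by \eqref{LapSpace}.
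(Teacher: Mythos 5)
Your proposal is correct and follows essentially the same route as the paper: the intertwining identity $\mathcal{G}^{*}(\mathfrak{w}f)=\mathfrak{w}\,\mathcal{G}f$ reducing the problem to the Sturm--Liouville eigenfunctions $\bar{\psi}_{\kappa_n}$ of Lemma \ref{lmG}, term-by-term decoupling into the scalar fractional equation ${^cD^{\nu}_{t}}T_n=-(\kappa_n/2)^2T_n$ solved by the Mittag-Leffler function, and recovery of the coefficients from the initial datum via the weighted orthogonality \eqref{innerprod}. Your extra remarks on convergence bookkeeping and the subordination consistency check go beyond what the paper records, but the algebraic core is identical.
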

This result can be easily extended to the case in which the bounded domain is $\Omega_{a}=(0, a)$ for $a>0$ and we can write
\begin{equation}
m^{1,\mu}_{\nu}(x,t) = E^{x}\, m_0(G_\mu(\mathfrak{L}^{\nu}_t))\, \mathbf{1}_{(\mathfrak{L}^{\nu}_t < T_{\Omega_{a}}(G_\mu))}
\end{equation}
and
\begin{equation}
m^{-1,\mu}_{\nu}(x,t) = E^{x}\, m_0(E_\mu(\mathfrak{L}^{\nu}_t))\, \mathbf{1}_{(\mathfrak{L}^{\nu}_t < T_{\Omega_{a}}(E_\mu))}
\end{equation}
where $T_{D}(X) = \inf\{t \geq 0:\, X_t \notin D \}$ is an exit time and $E^x \phi(X_t) = \phi * f_{X_t}(x)$. We observe that $\mathbf{1}_{(L^{\nu}_{t} < T_{\Omega_{a}}(X))}=\mathbf{1}_{(t < T_{\Omega_{a}}(X(L^{\nu})))}$. For $a \to \infty$ we obtain the solutions to the problem \eqref{tmpDG}. Indeed, we have that $\mathbf{1}_{(L^{\nu}_{t} < T_{\Omega_{\infty}}(G_{\mu}))} \equiv 1$ and $\mathbf{1}_{(L^{\nu}_{t} < T_{\Omega_{\infty}}(E_{\mu}))} \equiv 1$. Furthermore, formula \eqref{orthoSL} can be rewritten for $a \to \infty$ as
\begin{equation*}
\int_0^\infty \bar{\psi}_{\kappa_i}(x)  \bar{\psi}_{\kappa_j}(x)  \mathfrak{w}(x) dx = \delta(\kappa_i - \kappa_j) / \kappa_j
\end{equation*}
which leads to the Hankel transforms 
\begin{equation*}
(\mathcal{H}\,f)(\rho) = \int_0^\infty x J_{\nu}(\rho x) f(x) dx \quad \textrm{and} \quad f(x) = \int_0^\infty \rho J_{\nu}(\rho x) (\mathcal{H}\,f)(\rho) d\rho
\end{equation*} 
of a well-defined function $f$.

\subsection{Fractional powers of operators}
From the Cauchy integral
\[ \frac{1}{2\pi i} \int_{\Gamma} \frac{f(z)\, dz}{(z-z_0)} \]
we can define an algebraic isomorphism such that a function of a linear bounded operator $A$ is defined as
\[ f(A) = \frac{1}{2\pi i} \int_{\Gamma} f(z)\, R(z, A)\,dz  \]
where $R(z,A) = (zI - A)^{-1}$ is the resolvent operator (under conditions \eqref{cvbA}). In general, for a closed linear operator $A$ in a Banach space $X$, the definition of $A^\alpha$ for a complex $\alpha$ could be given by means of the Dunford integral
\begin{align*} 
A^\alpha = & \frac{1}{2\pi i} \int_{\Gamma} \zeta^\alpha \, (\zeta - A)^{-1}\, d\zeta = - \frac{\sin \pi \alpha}{\pi} \int_0^\infty \lambda^\alpha \, (\lambda + A)^{-1}\, d\lambda
\end{align*}
where $\Gamma$ encircles the spectrum $\sigma(A)$ counterclockwise avoiding the negative real axis and $\zeta^\alpha$ takes the principal branch (see e.g. \cite{Balak60, HoWe72, Kom66, KraSob59, Wata61} and the references therein). For such operators the expected property $A^\alpha A^\beta = A^{\alpha + \beta}$ holds true. A well-known example is the fractional Laplace operator which can be also defined (in the space of Fourier transforms) as
\begin{equation}
\triangle^{\alpha/2}u(\mathbf{x}) = -\frac{1}{2\pi} \int_{\mathbb{R}^n} e^{-i \boldsymbol{\xi} \cdot \mathbf{x}} \| \boldsymbol{\xi} \|^{\alpha} \, \mathcal{F}[u](\boldsymbol{\xi})\, d\boldsymbol{\xi}, \quad \mathbf{x} \in \mathbb{R}^n. \label{opLaplFrac}
\end{equation}
The stochastic solution to the Cauchy problem involving the fractional Laplace operator \eqref{opLaplFrac} is given by the process $\mathbf{B}(\mathfrak{H}^{\alpha}_t)$ ($\mathbf{B}$ is a Brownian motion driven by the self-adjoint Laplace operator $\triangle$ and $\mathfrak{H}^{\alpha}_t$ is a stable subordinator) and has been first investigated by  \citet{Boch49, Feller52}. In the one-dimensional case, the operator \eqref{opLaplFrac} becomes the Riesz operator \eqref{RieszOp} for which the representation \eqref{weylrap} given by means of the right and left Weyl's derivatives holds as well. 

In this section we study a fractional power of \eqref{OrdOpG} which is the power of the composition of non commuting operators and obtain the explicit representation
\begin{equation} 
\mathcal{A}\, f = -(-\mathcal{G}^{*})^{\nu}\,f = - \frac{\partial^\nu}{\partial x^\nu} \bigg( x^{\mu - 1 +\nu} \frac{\partial^\nu}{\partial (-x)^\nu}\Big( x^{1-\mu}\, f \Big) \bigg) \label{FfracOper}
\end{equation}
defined on the positive real line $\Omega_{\infty}=(0, +\infty)$ for $\nu \in (0,1)$. This representation involves the Riemann-Liouville fractional derivatives \eqref{Rfracderr} and \eqref{Rfracder} which replace the Weyl's derivatives \eqref{Weyll1} and \eqref{Weyll2} for functions defined on the positive real line. As we can check, from \eqref{dernu1}, the fractional operator \eqref{FfracOper} for $\nu=1$ becomes the operator \eqref{OrdOpG}. We show that, for $\nu \in (0,1)$, the process $G_\mu(\mathfrak{H}^\nu_t)$ is the stochastic solution to the Cauchy problem involving the operator \eqref{FfracOper} where $G_\mu$ is driven by $\mathcal{G}^{*}$. First we state the following result.
\begin{te}
Let us consider the process $G_{\mu}(t)$, $t>0$, satisfying the stochastic equation \eqref{SDEBes}. For $\mu >0$, $\forall \, \nu \in (0,1]$, the $1-$dimensional density law $g^1_\mu$ of the process $G_\mu$ solves the fractional p.d.e.\ on $\Omega_{\infty}=(0, +\infty)$
\begin{equation}
\frac{\partial^\nu g^{1}_{\mu}}{\partial t^\nu}(x,t) =  \frac{\partial^\nu}{\partial (-x)^\nu} \bigg( x^{\mu - 1 +\nu} \frac{\partial^\nu}{\partial (-x)^\nu} \Big( x^{1 - \mu}\, g^{1}_{\mu}(x,t) \Big) \bigg) \label{Mguno}
\end{equation}
subject to the initial condition $g^1_\mu(x, 0)=\delta(x)$.
\end{te}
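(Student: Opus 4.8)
The plan is to verify \eqref{Mguno} by computing the Mellin transform in the space variable $x$ of both members, checking that the two transforms agree, and then appealing to the injectivity of the Mellin transform; the initial datum $g^1_\mu(\cdot,0)=\delta$ is handled separately by weak convergence. As a preliminary consistency check, note that for $\nu=1$ the right Riemann--Liouville derivatives in \eqref{Mguno} become ordinary derivatives (by \eqref{dernu1}), so the equation reduces to $\partial_t g^1_\mu=\mathcal{G}^{*}g^1_\mu$, which is the Fokker--Planck equation of the diffusion \eqref{SDEBes}.

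First I would record the density explicitly: by \eqref{differentg}, for $\gamma=1$ one has
\[
g^1_\mu(x,t)=\tfrac1t\,Q^1_\mu\!\left(\tfrac{x}{t}\right)=\frac{x^{\mu-1}}{t^{\mu}\,\Gamma(\mu)}\,e^{-x/t},
\]
the Gamma law with shape $\mu$ and scale $t$, whose Mellin transform in $x$ is the elementary Gamma integral
\[
\mathcal{M}[g^1_\mu(\cdot,t)](\eta)=\frac{\Gamma(\eta+\mu-1)}{\Gamma(\mu)}\,t^{\eta-1},\qquad \Re\{\eta\}>1-\mu .
\]
Next I would assemble the operational rules to be used: (i) $\mathcal{M}[x^{a}f](\eta)=\mathcal{M}[f](\eta+a)$; (ii) for the right Riemann--Liouville derivative \eqref{Rfracderr} (the restriction of the Weyl derivative \eqref{Weyll1} to $(0,\infty)$), a Fubini interchange followed by one integration by parts gives $\mathcal{M}\big[\tfrac{d^\nu}{d(-x)^\nu}f\big](\eta)=\tfrac{\Gamma(\eta)}{\Gamma(\eta-\nu)}\,\mathcal{M}[f](\eta-\nu)$; (iii) the left Riemann--Liouville derivative \eqref{Rfracder} acts on powers by $\tfrac{d^\nu}{dt^\nu}t^{\beta}=\tfrac{\Gamma(\beta+1)}{\Gamma(\beta-\nu+1)}\,t^{\beta-\nu}$ for $\beta>-1$. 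A convenient observation, which also makes the structure transparent, is that $x^{1-\mu}g^1_\mu(x,t)=\tfrac{1}{t^{\mu}\Gamma(\mu)}\,e^{-x/t}$, so the innermost $\tfrac{\partial^\nu}{\partial(-x)^\nu}$ simply multiplies by $t^{-\nu}$, exponentials being eigenfunctions of the right Weyl derivative.

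Then I would apply rules (i)--(ii) to the right-hand side of \eqref{Mguno} one operation at a time --- strip the factor $x^{1-\mu}$, apply $\tfrac{\partial^\nu}{\partial(-x)^\nu}$, multiply by $x^{\mu-1+\nu}$, apply $\tfrac{\partial^\nu}{\partial(-x)^\nu}$ once more --- keeping track only of the shifts of the Gamma arguments and of the powers of $t$. This yields, for the Mellin transform in $x$ of the right-hand side of \eqref{Mguno},
\[
\frac{\Gamma(\eta)}{\Gamma(\eta-\nu)}\cdot\frac{\Gamma(\eta+\mu-1)}{\Gamma(\mu)}\,t^{\eta-1-\nu}.
\]
On the left-hand side $\mathcal{M}$ in $x$ commutes with $\tfrac{\partial^\nu}{\partial t^\nu}$ (differentiation under the integral sign, legitimate for $t>0$ by the exponential decay), and rule (iii) applied to $t^{\eta-1}$ produces exactly this same function of $(\eta,t)$. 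Since both transforms are meromorphic in $\eta$ and agree on a common strip, they agree identically, and by injectivity of the Mellin transform the two members of \eqref{Mguno} coincide for all $x,t>0$. Finally, for the initial condition, $g^1_\mu(\cdot,t)$ is the law of $G_\mu(t)$ and the diffusion \eqref{SDEBes} issues from the origin, so $\int_0^\infty g^1_\mu(x,t)\,\phi(x)\,dx\to\phi(0)$ as $t\to0^+$ for every bounded continuous $\phi$; equivalently, the Gamma family with scale $t$ collapses to $\delta$ as $t\downarrow0$.

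The main obstacle is purely technical: justifying rule (ii) --- the vanishing of the boundary term in the integration by parts and the admissibility of the Fubini interchange --- on the successive fundamental strips created by the chain of operations, and verifying that these strips have a common intersection, so that the whole computation is valid in some region of $\eta$ (from which the identity then propagates by analytic continuation). This is harmless here because every intermediate function is a constant times $x^{a}e^{-x/t}$ for a suitable exponent $a$, hence exponentially small at $+\infty$ with an integrable power behaviour at $0^+$.
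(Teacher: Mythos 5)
Your proposal is correct and follows essentially the same route as the paper: Mellin transform in $x$, the operational rule $\mathcal{M}\big[\tfrac{d^\nu}{d(-x)^\nu}f\big](\eta)=\tfrac{\Gamma(\eta)}{\Gamma(\eta-\nu)}\mathcal{M}[f](\eta-\nu)$ applied twice with the power-weight shifts, and matching against $\tfrac{\partial^\nu}{\partial t^\nu}t^{\eta-1}=\tfrac{\Gamma(\eta)}{\Gamma(\eta-\nu)}t^{\eta-1-\nu}$; your justification of the vanishing boundary terms via the explicit form $x^{1-\mu}g^1_\mu=t^{-\mu}e^{-x/t}/\Gamma(\mu)$ is exactly the paper's condition \eqref{Trond}, established there by the same eigenfunction computation $(I_{0-}^{1-\alpha}k^1_\mu(\cdot,t))(x)=t^{\alpha-1}k^1_\mu(x,t)$.
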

The stochastic solution to \eqref{Mguno} is the process $G_\mu$ which does not depend on the fractional index $\nu \in (0,1]$. Furthermore, we notice that the space operator appearing in \eqref{Mguno} is different from $\mathcal{A}$.\\

We proceed our analysis by considering the process $F^{\nu, \beta}_t = \mathfrak{H}^{\nu}_{\mathfrak{L}^{\beta}_t}$, $t>0$ with law
\begin{equation}
\mathfrak{f}_{\nu, \beta}(x,t) = \int_0^\infty h_{\nu}(x, s) \, l_{\beta}(s, t) \, ds , \quad x \geq 0,\; t>0,\; \nu, \beta \in (0,1) \label{lawf}
\end{equation}
which has been thoroughly studied by several authors, see e.g.\ \cite{ Dov2, DO2, Lanc, MLP01, MPS05}. If $\nu=\beta$, then the law \eqref{lawf} takes the form $\mathfrak{f}_{\nu, \nu}(x,t)=t^{-1}f_{\nu}(t^{-1}x)$ where
\begin{equation}
f_{\nu}(x) = \frac{1}{\pi} \frac{x^{\nu -1} \sin \pi \nu}{1+ 2 x^{\nu}\cos \pi \nu + x^{2\nu}}, \quad x\geq 0,\; t>0, \quad \nu \in (0,1) \label{dratio}
\end{equation}
and $F^{\nu, \nu}_t \stackrel{law}{=} t \times \, _1 \mathfrak{H}^{\nu}_t / \, _2\mathfrak{H}^{\nu}_t$, $t>0$, (which means that $F^{\nu, \nu}_t  \in \mathbb{P}_{1}$) where $\, _j\mathfrak{H}^{\nu}_t$, $j=1,2$ are independent stable subordinators and the ratio $\, _1 \mathfrak{H}^{\nu}_t / \, _2\mathfrak{H}^{\nu}_t$ is independent of $t$ (see \cite{ChYor03, Dov2, Lamp63, Zol57}). The density law $f_\nu$ arises in many important contexts, we refer to the paper by \citet{Lanc} and the references therein for details. 

\begin{lem}
The governing equation of the process $F^{\alpha, \beta}_t$, $t>0$, with density law \eqref{lawf}, is written as
\begin{equation}
\left( \frac{\partial^\beta}{\partial t^\beta} + \frac{\partial^\nu}{\partial x^\nu} \right) \mathfrak{f}_{\nu, \beta}(x,t)=\delta(x)\frac{t^{-\beta}}{\Gamma(1-\beta)}, \quad x \geq 0,\; t>0 \label{eq1lamp}
\end{equation}
with $\mathfrak{f}_{\nu,\beta}(\partial \Omega_{\infty},t) =0$ and $\mathfrak{f}_{\nu, \beta}(x,0)=\delta(x)$ or, by considering \eqref{RCfracder},
\begin{equation}
\left( {^cD^\beta_{t}} + \frac{\partial^\nu}{\partial x^\nu} \right)\mathfrak{f}_{\nu, \beta}(x,t)= 0, \quad x \geq 0,\; t>0 \label{eq2lamp}
\end{equation}
with $\mathfrak{f}_{\nu, \beta}(x,0)=\delta(x)$.
\end{lem}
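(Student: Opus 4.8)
The plan is to pass to transform space and reduce both \eqref{eq1lamp} and \eqref{eq2lamp} to an elementary identity between rational functions. First I would apply the Laplace transform in time, $t\mapsto\lambda$, and in space, $x\mapsto\xi$, to the subordination formula \eqref{lawf}; since all integrands are non-negative, Tonelli's theorem permits interchanging the order of integration, and using the Laplace transforms \eqref{LapSpace} and \eqref{LapTime} one gets
\[ \widetilde{\mathfrak{f}}_{\nu,\beta}(\xi,\lambda) = \int_0^\infty \Big( \int_0^\infty e^{-\xi x}\, h_\nu(x,s)\, dx \Big)\Big( \int_0^\infty e^{-\lambda t}\, l_\beta(s,t)\, dt \Big)\, ds = \lambda^{\beta-1}\int_0^\infty e^{-s(\xi^\nu+\lambda^\beta)}\, ds = \frac{\lambda^{\beta-1}}{\lambda^\beta+\xi^\nu}. \]
Thus $\widetilde{\mathfrak{f}}_{\nu,\beta}$ is the rational function $\lambda^{\beta-1}/(\lambda^\beta+\xi^\nu)$, and it remains to recognise this as the transform of a solution of \eqref{eq1lamp}--\eqref{eq2lamp}.

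Next I would record the double-transform ($x\mapsto\xi$, $t\mapsto\lambda$) symbols of the fractional operators. For $0<\beta<1$ the Dzhrbashyan-Caputo derivative \eqref{Cfracder} carries the time-Laplace symbol $\lambda\mapsto\lambda^\beta$ together with the initial-value term $-\lambda^{\beta-1}\mathfrak{f}_{\nu,\beta}(x,0^+)$; here $\mathfrak{f}_{\nu,\beta}(x,0^+)=\delta(x)$ because $\mathfrak{L}^\beta_0=\mathfrak{H}^\nu_0=0$ almost surely, so its $x$-transform is $-\lambda^{\beta-1}$ and ${^cD^\beta_{t}}\mathfrak{f}_{\nu,\beta}$ has double transform $\lambda^\beta\widetilde{\mathfrak{f}}_{\nu,\beta}(\xi,\lambda)-\lambda^{\beta-1}$. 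For the left Riemann-Liouville space derivative \eqref{Rfracder} on $\Omega_{\infty}=(0,\infty)$ the spatial Laplace symbol is $\xi\mapsto\xi^\nu$ with no boundary contribution, since $\mathfrak{f}_{\nu,\beta}(\cdot,t)$ --- like $h_\nu(\cdot,s)$ --- decays faster than any power at the origin (this is visible from the Mellin transform \eqref{melTUTTE}, whose strip of analyticity extends to $-\infty$ on the left), so $\frac{\partial^\nu}{\partial x^\nu}\mathfrak{f}_{\nu,\beta}$ has double transform $\xi^\nu\widetilde{\mathfrak{f}}_{\nu,\beta}(\xi,\lambda)$. Adding the two, the double transform of the left-hand side of \eqref{eq2lamp} equals $(\lambda^\beta+\xi^\nu)\,\widetilde{\mathfrak{f}}_{\nu,\beta}(\xi,\lambda)-\lambda^{\beta-1}$, which vanishes by the first step; by injectivity of the Laplace transform, \eqref{eq2lamp} follows, together with the stated initial and boundary conditions.

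Finally, \eqref{eq1lamp} would follow from \eqref{eq2lamp} by the conversion formula \eqref{RCfracder} with $n=1$: since $\mathfrak{f}_{\nu,\beta}(x,0^+)=\delta(x)$, one has
\[ {^cD^\beta_{t}}\,\mathfrak{f}_{\nu,\beta}(x,t) = \frac{\partial^\beta}{\partial t^\beta}\mathfrak{f}_{\nu,\beta}(x,t) - \delta(x)\,\frac{t^{-\beta}}{\Gamma(1-\beta)}, \]
and substituting this into \eqref{eq2lamp} and rearranging yields \eqref{eq1lamp}; the data $\mathfrak{f}_{\nu,\beta}(\partial\Omega_{\infty},t)=0$ and $\mathfrak{f}_{\nu,\beta}(x,0)=\delta(x)$ are precisely those used above. (One can also verify \eqref{eq1lamp} directly in transform space, the source term $\delta(x)\,t^{-\beta}/\Gamma(1-\beta)$ having double transform $\lambda^{\beta-1}$.)

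The hard part will not be the transform algebra, which is short, but the rigorous handling of the distributional data: justifying the double-transform manipulations uniformly as $x\to0$ and $t\to0$, and, in particular, confirming that the fractional-integral boundary term $(I^{1-\nu}\mathfrak{f}_{\nu,\beta})(0^+,t)$ attached to the spatial Riemann-Liouville derivative genuinely vanishes for each fixed $t>0$. A convenient way to circumvent this is to work throughout with the Mellin transform in $x$ (using \eqref{melTUTTE}) in place of the spatial Laplace transform: there the location of the poles of $\mathcal{M}[h_\nu]$ makes the vanishing of that boundary contribution transparent, while the time variable is still treated by Laplace transform exactly as above.
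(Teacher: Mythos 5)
Your proposal is correct and follows essentially the same route as the paper: compute the double Laplace transform of $\mathfrak{f}_{\nu,\beta}$ from the subordination formula \eqref{lawf} using \eqref{LapSpace} and \eqref{LapTime} to get $\lambda^{\beta-1}(\lambda^\beta+\xi^\nu)^{-1}$, then verify \eqref{eq2lamp} via the Caputo Laplace rule $\mathcal{L}[{^cD^\beta_t}f](\lambda)=\lambda^\beta\mathcal{L}[f](\lambda)-\lambda^{\beta-1}f(0^+)$ and the spatial symbol $\xi^\nu$, with \eqref{eq1lamp} following from \eqref{RCfracder}. The paper's proof is terser (it leaves the passage to \eqref{eq1lamp} and the vanishing of the spatial boundary term implicit), so your added justifications are refinements rather than a different argument.
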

\begin{proof}
From the Laplace transforms \eqref{LapSpace} and \eqref{LapTime} we have that
\begin{equation*}
\Psi(\xi, \lambda) = \int_0^\infty e^{-\lambda t} \left( E\, e^{-\xi F^{\alpha, \beta}_t} \right) dt = \lambda^{\beta -1}(\lambda^\beta + \xi^\nu)^{-1}.
\end{equation*}
Let us consider the equation \eqref{eq2lamp}. From the fact that 
\begin{equation*}
\mathcal{L}\left[ {^cD^{\beta}_{t}} f \right] (\lambda) = \lambda^\beta \mathcal{L}[f](\lambda) - \lambda^{\beta -1} f(0^{+}) , \quad \beta \in (0,1)
\end{equation*}
(which comes from \eqref{RCfracder}) we obtain
\begin{equation*}
\lambda^{\beta} \Psi(\xi, \lambda) - \lambda^{\beta -1} + \xi^{\nu}\Psi(\xi, \lambda) = 0
\end{equation*}
which concludes the proof.
\end{proof}

We state the main result of this section concerning the operator \eqref{FfracOper}.
\begin{te}
For $x \in \Omega_{\infty}=(0, +\infty)$, $t>0$, $\mu >0$ and $\beta, \nu \in (0,1]$ we have that
\begin{itemize}
\item [i)] the density law $\mathfrak{g}^{\nu}_{\mu}(x,t) = \int_0^\infty g^{1}_{\mu}(x, s)\,  h_{\nu}(s,t) \,ds$ solves the fractional p.d.e.
\begin{equation}
\frac{\partial \mathfrak{g}^{\nu}_{\mu}}{\partial t} (x,t) = \mathcal{A}\, \mathfrak{g}^{\nu}_{\mu}(x,t), \label{Mgdue}
\end{equation}
\item [ii)] the density law $\mathbf{g}^{\nu, \beta}_{\mu}(x,t) = \int_0^\infty g^{1}_{\mu}(x, s)\, \mathfrak{f}_{\nu, \beta}(s ,t) \,ds$ solves the fractional p.d.e.
\begin{equation}
\frac{\partial^\beta \mathbf{g}^{\nu, \beta}_{\mu}}{\partial t^\beta}(x,t) = \mathcal{A}\, \mathbf{g}^{\nu, \beta}_{\mu}(x,t). \label{Mgtre}
\end{equation}
\end{itemize}
Furthermore, 
\begin{equation}
\mathbf{g}^{\nu, \beta}_{\mu}(x,t)  = \frac{1}{t^{\beta/\nu}} \mathbf{G}^{\nu, \beta}_{\mu}\left( \frac{x}{t^{\beta/\nu}} \right)
\end{equation}
where
\begin{equation*}
\mathbf{G}^{\nu, \beta}_{\mu}(x) = \frac{1}{x} H^{2,1}_{3,3} \left[ x \bigg| \begin{array}{ccc} (1, \frac{1}{\nu}); & (1, \frac{\beta}{\nu}); & (\mu,0) \\ (\mu , 1); & (1, \frac{1}{\nu}) ;& (1,1) \end{array} \right], \quad x>0
\end{equation*}
is a Fox's function defined in \eqref{defHFoxApp}.
\label{Teogbold}
\end{te}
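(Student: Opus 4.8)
The plan is to reduce both assertions to the ordinary forward equation $\partial_s g^1_\mu=\mathcal{G}^{*}g^1_\mu$, $g^1_\mu(\cdot,0)=\delta$ (the $\nu=1$ case), by iterated Bochner-type subordination, and to read off the closed form from the Mellin transform, using that $g^1_\mu$, $h_\nu$ and $l_\beta$ all have explicit Mellin transforms (formula \eqref{melTUTTE} together with the Euler integral for $Q^1_\mu$). For part i), I would first record the elementary identity $\frac{\partial^{\nu}}{\partial(-s)^{\nu}}e^{-\lambda s}=\lambda^{\nu}e^{-\lambda s}$ for $\lambda,s>0$, immediate from \eqref{Rfracderr}, \eqref{Weyll1} and a Gamma integral, and lift it through the spectral (equivalently resolvent, cf.\ \eqref{cvbA}) representation $g^1_\mu(\cdot,s)=e^{s\mathcal{G}^{*}}\delta$ to the bridge identity $\mathcal{A}\,g^1_\mu(\cdot,s)=-\,\frac{\partial^{\nu}}{\partial(-s)^{\nu}}g^1_\mu(\cdot,s)$, where $\mathcal{A}$ acts in the space variable; thus the space operator \eqref{FfracOper} reduces, on $g^1_\mu$, to minus the right Riemann--Liouville $\nu$-derivative in the inner time $s$. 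Writing then $\mathcal{A}\,\mathfrak{g}^{\nu}_{\mu}(x,t)=\int_0^\infty \mathcal{A}\,g^1_\mu(x,s)\,h_{\nu}(s,t)\,ds$, I would integrate by parts in $s$ (the right Riemann--Liouville derivative on $(0,\infty)$ being the formal adjoint of the left one, cf.\ \citet{SKM93}, with vanishing boundary contributions), and invoke the equation $\frac{\partial^{\nu}}{\partial s^{\nu}}h_{\nu}(s,t)=-\partial_t h_{\nu}(s,t)$ recalled in Section~\ref{intNot}; this collapses the right-hand side to $\partial_t\int_0^\infty g^1_\mu(x,s)h_{\nu}(s,t)\,ds=\partial_t\mathfrak{g}^{\nu}_{\mu}(x,t)$, while the initial datum holds because $G_\mu(\mathfrak{H}^{\nu}_0)=0$. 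A more computational route, which sidesteps the boundary-term analysis, is to verify the equation entirely on the Mellin side: the symbol of $\mathcal{A}$ read from \eqref{FfracOper} is a shift $\eta\mapsto\eta-\nu$ composed with a quotient of Gamma factors (from the Mellin rules for multiplication by $x^a$ and for the two Riemann--Liouville derivatives), and one checks the resulting relation against the explicit $\mathcal{M}[\mathfrak{g}^{\nu}_{\mu}(\cdot,t)](\eta)$ as an identity of meromorphic functions on a common strip, then inverts.

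For part ii), I would rewrite the iterated integral by Fubini, using $\mathfrak{f}_{\nu,\beta}(s,t)=\int_0^\infty h_{\nu}(s,\sigma)l_{\beta}(\sigma,t)\,d\sigma$ (formula \eqref{lawf}), as $\mathbf{g}^{\nu,\beta}_{\mu}(x,t)=\int_0^\infty\mathfrak{g}^{\nu}_{\mu}(x,\sigma)\,l_{\beta}(\sigma,t)\,d\sigma$, so that ii) follows from i) by the standard inverse-stable subordination in time---exactly the mechanism of the Lemma already established above for the process $F^{\alpha,\beta}_t$. Concretely, the Laplace transform in $t$ with $\mathcal{L}[l_{\beta}(\sigma,\cdot)](\lambda)=\lambda^{\beta-1}e^{-\sigma\lambda^{\beta}}$ from \eqref{LapTime} gives $\mathcal{L}[\mathbf{g}^{\nu,\beta}_{\mu}(x,\cdot)](\lambda)=\lambda^{\beta-1}\mathcal{L}[\mathfrak{g}^{\nu}_{\mu}(x,\cdot)](\lambda^{\beta})$; inserting the Laplace transform of i), namely $p\,\mathcal{L}[\mathfrak{g}^{\nu}_{\mu}(x,\cdot)](p)-\delta(x)=\mathcal{A}\,\mathcal{L}[\mathfrak{g}^{\nu}_{\mu}(x,\cdot)](p)$, at $p=\lambda^{\beta}$ and multiplying by $\lambda^{\beta-1}$ yields $\lambda^{\beta}\mathcal{L}[\mathbf{g}^{\nu,\beta}_{\mu}(x,\cdot)](\lambda)-\lambda^{\beta-1}\delta(x)=\mathcal{A}\,\mathcal{L}[\mathbf{g}^{\nu,\beta}_{\mu}(x,\cdot)](\lambda)$, which by $\mathcal{L}[{^cD^{\beta}_{t}}f](\lambda)=\lambda^{\beta}\mathcal{L}[f](\lambda)-\lambda^{\beta-1}f(0^{+})$ and $\mathbf{g}^{\nu,\beta}_{\mu}(\cdot,0^{+})=\delta$ is the Laplace transform of ${^cD^{\beta}_{t}}\mathbf{g}^{\nu,\beta}_{\mu}=\mathcal{A}\,\mathbf{g}^{\nu,\beta}_{\mu}$; on $\Omega_{\infty}$ this coincides with the Riemann--Liouville form displayed in the statement because $\mathbf{g}^{\nu,\beta}_{\mu}(x,0^{+})=0$ for each fixed $x>0$ (cf.\ \eqref{RCfracder}).

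For the closed form, I would use the scalings $G_\mu(ct)\stackrel{law}{=}c\,G_\mu(t)$, $\mathfrak{H}^{\nu}_{ct}\stackrel{law}{=}c^{1/\nu}\mathfrak{H}^{\nu}_{t}$ and $\mathfrak{L}^{\beta}_{ct}\stackrel{law}{=}c^{\beta}\mathfrak{L}^{\beta}_{t}$ to conclude that $G_\mu(\mathfrak{H}^{\nu}_{\mathfrak{L}^{\beta}_t})$ is self-similar of index $\beta/\nu$, hence $\mathbf{g}^{\nu,\beta}_{\mu}(x,t)=t^{-\beta/\nu}\mathbf{G}^{\nu,\beta}_{\mu}(x\,t^{-\beta/\nu})$ with $\mathbf{G}^{\nu,\beta}_{\mu}=\mathbf{g}^{\nu,\beta}_{\mu}(\cdot,1)$. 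Next, since $\mathbf{g}^{\nu,\beta}_{\mu}(\cdot,1)$ is the Mellin convolution of $Q^1_\mu$ with $\mathfrak{f}_{\nu,\beta}(\cdot,1)$, its Mellin transform is the product $\frac{\Gamma(\mu+\eta-1)}{\Gamma(\mu)}\cdot\mathcal{M}[\mathfrak{f}_{\nu,\beta}(\cdot,1)](\eta)$, where $\mathcal{M}[\mathfrak{f}_{\nu,\beta}(\cdot,1)](\eta)$ is computed by integrating $\mathcal{M}[h_{\nu}(\cdot,\sigma)](\eta)$ of \eqref{melTUTTE} against $l_{\beta}(\sigma,1)$; recognising the resulting quotient of Gamma functions---after the shift $\eta\mapsto\eta-1$ produced by the $1/x$ prefactor---as the Mellin--Barnes kernel \eqref{mellinHfox} of the Fox function \eqref{defHFoxApp} identifies exactly the $H^{2,1}_{3,3}$ written in the statement, and Mellin inversion concludes.

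The main obstacle is the rigour in part i): making precise the bridge identity $(-\mathcal{G}^{*})^{\nu}g^1_\mu=\frac{\partial^{\nu}}{\partial(-s)^{\nu}}g^1_\mu$ and, equivalently, justifying that the composed Riemann--Liouville derivatives in \eqref{FfracOper} act on $\mathfrak{g}^{\nu}_{\mu}$ as the formal Mellin/spectral calculus predicts---that is, that the fractional integration by parts leaves no surviving boundary term for \emph{every} $\mu>0$, and that the Mellin inversion contour can be placed in a vertical strip on which all the transforms involved are simultaneously analytic (the symbol of $\mathcal{A}$ carries poles, so the asserted relation is one of meromorphic functions and the admissible strip must be pinned down). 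The parallel reduction in ii), the self-similarity, and the Fox-function bookkeeping are comparatively routine once i) is secured.
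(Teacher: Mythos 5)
Your proposal is correct in substance, and its ``more computational route'' is in fact the paper's actual proof: the paper works entirely on the Mellin side, first deriving the symbol of $\mathcal{A}$, namely $\mathcal{M}[\mathcal{A}f(\cdot,t)](\eta)=-\tfrac{\Gamma(1-\eta+\nu)}{\Gamma(1-\eta)}\tfrac{\Gamma(\eta+\mu-1)}{\Gamma(\eta+\mu-1-\nu)}\,\mathcal{M}[f(\cdot,t)](\eta-\nu)$ from \eqref{derMfracmeno}--\eqref{derMfrac2} under the vanishing-boundary-term condition \eqref{Trond}, then computing $\mathcal{M}[\mathbf{g}^{\nu,\beta}_{\mu}(\cdot,t)](\eta)$ in \eqref{MelGProof} as a product of Gamma quotients times $t^{\beta(\eta-1)/\nu}$, and finally verifying the p.d.e.\ as an identity of Mellin transforms using the Riemann--Liouville derivative of a power of $t$; the Fox-function identification at the end is exactly your Mellin--Barnes bookkeeping, and the self-similarity follows from the $t$-dependence of \eqref{MelGProof}. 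Where you genuinely differ is in the logical order and the primary mechanism: the paper proves ii) first and obtains i) as the case $\beta=1$, whereas you prove i) first and deduce ii) by subordinating with $l_{\beta}$ and taking a Laplace transform in $t$, which yields the Caputo form and must then be converted to the stated Riemann--Liouville form via $\mathbf{g}^{\nu,\beta}_{\mu}(x,0^{+})=0$ for $x>0$ (the paper verifies the Riemann--Liouville form directly and never needs this conversion). Your preferred route for i) --- the bridge identity $\mathcal{A}\,g^{1}_{\mu}(\cdot,s)=-\partial^{\nu}g^{1}_{\mu}(\cdot,s)/\partial(-s)^{\nu}$ followed by fractional integration by parts against $h_{\nu}$ and the equation $\partial^{\nu}_{s}h_{\nu}=-\partial_{t}h_{\nu}$ --- is conceptually appealing but presupposes that the explicit Riemann--Liouville expression \eqref{FfracOper} agrees with the spectral power $-(-\mathcal{G}^{*})^{\nu}$ on $e^{s\mathcal{G}^{*}}\delta$; that identification is precisely what the explicit Mellin computation replaces, so your fallback route (the paper's) is the one that actually closes the argument. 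The technical caveats you flag --- vanishing of $(\mathcal{T}I^{1-\nu}_{0\pm}f)(\eta)$ and the existence of a common strip of analyticity --- are exactly the points the paper addresses, somewhat tersely, via \eqref{Trond} and the strips $\mathbb{H}^{1}_{a}$ with $a=\max\{0,1-\mu\}$.
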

From Theorem \ref{Teogbold} we have that 
\[ \mathfrak{g}^{\nu}_{\mu}(x,t) = E^x\delta(G_\mu(\mathfrak{H}^{\nu}_t)), \]
\[ \mathbf{g}^{\nu, \beta}_{\mu}(x,t) = E^x\delta(G_\mu(F^{\nu, \beta}_t)) \]
and thus  $G_\mu(F^{\nu, \beta}_t)$, $t>0$ is the stochastic solution to \eqref{Mgtre} whereas $G_\mu(F^{\nu, 1}_t) = G_\mu(\mathfrak{H}^\alpha_t)$, $t>0$ represents the stochastic solution to \eqref{Mgdue}. This is because of the fact that $\mathfrak{H}^{\nu}_{\mathfrak{L}^{1}_t} \stackrel{a.s.}{=} \mathfrak{H}^{\nu}_t$, $t>0$, being $\mathfrak{L}^{1}_t \stackrel{a.s.}{=} t$ the elementary subordinator and $l_\nu(s,t) \to \delta(t-s)$ for $\nu \to 1$. Furthermore, from the formulas \eqref{mellinHfox} and \eqref{intStrip}, by taking into account \eqref{MelGProof}, we immediately have that
\begin{equation*}
\mathbf{G}^{\nu, \nu}_{\mu}(x) = \frac{1}{x} H^{1,1}_{2,2} \left[ x \bigg| \begin{array}{ccc} (1, \frac{1}{\nu});  & (\mu,0) \\ (\mu , 1); & (1, \frac{1}{\nu})  \end{array} \right], \quad x>0, \; \nu \in (0,1),
\end{equation*}
\begin{equation*}
\mathbf{G}^{\nu, 1}_{\mu}(x) = \frac{1}{x} H^{1,1}_{2,2} \left[ x \bigg| \begin{array}{cc} (1, \frac{1}{\nu}); & (\mu,0) \\ (\mu , 1) ;& (1,1) \end{array} \right], \quad x>0, \; \nu \in (0,1)
\end{equation*}
and
\begin{equation*}
\mathbf{G}^{1, 1}_{\mu}(x) = \frac{1}{x} H^{1,0}_{1,1} \left[ x \bigg| \begin{array}{c}  (\mu,0) \\ (\mu , 1) \end{array} \right], \quad x>0.
\end{equation*}
For $\beta=\nu$, the equation \eqref{Mgtre} takes the form
\begin{equation}
\frac{\partial^\nu \mathbf{g}^{\nu, \nu}_{\mu}}{\partial t^\nu} (x,t) =  - \frac{\partial^\nu}{\partial x^\nu} \bigg( x^{\mu - 1 +\nu} \frac{\partial^\nu}{\partial (-x)^\nu} \Big( x^{1 - \mu}\, \mathbf{g}^{\nu, \nu}_{\mu}(x,t) \Big) \bigg) \label{pdeLSLSL}
\end{equation}
which differs form \eqref{Mguno} and represents the governing equation of the process 
\[ G_\mu\left( t \times \, {_1 \mathfrak{H}}^{\nu}_1 / \, {_2\mathfrak{H}^{\nu}}_1 \right), \quad t>0 \]
where $\, _j\mathfrak{H}^{\nu}_t$, $t>0$, $j=1,2$ are independent stable subordinators. Here, the stochastic solution to \eqref{pdeLSLSL} depends on $\nu$ only by means of the ratio ${_1 \mathfrak{H}}^{\nu}_1 / \, {_2\mathfrak{H}}^{\nu}_1$ which possesses distribution \eqref{dratio}. For $\beta=1$, equation \eqref{Mgtre} becomes \eqref{Mgdue} and writes
\begin{equation}
\frac{\partial \mathbf{g}^{\nu, 1}_{\mu}}{\partial t}(x,t) =  - \frac{\partial^\nu}{\partial x^\nu} \bigg( x^{\mu - 1 +\nu} \frac{\partial^\nu}{\partial (-x)^\nu} \Big( x^{1 - \mu}\, \mathbf{g}^{\nu, 1}_{\mu}(x,t) \Big) \bigg) \label{pdeLSLSLuno}
\end{equation}
($\mathfrak{g}^{\nu}_{\mu} = \mathbf{g}^{\nu, 1}_{\mu}$) with stochastic solution given by $G_\mu(\mathfrak{H}^\nu_t), \quad t>0$. Finally, for $\nu=1$ in \eqref{Mgtre}, we reobtain the governing equation of the process $G_\mu(L^\nu_t)$, $t>0$ already investigated in Theorem \ref{dfjlY}. 

\begin{os}
\normalfont
We observe that, for $\mu \in \mathbb{N}$,
\begin{equation*}
G_\mu(\mathfrak{H}^\nu_t) = \| \mathbf{B}(\mathfrak{H}^\nu_t) \|^2 =  \sum_{j=1}^{\mu} \left[ {_j B(\mathfrak{H}^\nu_t)} \right]^{2}
\end{equation*}
where $\mathbf{B}(t) = ({_1B(t)}, \ldots, {_nB(t)})$, $t>0$  and ${_jB(t)}$, $j=1, \ldots , n$ are independent Brownian motions. From the Bochner's subordination rule we get ${_jB(\mathfrak{H}^\nu_t)}\stackrel{law}{=}{_jS^{2\nu}_t}$ which are symmetric stable processes with $E\exp i\xi {_jS^{2\nu}_t} = \exp -t |\xi |^{2\nu}$ for all $j=1,\ldots , n$. Thus, by considering $n$ independent stable processes $_jS^{2\nu}_t$ and $\mathbf{S}^{2\nu}_t = ({_1S^{2\nu}_t}, \ldots , {_nS^{2\nu}_t})$, $t>0$, we obtain that $G_\mu(\mathfrak{H}^\nu_t) \stackrel{law}{=} \| \mathbf{S}^{2\nu}_t \|^2$ . 
\end{os}

\begin{os}
\normalfont
For $\mathbf{B}(t) \in \mathbb{R}^3$, the three dimensional Bessel process represents a radial diffusion on a homogeneous ball. The subordinated squared Bessel process can be therefore regarded as a radial diffusion on a non-homogeneous ball, with fractal structure for instance. This interpretation is due to the fact that the random time $\mathfrak{H}^\nu_t$ has non-negative increments and therefore, non-decreasing paths. Furthermore, the subordinated process $G_\mu(\mathfrak{H}^\nu_t)$ speed up as $\mathfrak{H}^\nu_t$ increases. For $\nu\to1$, $\mathfrak{H}^\nu_t \to t $ a.s. and $G_\mu(\mathfrak{H}^\nu_t)$ becomes standard diffusion because of the linear growing of time.
\end{os}

\begin{os}
\normalfont
From \eqref{meDAtmp} we obtain that
\begin{equation*}
E\left[ G_\mu(F^{\nu, \beta}_t) \right]^r \propto t^{\frac{\beta}{\nu}r}, \quad r>0, \; \nu, \beta \in (0, 1] .
\end{equation*} 
Moreover, $F^{\nu, \beta}_t \to \mathfrak{L}^\beta_t$ for $\nu \to 1$ and therefore we have that $G_\mu(\mathfrak{L}^\beta_t)$ is a subdiffusion whereas, from the fact that $F^{\nu, \beta}_t \to \mathfrak{H}^\nu_t$ for $\beta \to 1$ we get the superdiffusion $G_\mu(\mathfrak{H}^\nu_t)$.
\end{os}

\subsection{Explicit representations of solutions via Mellin convolutions}
The laws of the processes $\mathfrak{H}^{\nu}_t$ and $\mathfrak{L}^{\nu}_t$ can be written in terms of H-functions as pointed out in Section \ref{intNot}. Alternative expressions can be given in terms of the Wright function but only for $\nu=1/2, 1/3$ we obtain a closed form of the density laws and therefore of the solutions investigated in the previous section. 

We give an explicit representation of the solutions presented so far by exploiting the Mellin convolutions of generalized gamma functions. The simplest convolutions we deal with are written below: for $x ,t>0$ and $\gamma \neq 0$, $\mu_1$, $\mu_2>0$,
\begin{equation}
g^{\gamma}_{\mu_1} \star g^{- \gamma}_{\mu_2}(x,t) = \frac{|\gamma |}{B(\mu_1, \mu_2)} \frac{x^{\gamma \mu_1 -1} t^{\gamma \mu_2}}{(t^\gamma + x^\gamma)^{\mu_1 + \mu_2}} \label{gConv}
\end{equation}
where $B(\cdot, \cdot)$ is the Beta function (see e.g. \citet[formula 8.384]{GR}) and,
\begin{equation}
g^{\gamma}_{\mu_1} \star g^{\gamma}_{\mu_2} (x,t) = \frac{2| \gamma | \, \left(x^\gamma /t^\gamma \right)^\frac{\mu_1+\mu_2}{2}}{x\, \Gamma(\mu_1)\Gamma(\mu_2)} K_{\mu_2-\mu_1} \left( 2 \sqrt{\left(x^\gamma / t^\gamma \right)} \right),  \label{eConv}
\end{equation}
where $K_{\alpha}$ is the modified Bessel function of the second kind. In particular
\begin{align}
K_{\alpha} (z) = \frac{\pi}{2} \frac{I_{-\alpha}(z) - I_{\alpha}(z)}{\sin \alpha \pi}, \quad \alpha \textrm{ not integer} \label{functionK}
\end{align}
(see \cite[formula 8.485]{GR}) where
\begin{align}
I_{\alpha}(z) = \sum_{k \geq 0} \frac{(z/2)^{\alpha + 2k}}{k!\, \Gamma(\alpha + k+1)}, \quad |z| < \infty, \quad |\textrm{arg} z| < \pi
\end{align}
is the modified Bessel function of the first kind (see \cite[formula 8.445]{GR}).
\begin{df} 
For $-\infty < a < b < \infty$, we define the space
\begin{equation*}
\mathbb{M}_{a}^{b} = \left\lbrace  f: \mathbb{R}_{+} \mapsto \mathbb{C} \, \big|  \, x^{\eta-1}f(x) \in L^{1}(\mathbb{R}_{+}), \forall \eta \in \mathbb{H}^{b}_{a} \right\rbrace
\end{equation*}
where $\mathbb{H}^{b}_{a} = \{ \zeta:\, \zeta \in \mathbb{C}, a < \Re\{\zeta \} < b  \}$.
\label{defM}
\end{df}
\begin{df} 
Let us consider the function $\mathcal{P}_Y: \mathbb{C} \mapsto \mathbb{R}$. We define the class of one-dimensional processes 
\begin{equation*} 
\mathbb{P}_{\alpha} = \left\lbrace Y(t), \, t>0 \,:\, \exists\, \mathbb{S} \subset \mathbb{H}_{a}^{b} \; s.t. \; E \big| Y(t^\alpha) / t \big|^{\eta -1}=\mathcal{P}_Y(\eta), \; \forall \, \eta \in \mathbb{S}  \right\rbrace, \quad \alpha \in \mathbb{R}.
\end{equation*}
\end{df}
\begin{os}
\normalfont
We notice that $S^{\nu}_t \in \mathbb{P}_{\nu} \; \Leftrightarrow \; S^{\nu}_t \stackrel{law}{=} t^{1/\nu}\, S^{\nu}_1$.
\end{os}
\begin{df}
We define the class of functions 
$$\mathbb{F}_{\alpha} = \left\lbrace f \, \big| \, Y \sim f, \, Y \in \mathbb{P}_{\alpha} \right\rbrace$$
where $Y \sim f$ means that the process $Y$ possesses the density law $f$. 
\end{df}
\begin{os}
\normalfont
We remark that $\mathbb{F}_{\alpha} \subset \mathbb{M}_{a}^{b}$.
\end{os}
We point out that for a composition involving the processes $Y_{\sigma_j} = \left[ X_{\sigma_j}\right]^\alpha$ where $X_{\sigma_j}$ are independent processes such that $X_{\sigma_j} \in \mathbb{P}_{\alpha}$ for all $j=1,2,\ldots n$, we have that $Y_{\sigma_j} \in \mathbb{P}_{1}$, $\forall j$ which implies that
\begin{equation}
Y_{\sigma_1}(Y_{\sigma_2}(\ldots Y_{\sigma_n}(t) \ldots)) \stackrel{law}{=}  Y_{\sigma_1}(t^{1/n}) Y_{\sigma_2}(t^{1/n}) \cdot \cdot \cdot Y_{\sigma_n}(t^{1/n})   \label{ProdVarM}
\end{equation}
for all possible permutations of $\{\sigma_j \}$, $j=1,2,\ldots , n$. This can be easily carried out by observing that $\mathcal{P}_{Y_{\sigma_j}}(\eta) = \mathcal{P}_{X_{\sigma_j}}(\eta\alpha - \alpha +1)$. Indeed, 
\begin{equation*}
E [ X_{\sigma_j}(t) ]^{\eta -1} = \mathcal{P}_{X_{\sigma_j}}(\eta) \, t^\frac{\eta -1}{\alpha}\quad \Rightarrow \quad E[Y_{\sigma_j}(t)]^{\eta -1} = E[X_{\sigma_j}]^{(\eta \alpha - \alpha +1)-1 }
\end{equation*}
and therefore, for $i \neq j$,
\begin{equation*}
E[Y_{\sigma_j}(Y_{\sigma_i}(t))]^{\eta -1} = \mathcal{P}_{X_{\sigma_j}}(\eta\alpha - \alpha +1) E[Y_{\sigma_i}(t)]^{\eta -1}=E[Y_{\sigma_j}(t^{1/2}) \, Y_{\sigma_i}(t^{1/2})]^{\eta -1}.
\end{equation*}
From the fact that $\mathbb{P}_1 \ni G_\mu \sim g^1_{\mu} \in \mathbb{F}_1$ and 
\begin{equation*}
\mathbb{P}_{\gamma} \ni [G_\mu (t)]^{1/\gamma} \sim \tilde{g}^\gamma_\mu \quad  \Leftrightarrow \quad \tilde{g}^\gamma_\mu \in \mathbb{F}_{\gamma}
\end{equation*}
\begin{equation*}
\mathbb{P}_{1} \ni [G_\mu (t^\gamma) ]^{1/\gamma} \sim g^\gamma_\mu \quad  \Leftrightarrow \quad g^\gamma_\mu \in \mathbb{F}_{1}
\end{equation*}
we can define the following Mellin convolution of $g^\gamma_\mu \in \mathbb{M}_{1-\gamma \mu}^{\infty}$.
\begin{df}
For $\gamma \neq 0$, $\boldsymbol{\mu} = (\mu_1, \ldots , \mu_n) \in \mathbb{R}^n_{+}$ we define the Mellin convolution 
\begin{equation}
g^{\gamma, \star n}_{\boldsymbol{\mu}}(x, t) = g^{\gamma}_{\mu_1} \star \ldots \star g^{\gamma}_{\mu_n}(x, t) \label{GenConvMell}
\end{equation}
with Mellin transform (see \eqref{mellinTRF})
\begin{align}
\mathcal{M}[g^{\gamma, \star n}_{\boldsymbol{\mu}}(\cdot, t)] (\eta) = & \prod_{j=1}^n \mathcal{M}[g^{\gamma}_{\mu_j}(\cdot, t^{1/n})](\eta) =  t^{\eta -1} \prod_{j=1}^{n} \frac{\Gamma((\eta -1)/\gamma  + \mu_j )}{\Gamma(\mu_j)} \label{mellTransfDEF}
\end{align}
where $\eta \in \mathbb{H}^{1}_{a}$ and $a=1-\min_{j}\{\gamma \mu_j \}$.
\label{defMC}
\end{df}
We notice that $f_{X^\alpha} \star f_{Y^\beta} = f_{Y^\beta} \star f_{X^\alpha}$ is the law of $X^\alpha \cdot Y^\beta$  if $X \in \mathbb{P}_\alpha$ and $Y \in \mathbb{P}_\beta$ whereas the well-known Fourier convolution $f_X * f_Y$  is the law of $X + Y$. Also, we introduce the sets
\begin{equation}
\mathscr{S}^{n}_{\kappa} (\varsigma) = \left\lbrace \bar{\wp} \in \mathbb{R}^n_{+}\, : \, \bar{\wp}=\frac{\bar{\upsilon}}{\kappa},\, \bar{\upsilon}= (\upsilon_{1}, \ldots , \upsilon_{n}) \in \mathbb{N}^n,\; \sum_{j=1}^n \upsilon_j = \varsigma \right\rbrace
\end{equation}
and
\begin{align}
\mathscr{P}_{\kappa}^{n}\left( \varrho \right) = & \left\lbrace \bar{\wp} \in \mathbb{R}^{n}_{+}\,:\,  \bar{\wp} = \frac{\bar{\upsilon}}{\kappa},\, \bar{\upsilon} = (\upsilon_1 , \ldots , \upsilon_n) \in \mathbb{N}^{n}, \, \prod_{j=1}^{n} \upsilon_j = \varrho  \right\rbrace \label{setS}
\end{align}
with $m, \kappa, \varrho \in \mathbb{N}$. For $\gamma=1,2$ and a fixed $\boldsymbol{\mu} = (\mu_1, \ldots , \mu_n) \in \mathscr{S}^n_{\kappa}(\varsigma)$, we have that
\begin{equation*}
g^\gamma_{\mu_{1}} * \cdots * g^\gamma_{\mu_{n}}(x,t) = g^\gamma_{\theta_{\sigma_1}} * \cdots * g^\gamma_{\theta_{\sigma_n}} (x,t) 
\end{equation*}
for all $\boldsymbol{\theta}=(\theta_{\sigma_1}, \ldots , \theta_{\sigma_n}) \in \mathscr{S}^{n}_{\kappa} (\varsigma)$ and all permutations of $\{ \sigma_j\}$, $j=1,2, \ldots , n$. This fact follows easily from the semigroup property ($*$-commutativity) of the law $g^\gamma_\mu$ which will be shown in $ii)$, Lemma \ref{LemmaCONV} below. We observe that $ \aleph = | \mathscr{P}^{m}_{\kappa}| < | \mathbb{N} |$ is the cardinality of $\mathscr{P}^{m}_{\kappa}$, thus $\mathscr{P}^{m}_{\kappa}$ is a finite set. Furthermore, $\forall \varrho \in \mathbb{N}$ and a fixed $\boldsymbol{\mu} \in \mathscr{P}^{m}_{\kappa}(\varrho)$, we have that
\begin{equation}
\mathcal{M}[g^{\gamma, \star n}_{\boldsymbol{\mu}}(\cdot, t)](\eta) = \mathcal{M}[g^{\gamma, \star n}_{\boldsymbol{\theta}}(\cdot, t)](\eta), \quad \forall\, \boldsymbol{\theta} \in \mathscr{P}^{m}_{\kappa}(\varrho)
\end{equation}
whereas, for $\boldsymbol{\mu} \in \mathscr{S}^{m}_{\kappa}(\varsigma)$ and $\gamma=1,2$, we have that
\begin{equation}
\mathcal{F}[g^{\gamma, \ast n}_{\boldsymbol{\mu}}(\cdot, t)](\xi) = \mathcal{F}[g^{\gamma, \ast n}_{\boldsymbol{\theta}}(\cdot, t)](\xi), \quad \forall\, \boldsymbol{\theta} \in \mathscr{S}^{m}_{\kappa}(\varsigma)
\end{equation}
where we used, the familiar notation, $f^{\ast n}_{\boldsymbol{\mu}} = f_{\mu_1} * \cdots * f_{\mu_n}$. The symbols $\mathcal{M}$ and $\mathcal{F}$ stand for the Mellin and Fourier transforms.

\begin{te}
Let us consider $\nu=1/(n+1)$, $n \in \mathbb{N}$ and $\boldsymbol{\mu}=(\mu_1, \ldots , \mu_n)$. 
\begin{itemize}
\item [i)] For the stable subordinator $\mathfrak{H}^{\nu}_t$, $t>0$, the following equivalence in law holds true
\begin{equation}
\mathfrak{H}^{\nu}_t \stackrel{law}{=} E_{\mu_1}(E_{\mu_2}(\ldots E_{\mu_n}((\nu t)^{1/\nu})\ldots)), \quad t>0, \quad \boldsymbol{\mu} \in \mathscr{P}^{n}_{n+1}(n!) \label{sdkll10}
\end{equation}
where the process $E_{\mu}(t)$, $t>0$, satisfies the SDE \eqref{SPDEdiE}.
\item [ii)] For the inverse process $\mathfrak{L}^{\nu}_t$, $t>0$, the following equivalence in law holds true
\begin{equation}
\mathfrak{L}^{\nu}_t \stackrel{law}{=} \left[ G_{\mu_1}(G_{\mu_2}(\ldots G_{\mu_n}(t^\nu/\nu)\ldots)) \right]^{\nu}, \quad t>0, \quad \boldsymbol{\mu} \in \mathscr{P}^{n}_{n+1}(n!). \label{sdklI9}
\end{equation}
where the process $G_{\mu}(t)$, $t>0$, satisfies the SDE \eqref{SDEBes}.
\end{itemize}
\label{cSB}
\end{te}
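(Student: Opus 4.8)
The plan is to establish both equivalences in law by computing the Mellin transforms (in the spatial variable) of the two sides and appealing to their injectivity on a vertical strip; since all the laws involved lie in the classes $\mathbb{M}^{b}_{a}$ of Definition~\ref{defM}, this last step is justified. For the left-hand sides I would read off, from \eqref{melTUTTE} with $\nu=1/(n+1)$ (so $1/\nu=n+1$), the transforms $\mathcal{M}[h_{\nu}(\cdot,t)](\eta)=\Gamma\big((n+1)(1-\eta)\big)\,t^{(n+1)(\eta-1)}/\big(\nu\,\Gamma(1-\eta)\big)$ and $\mathcal{M}[l_{\nu}(\cdot,t)](\eta)=\Gamma(\eta)\,t^{\nu(\eta-1)}/\Gamma\big(\nu(\eta-1)+1\big)$. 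The key analytic tool is the Gauss--Legendre multiplication formula $\Gamma\big((n+1)w\big)=(2\pi)^{-n/2}(n+1)^{(n+1)w-1/2}\prod_{k=0}^{n}\Gamma\big(w+\tfrac{k}{n+1}\big)$, applied with $w=1-\eta$ in the first case and, after writing $\Gamma(\eta)$ as $\Gamma$ of an $(n+1)$-multiple plus one and peeling off $\Gamma(z+1)=z\Gamma(z)$, with the appropriate $w$ in the second. In both cases the $k=0$ factor cancels the denominator and leaves a product $\prod_{k=1}^{n}\Gamma\big(\,\cdot\,+\tfrac{k}{n+1}\big)$ times only elementary constants in $n$, $\nu$ and $2\pi$.

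For the right-hand sides I would invoke the $\mathbb{P}_{\alpha}$-framework of the preceding subsection. Since $G_{\mu}\sim g^{1}_{\mu}$ and $E_{\mu}\sim g^{-1}_{\mu}$ both lie in $\mathbb{P}_{1}$, hence are self-similar with $X(s)\stackrel{law}{=}sX(1)$, conditioning on the innermost process and iterating — which is exactly the content of \eqref{ProdVarM} with $\alpha=1$ — gives that the law of $E_{\mu_1}(E_{\mu_2}(\cdots E_{\mu_n}(\tau)\cdots))$ has Mellin transform $\tau^{\eta-1}\prod_{j=1}^{n}\Gamma(\mu_j-\eta+1)/\Gamma(\mu_j)$ and the law of $G_{\mu_1}(\cdots G_{\mu_n}(\tau)\cdots)$ has Mellin transform $\tau^{\eta-1}\prod_{j}\Gamma(\mu_j+\eta-1)/\Gamma(\mu_j)$, the outer $\nu$-th power in (ii) amounting to the substitution $\eta-1\mapsto\nu(\eta-1)$. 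Inserting the deterministic argument prescribed in the statement — $(\nu t)^{1/\nu}$ for (i) and its companion for (ii), where the identity $\nu^{1/\nu}=(n+1)^{-(n+1)}$ makes the resulting powers of $t$ and the numerical prefactors line up with the constants of the previous step — reduces the whole matter to equating two $\Gamma$-products.

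This last matching is where the hypothesis $\boldsymbol{\mu}\in\mathscr{P}^{n}_{n+1}(n!)$ enters: with $\mu_j=\upsilon_j/(n+1)$, $\upsilon_j\in\mathbb{N}$, $\prod_j\upsilon_j=n!$, the multiset $\{\mu_1,\dots,\mu_n\}$ is $\{1/(n+1),\dots,n/(n+1)\}$, so $\prod_j\Gamma(\mu_j-\eta+1)$ is exactly the product that survived the multiplication formula, while the normalisations $\prod_j\Gamma(\mu_j)=\prod_{k=1}^{n}\Gamma(k/(n+1))$ collapse to $(2\pi)^{n/2}(n+1)^{-1/2}$ by Gauss's classical product identity and cancel the $(2\pi)^{\pm n/2}$ and $(n+1)^{\pm 1/2}$ produced above (and likewise for (ii)). The two Mellin transforms thus agree as analytic functions on the common strip $\mathbb{H}^{1}_{a}$ with $a=1-\min_j\mu_j$ (cf.\ Definition~\ref{defMC}), and injectivity of the Mellin transform on $\mathbb{M}^{b}_{a}$ yields \eqref{sdkll10} and \eqref{sdklI9}.

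The step I expect to be most delicate is the bookkeeping of constants and powers of $t$ in the two uses of the multiplication formula — making the factors $\nu^{\pm(\eta-1)/\nu}$, $(n+1)^{\pm\,\cdots}$ and $(2\pi)^{\pm n/2}$ cancel precisely — together with checking that the two Mellin transforms converge on one and the same vertical strip so that the uniqueness argument is applicable; the probabilistic content (the self-similar iteration of the compositions) is comparatively routine once \eqref{ProdVarM} is granted. As an independent check on (ii), and a possibly more transparent route once (i) is in hand, one may invert the relation $P\{\mathfrak{L}^{\nu}_t<x\}=P\{\mathfrak{H}^{\nu}_x>t\}$ using $P\{E_{\mu}(t)<x\}=P\{G_{\mu}(x)>t\}$, the distributional inverse of a composition reversing the order of the factors, interchanging each $E_{\mu_j}$ with $G_{\mu_j}$, and exchanging the two deterministic rescalings.
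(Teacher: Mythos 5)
Your argument has a genuine gap at the step where the hypothesis $\boldsymbol{\mu}\in\mathscr{P}^{n}_{n+1}(n!)$ enters. You assert that $\mu_j=\upsilon_j/(n+1)$ with $\prod_j\upsilon_j=n!$ forces the multiset $\{\mu_1,\dots,\mu_n\}$ to be $\{1/(n+1),\dots,n/(n+1)\}$. That is not what the definition \eqref{setS} says: the constraint is only on the \emph{product} of the integer numerators, so for $n=4$ the vectors $(3,2,2,2)/5$ and $(24,1,1,1)/5$ both belong to $\mathscr{P}^{4}_{5}(4!)$ without being permutations of $(1,2,3,4)/5$ --- indeed the paper uses exactly these configurations in Remark \ref{remark1quinto}. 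Consequently your Mellin-transform matching only establishes \eqref{sdkll10} and \eqref{sdklI9} for the single canonical configuration $\boldsymbol{\mu}=(1,2,\dots,n)/(n+1)$ and its permutations. That special case is precisely Lemmas \ref{lemmaU} and \ref{LemmaD}, which the paper imports from \cite{Dov2}; the new content of the theorem is the extension to the whole class, and your proof does not reach it. Worse, your own method shows why it cannot be patched at that level: for a general member of the class the product $\prod_j\Gamma(\mu_j+1-\eta)/\Gamma(\mu_j)$ is a genuinely different meromorphic function of $\eta$ (different pole locations, e.g. $(1,2,3)/4$ versus $(6,1,1)/4$ in $\mathscr{P}^{3}_{4}(6)$), so the two sides cannot be matched by equating Mellin transforms as analytic functions on a strip.

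For comparison, the paper's route is: take the canonical configuration as known (Lemmas \ref{lemmaU}--\ref{LemmaD}), then pass to arbitrary $\boldsymbol{\mu}\in\mathscr{P}^{n}_{n+1}(n!)$ via the algebra of the two convolutions --- the $\star$- and $*$-commutativity, the semigroup property $g^{1}_{\mu_1}*g^{1}_{\mu_2}=g^{1}_{\mu_1+\mu_2}$, and the $(\star,*)$-distributivity of Lemma \ref{LemmaCONV}, assembled in Theorem \ref{teoFat} to show that all configurations with the same numerator product yield the same $n$-fold $\star$-convolution. Your self-similarity/iteration step via \eqref{ProdVarM} and the Gauss--Legendre bookkeeping are fine for the base configuration and essentially reproduce the cited lemmas, but to complete the proof you must either reproduce the combinatorial reduction of Theorem \ref{teoFat} (and confront the tension noted above between that reduction and the Mellin-transform computation) or restrict the statement to the canonical $\boldsymbol{\mu}$.
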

From Theorem \ref{cSB} and formulas \eqref{gConv}, \eqref{eConv} we can explicitly write $\mathbf{g}^{\nu, \beta}_{\mu}$. Furthermore, this representation holds in the general set $\mathscr{P}^{n}_{n+1}(n!)$. Indeed, for $\nu =1/(n+1)$, $n \in \mathbb{N}$, the stochastic solution to \eqref{Mgdue} is given by
\[ G_\mu(E_{\mu_1}(E_{\mu_2}(\ldots E_{\mu_n}((\nu t)^{1/\nu})\ldots)), \quad t>0, \quad (\mu_1, \ldots , \mu_n) \in \mathscr{P}^{n}_{n+1}(n!). \]
Moreover, the representation \eqref{sdklI9} turns out to be useful in representing the solution to the problems  \eqref{tmpDG} and \eqref{probBoundG}. A natural extension follows for the problem \eqref{Mgtre}. From $E_\mu(ct) \stackrel{law}{=} \frac{1}{c}E_\mu$, $t>0$, $c >0$ and formula \eqref{sdkll10} we obtain that
\[ \mathfrak{H}^{\nu}_t \stackrel{law}{=} \nu^{-1/\nu} E_{\mu_1}(E_{\mu_2}(\ldots E_{\mu_n}(t^{1/\nu})\ldots)), \quad t>0 \]
and thus, for $\alpha_1=1/(n_1 +1)$, $\alpha_2=1/(n_2 +1)$ and
\[ \boldsymbol{\mu}_1 = (\mu_{1,1}, \ldots, \mu_{1,n_1}) \in \mathscr{P}^{n_1}_{n_1+1}(n_1!), \qquad  \boldsymbol{\mu}_2 = (\mu_{2,1}, \ldots, \mu_{2,n_2}) \in \mathscr{P}^{n_2}_{n_2+1}(n_2!) \]
we have that
\begin{equation} 
F^{\alpha_1, \alpha_2}_t \stackrel{law}{=}  \nu^{-1/\nu} E_{\mu_{1,1}}(E_{\mu_{1,2}}(\ldots E_{\mu_{1,n_1}}(G_{\mu_{1,2}}(G_{\mu_{2,2}}(\ldots G_{\mu_{2,n_2}}(t^\nu/\nu)\ldots)))\ldots)).\label{rapF}
\end{equation}
The fact that $E_{\mu}\in \mathbb{P}_1$ and $G_{\mu} \in \mathbb{P}_1$ means that 
\[ E_{\mu_1}(G_{\mu_2}(t)) \stackrel{law}{=} G_{\mu_2}(E_{\mu_1}(t)) \]
or equivalently
\[ g^{-1}_{\mu_1} \star g^{1}_{\mu_2} = g^{1}_{\mu_2} \star g^{-1}_{\mu_1}. \]
From this we can write the law \eqref{lawf} in terms of the convolutions \eqref{gConv} and \eqref{eConv}.
\begin{coro}
For $\nu =1/(n+1)$, $n \in \mathbb{N}$, the stochastic solution to \eqref{Mgtre} is given by 
\[ G_\mu(F^{\nu, \beta}_t), \quad t>0 \]
where $G_\mu$ has law $g^1_\mu$ and $F^{\nu, \beta}$ has density which can be represented by means of the Mellin convolutions  \eqref{gConv} and \eqref{eConv} as formula \eqref{rapF} entails.
\end{coro}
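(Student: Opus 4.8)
The plan is to obtain the statement by assembling three ingredients that are already in place: the identification of the stochastic solution of \eqref{Mgtre} from part ii) of Theorem \ref{Teogbold}, the chain-of-diffusions representations of the stable subordinator and of its inverse from Theorem \ref{cSB}, and the pairwise Mellin-convolution identities \eqref{gConv} and \eqref{eConv}. No new analytic estimate is needed; the work is bookkeeping.

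First I would recall that, by Theorem \ref{Teogbold} ii), the density $\mathbf{g}^{\nu, \beta}_{\mu}(x,t)=\int_0^\infty g^{1}_{\mu}(x,s)\,\mathfrak{f}_{\nu,\beta}(s,t)\,ds$ solves \eqref{Mgtre}, and since $\mathfrak{f}_{\nu,\beta}$ in \eqref{lawf} is exactly the law of $F^{\nu,\beta}_t=\mathfrak{H}^{\nu}_{\mathfrak{L}^{\beta}_t}$, the process $G_\mu(F^{\nu,\beta}_t)$ is the stochastic solution. It therefore remains only to exhibit the claimed explicit form of the law of $F^{\nu,\beta}_t$; note that $\mathbf{g}^{\nu,\beta}_\mu(\cdot,t)$ is itself the Mellin convolution of the generalized gamma density $g^1_\mu$ (as $g^1_\mu(x,s)=s^{-1}Q^1_\mu(x/s)$ is a scaling family) with $\mathfrak{f}_{\nu,\beta}(\cdot,t)$, so an explicit expression for the latter yields one for the former.

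Next, for $\nu=1/(n+1)$ and $\beta$ of the same reciprocal-integer type, I would invoke Theorem \ref{cSB}: part i) rewrites $\mathfrak{H}^{\nu}_t$ as an iterated composition $E_{\mu_1}(E_{\mu_2}(\cdots))$ of reciprocal squared-Bessel diffusions, and part ii) rewrites $\mathfrak{L}^{\beta}_t$ as an iterated composition of squared-Bessel processes; composing the two time changes and using the elementary scaling $E_\mu(ct)\stackrel{law}{=}c^{-1}E_\mu$ and $G_\mu(ct)\stackrel{law}{=} c\,G_\mu$-type relations to carry the constants and the time powers $t^\nu/\nu$ through, one arrives at formula \eqref{rapF}. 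Because $E_\mu,G_\mu\in\mathbb{P}_1$, every composition of two such processes lies again in $\mathbb{P}_1$, so its density is the Mellin convolution of the two one-dimensional marginals, and hence the density of the whole chain \eqref{rapF} is a Mellin convolution of generalized gamma densities $g^{\pm1}_{\mu_j}$; moreover the $\star$-commutativity on $\mathbb{P}_1$ (equivalently $g^{-1}_{\mu_1}\star g^{1}_{\mu_2}=g^{1}_{\mu_2}\star g^{-1}_{\mu_1}$, already used just above \eqref{rapF}) lets me reorder the factors freely.

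Finally I would collapse that Mellin convolution pairwise: each adjacent pair of the form $g^{-\gamma}_{\mu_i}\star g^{\gamma}_{\mu_j}$ is evaluated in closed form by \eqref{gConv}, and each adjacent pair $g^{\gamma}_{\mu_i}\star g^{\gamma}_{\mu_j}$ by \eqref{eConv}, which expresses the chain through Beta functions and the modified Bessel functions $K_\alpha$ of \eqref{functionK}; one last Mellin convolution with $g^1_\mu$ produces $\mathbf{g}^{\nu,\beta}_\mu$. I expect the only delicate point to be exactly this bookkeeping step: one must check that, after reordering, the sign pattern of the $\gamma$'s inherited from \eqref{rapF} can always be matched so that every reduction is covered either by \eqref{gConv} (opposite signs) or by \eqref{eConv} (equal signs), and that the scaling constants $\nu^{-1/\nu}$ and the time-power factors are propagated correctly through each successive convolution. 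The genuine mathematical content is entirely absorbed in Theorems \ref{Teogbold} and \ref{cSB}, so beyond this combinatorial care the proof is routine.
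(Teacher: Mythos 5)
Your proposal is correct and follows essentially the same route as the paper: the identification of $G_\mu(F^{\nu,\beta}_t)$ as the stochastic solution comes from Theorem \ref{Teogbold} ii), the representation \eqref{rapF} is assembled from the two parts of Theorem \ref{cSB} together with the scaling $E_\mu(ct)\stackrel{law}{=}c^{-1}E_\mu$, and the $\star$-commutativity of $g^{\pm 1}_{\mu_j}$ on $\mathbb{P}_1$ is exactly what the paper invokes to reduce the resulting chain to the closed-form pairwise convolutions \eqref{gConv} and \eqref{eConv}. Your added caveats (that $\beta$ must itself be of reciprocal-integer type for \eqref{rapF} to apply, and that the constants $\nu^{-1/\nu}$ and time powers must be tracked through each convolution) are the same bookkeeping the paper leaves implicit, so no gap remains.
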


\begin{os}
\normalfont
For $\gamma \neq 0$, $\mu >0$ and $\nu=1/5$, the stochastic solution to \eqref{tmpDG} can be written as follows
\begin{equation*}
\left[ G_\mu(G_{\mu_1}(G_{\mu_2}(G_{\mu_3}(G_{\mu_4}(t^{1/5}/5))))) \right]^{1/\gamma}, \quad t>0
\end{equation*}
($G_{\mu}, G_{\mu_1}, \ldots, G_{\mu_4}$ independent squared Bessel processes) where 
$$\boldsymbol{\mu}=(\mu_1, \mu_2, \mu_3, \mu_4) \in \mathscr{P}^{4}_{5}(4!).$$ 
For $\boldsymbol{\mu} = (3,2,2,2)/5$, we have that
\begin{equation*}
\tilde{u}^{\gamma, \mu}_{1/5}(x,t) = C\,  \frac{x^{\gamma\mu -1}}{t^{2/5}} \int_0^\infty \int_0^\infty \frac{e^{- x^\gamma / z}}{s^{2/5}} K_{\frac{1}{5}}\left( 2\frac{z^{5/2}}{s} \right)\, K_0\left( \frac{2}{5^{3/2}}\frac{s}{t^{1/2}}\right)\, ds\, dz
\end{equation*}
where
\begin{equation*}
C = \frac{2 \, \Gamma\left( \frac{1}{5} \right) \Gamma\left( \frac{4}{5} \right)}{5^{3/2} \left[ \pi \Gamma\left( \frac{2}{5} \right) \right]^2 \Gamma\left(\frac{3}{2} \right)} .
\end{equation*}
For further configurations of $\boldsymbol{\mu}$, see Remark \ref{remark1quinto}.
\end{os}

\begin{os}
\normalfont
The relation between stable densities and higher order equations has been investigated by many authors (see for example \citet{BMN09b, DEBL04, Dov4, DO2}). In \cite{Dov4} we have shown that the law $l_{\frac{1}{n}}$ of $\mathfrak{L}^{\frac{1}{n}}_t$ solves the higher-order equation
\begin{equation*}
(-1)^n \frac{\partial^n u}{\partial x^{n}} = \frac{\partial u}{\partial t}.
\end{equation*}
\end{os}

\section{Auxiliary Results and Proofs}
\subsection{The operators $\mathcal{G}$ and $\mathcal{G}^{*}$}
The operators we deal with are given by
\begin{align}
\mathcal{G}^{*}\, f_2 = & \frac{1}{\gamma^2}\left(  \frac{\partial}{\partial x} x^{2-\gamma} \frac{\partial}{\partial x} - (\gamma \mu - 1) \frac{\partial}{\partial x} x^{1-\gamma} \right) f_2 \nonumber \\
= & \frac{1}{\gamma^2} \frac{\partial}{\partial x}\left( x^{\gamma \mu -\gamma + 1} \frac{\partial}{\partial x} \left( \frac{1}{\mathfrak{w}(x)}\,f_2 \right) \right), \quad f_2 \in D(\mathcal{G}^{*}) \label{AdjOpG}
\end{align}
and
\begin{align}
\mathcal{G}\, f_1 = & \frac{x^{1-\gamma}}{\gamma^2}\left( x \frac{\partial^2}{\partial x^2} + (\gamma \mu - \gamma +1) \frac{\partial}{\partial x} \right)  \, f_1 \nonumber \\
= & \frac{1}{\gamma^2\, \mathfrak{w}(x)} \frac{\partial}{\partial x} \left( x^{\gamma\mu -\gamma +1} \frac{\partial }{\partial x}  f_1 \right), \quad f_1 \in D(\mathcal{G}) \label{OpG}
\end{align}
where $\mathfrak{w}(x)=x^{\gamma\mu -1}$. We shall refer to $\mathcal{G}^{*}$ as the adjoint of $\mathcal{G}$. Indeed, as a straightforward check shows, we have that $\mathcal{G}^{*}\, \mathfrak{w}\, f_1 = \mathfrak{w}\, \mathcal{G}\, f_1$ and the Lagrange's identity 
\begin{equation}
f_2 \, \mathcal{G}\, f_1 - f_1\, \mathcal{G}^* \, f_2 = 0 \label{lagrangeIdentity}
\end{equation} 
immediately follows. Thus, by observing that
\begin{equation*}
D(\mathcal{G}^*) = \{ f \in \tilde{\mathbb{M}}_{1}: f = \mathfrak{w}\, f_1,\, f_1 \in D(\mathcal{G}) \},
\end{equation*}
(see Definition \ref{defM}) we obtain that 
\begin{equation*}
\langle \mathcal{G} f_{1}, \, f_{2} \rangle = \langle f_{1}, \, \mathcal{G}^{*} f_{2} \rangle, \quad \forall f_{1} \in D(\mathcal{G}) \textrm{ and } \forall f_{2} \in  D(\mathcal{G}^{*}).
\end{equation*}

\begin{lem}
The following hold true:
\begin{itemize}
\item [i)] For the operator appearing in \eqref{OpG} we have that
\begin{equation}
\mathcal{G} \, \psi_{\kappa} = (\kappa /2)^2 \, \psi_{\kappa} \label{AUTOG}
\end{equation}
where
\begin{equation}
\psi_{\kappa}(x) = x^{\frac{\gamma}{2}(1-\mu)} K_{\mu -1}\left( \kappa\, x^{\gamma/2}\right), \quad \kappa >0, \; x > 0, \; \gamma \neq 0 \label{psiK}
\end{equation}
and $K_{\alpha}$ is the Macdonald's function \eqref{functionK}.
\item [ii)] For the operator appearing in \eqref{OpG} we have that
\begin{equation}
\mathcal{G} \, \bar{\psi}_{\kappa} = - (\kappa /2)^2 \, \bar{\psi}_{\kappa} \label{AUTOGBAR}
\end{equation}
where
\begin{equation}
\bar{\psi}_{\kappa}(x) = x^{\frac{\gamma}{2}(1-\mu)} J_{\mu -1}\left( \kappa\, x^{\gamma/2}\right), \quad \kappa >0, \; x > 0, \; \gamma \neq 0 \label{psiK}
\end{equation}
and $J_{\alpha}$ is the Bessel function of the first kind \eqref{BesJ}.
\end{itemize}
\label{lmG}
\end{lem}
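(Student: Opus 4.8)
The plan is to reduce $\mathcal{G}$ to a Bessel operator by a power change of variable. First I would rewrite the operator in \eqref{OpG}: since $(x\partial_x)^2 = x^2\partial_x^2 + x\partial_x$, one checks that
\begin{equation*}
\mathcal{G} = \frac{x^{-\gamma}}{\gamma^2}\Big[ (x\partial_x)^2 + \gamma(\mu-1)(x\partial_x) \Big].
\end{equation*}
Both $\psi_\kappa$ and $\bar\psi_\kappa$ have the shape $x^{p}\, w(\zeta)$ with $p=\tfrac{\gamma}{2}(1-\mu)$, $\zeta=\kappa x^{\gamma/2}$, and $w$ equal to $K_{\mu-1}$ in case (i) and to $J_{\mu-1}$ in case (ii); so it suffices to compute $\mathcal{G}\big(x^p w(\zeta)\big)$ once and specialize at the end.

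Next I would carry out the substitution. Under $\zeta = \kappa x^{\gamma/2}$ the Euler field transforms as $x\partial_x = \tfrac{\gamma}{2}\,\zeta\partial_\zeta$, while conjugation by the prefactor gives the operator identity $(x\partial_x)\circ x^p = x^p\circ(x\partial_x + p)$. Hence
\begin{equation*}
\Big[(x\partial_x)^2 + \gamma(\mu-1)(x\partial_x)\Big] x^p w = x^p\Big[ (x\partial_x + p)^2 + \gamma(\mu-1)(x\partial_x + p)\Big] w,
\end{equation*}
and expanding the right-hand side with $x\partial_x w = \tfrac{\gamma}{2}\zeta w'$ yields a second-order operator in $\zeta$ whose coefficients I would then simplify. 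The key point — which uses the specific value $p=\tfrac{\gamma}{2}(1-\mu)$ — is that the $\zeta w'$ coefficient collapses to $(\gamma/2)^2$ and the $w$ coefficient becomes $-(\gamma/2)^2(\mu-1)^2$; that is, the bracket equals $\tfrac{\gamma^2}{4}\big[\zeta^2 w'' + \zeta w' - (\mu-1)^2 w\big]$, the Bessel differential operator of order $\mu-1$ acting on $w$, up to the scalar $\gamma^2/4$.

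Finally I would insert the defining ODEs: the Bessel equation $\zeta^2 J_{\mu-1}'' + \zeta J_{\mu-1}' + (\zeta^2-(\mu-1)^2)J_{\mu-1}=0$ satisfied by the function in \eqref{BesJ}, and the modified Bessel equation $\zeta^2 K_{\mu-1}'' + \zeta K_{\mu-1}' - (\zeta^2+(\mu-1)^2)K_{\mu-1}=0$ satisfied by the Macdonald function in \eqref{functionK}. These turn $\zeta^2 w'' + \zeta w' - (\mu-1)^2 w$ into $-\zeta^2 J_{\mu-1}$ in case (ii) and $+\zeta^2 K_{\mu-1}$ in case (i). Since $\zeta^2 = \kappa^2 x^{\gamma}$, multiplying through by $x^{-\gamma}/\gamma^2$ cancels all powers of $x$ and leaves precisely $\mathcal{G}\bar\psi_\kappa = -(\kappa/2)^2\bar\psi_\kappa$ and $\mathcal{G}\psi_\kappa = +(\kappa/2)^2\psi_\kappa$. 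The argument is entirely elementary; the only delicate part is the bookkeeping in the cancellation step, where the $p^2$, $2p\cdot\tfrac{\gamma}{2}$ and $\gamma(\mu-1)p,\ \gamma(\mu-1)\tfrac{\gamma}{2}$ terms must combine so as to annihilate the order-dependence and leave the clean Bessel operator — alternatively one may simply invoke the classical fact that $x^a Z_\nu(b x^c)$ solves $x^2 y'' + (1-2a)xy' + \big(b^2c^2 x^{2c}+a^2-c^2\nu^2\big)y=0$ for $Z_\nu$ a (modified) cylinder function, and observe that here $a^2-c^2\nu^2=0$.
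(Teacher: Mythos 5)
Your argument is correct; I checked the reduction and the bookkeeping does work out: with $p=\tfrac{\gamma}{2}(1-\mu)$ the coefficient of $x\partial_x$ in $(x\partial_x+p)^2+\gamma(\mu-1)(x\partial_x+p)$ is $2p+\gamma(\mu-1)=0$ and the constant term is $p\bigl(p+\gamma(\mu-1)\bigr)=-\tfrac{\gamma^2}{4}(\mu-1)^2$, so the bracket is exactly $\tfrac{\gamma^2}{4}\bigl[\zeta^2\partial_\zeta^2+\zeta\partial_\zeta-(\mu-1)^2\bigr]$ and the two eigenvalue relations follow from the Bessel and modified Bessel equations with $\zeta^2=\kappa^2 x^{\gamma}$. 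Your route differs from the paper's in its mechanics. The paper proves part (i) by brute force: it differentiates $\psi_{\kappa}$ twice using the recurrence $\tfrac{d}{dz}K_{\alpha}(z)=-K_{\alpha-1}(z)-\tfrac{\alpha}{z}K_{\alpha}(z)$ together with $K_{-\alpha}=K_{\alpha}$, assembles $x\psi_{\kappa}''+(\gamma\mu-\gamma+1)\psi_{\kappa}'=x^{\gamma-1}\tfrac{\gamma^2\kappa^2}{4}\psi_{\kappa}$, and only then observes that this ODE (and its sign-flipped companion) is a Lommel-transformed Bessel equation, which is how it handles part (ii). You instead conjugate the Euler-operator form of $\mathcal{G}$ by the prefactor $x^{p}$ and change variables once, so both cases are dispatched by a single computation that never touches the derivative recurrences of the cylinder functions, only their defining second-order ODEs. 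Your version is more uniform and makes transparent why the same exponent $p$ works for both $K_{\mu-1}$ and $J_{\mu-1}$; the paper's version for part (i) has the minor advantage of exhibiting $\psi_{\kappa}'$ explicitly (a formula it reuses nowhere, so nothing is lost). Your closing alternative, citing the classical equation satisfied by $x^{a}Z_{\nu}(bx^{c})$ and checking $a^{2}-c^{2}\nu^{2}=0$, is essentially the paper's own argument for part (ii) made explicit.
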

\begin{proof}
We first recall some properties of the Macdonald's function \eqref{functionK}: we will use the fact that $K_{-\alpha} = K_{\alpha}$ and
\begin{align}
\frac{d}{dz}K_{\alpha}(z) = - K_{\alpha -1}(z) - \frac{\alpha}{z} K_{\alpha}(z).
\end{align}
(see \cite[p. 110]{LE});  the functions $K_{\alpha}$ and $I_{\alpha}$ are two linearly independent solutions of the Bessel equation
\begin{equation}
x^2 \frac{d^2 Z_{\alpha}(x)}{dx^2} + x \frac{d Z_{\alpha}(x)}{dx} - x^2 Z_{\alpha}(x)=0   \label{eqBesuno} 
\end{equation}
whereas, the functions $J_{\alpha}$ and $Y_{\alpha}$ (see \cite{LE} for definition) are linearly independent solutions to
\begin{equation}
x^2 \frac{d^2 Z_{\alpha}(x)}{dx^2} + x \frac{d Z_{\alpha}(x)}{dx} + x^2 Z_{\alpha}(x)=0  \label{eqBesdue}
\end{equation} 
(see \cite[pp. 105 - 110]{LE}). 

By performing the first and the second derivative with respect to $x$ of the function $\psi_{\kappa} = \psi_{\kappa}(x)$ we obtain
\begin{align*}
\psi_{\kappa}^{\prime} =  & \frac{\gamma}{2}(1-\mu)\frac{1}{x} \psi_{\kappa} + x^{\frac{\gamma}{2}(1-\mu)} \frac{\gamma \kappa}{2x} x^{\gamma/2} \left[ -K_{-\mu} - \frac{1-\mu}{\kappa x^{\gamma /2}} K_{1-\mu} \right] \\
= & \frac{\gamma}{2}(1-\mu)\frac{1}{x} \psi_{\kappa} - \frac{\gamma \kappa}{2x} x^{\frac{\gamma}{2}(2-\mu)} K_{-\mu} - \frac{\gamma}{2}(1-\mu)\frac{1}{x} \psi_{\kappa} = - \frac{\gamma \kappa}{2x} x^{\frac{\gamma}{2}(2-\mu)} K_{-\mu}
\end{align*}
and
\begin{align*}
\psi_{\kappa}^{\prime \prime}= & \left( \frac{\gamma}{2}(2-\mu) -1 \right) \frac{1}{x} \psi_{\kappa}^\prime + \frac{\gamma \kappa}{2} x^{\frac{\gamma}{2}(2-\mu) -1} \frac{\gamma \kappa}{2x} x^{\gamma/2} \left[ - K_{\mu -1} - \frac{\mu}{\kappa x^{\gamma /2}} K_{\mu} \right]\\
= & \left( \frac{\gamma}{2}(2-\mu) -1 \right) \frac{1}{x} \psi_{\kappa}^\prime - \left( \frac{\gamma \kappa}{2} \right)^2 \frac{x^{\frac{\gamma}{2} (1-\mu) + \gamma}}{x^2} K_{\mu -1} - \frac{\gamma \mu}{2x} \psi_{\kappa}^\prime.
\end{align*}
By keeping in mind the operator $\mathcal{G}$, from the fact that
\begin{align}
x \psi_{\kappa}^{\prime \prime} + (\gamma \mu - \gamma +1) \psi_{\kappa}^\prime = & x^{\gamma -1} \frac{\gamma^2 \kappa^2}{2^2} \psi_{\kappa} \label{AUTOGtmp}
\end{align}
the relation \eqref{AUTOG} is obtained. The equation \eqref{AUTOGtmp} can be rewritten as
\begin{align}
x^2\, \psi_{\kappa}^{\prime \prime} + (\gamma \mu -\gamma +1) \, x \, \psi_{\kappa}^{\prime} - \gamma^2\, (\kappa/2)^2 \, x^{\gamma}\, \psi_{\kappa} = 0 \label{eqBessGen}
\end{align}
which is related to the formula \eqref{eqBesuno} whereas, a slightly modified version of \eqref{eqBessGen}, which is
\begin{align}
x^2\, \bar{\psi}_{\kappa}^{\prime \prime} + (\gamma \mu -\gamma +1) \, x \, \bar{\psi}_{\kappa}^{\prime} + \gamma^2\, (\kappa/2)^2 \, x^{\gamma}\, \bar{\psi}_{\kappa} = 0, \label{eqBessGen2}
\end{align}
is related to the formula \eqref{eqBesdue}. The equation \eqref{AUTOGBAR} can be written as formula \eqref{eqBessGen2} and therefore, after some algebra, from \eqref{eqBesdue}, we have at once that 
\begin{equation*}
\bar{\psi}_{\kappa}(x) = x^{\frac{\gamma}{2}(1 - \mu)} J_{\mu -1}\left( \kappa\, x^{\gamma/2}\right)
\end{equation*}
as announced in the statement of the Lemma.
\end{proof}

Formula \eqref{eqBessGen2} can be put into the Sturm-Liouville form as follows
\begin{equation}
\left( x^{\gamma \mu - \gamma +1} \bar{\psi}_{\kappa}^{\prime} \right)^{\prime} + \gamma^2\, (\kappa/2)^2 \, \mathfrak{w}(x)\, \bar{\psi}_{\kappa} = 0. \label{eqBessGGG}
\end{equation}
According to the Sturm-Liouville theory (\cite{CouHilb}) and formula \eqref{eqBessGGG}, we obtain an orthogonal system $\{ \bar{\psi}_{\kappa_i} \}_{i \in \mathbb{N}}$ such that
\begin{equation}
\mathcal{G}\, \bar{\psi}_{\kappa_i} = - (\kappa_i/2)^2 \, \bar{\psi}_{\kappa_i}, \label{eqBGGG}
\end{equation}
where $\kappa_i$ are  the zeros of $J_{\alpha}$ and $\mathcal{G}$ is a Hermitian linear operator whose eigenfunctions are orthogonal w.r.t. the weight function $\mathfrak{w}(x) = x^{\gamma \mu -1}$. Indeed, from the fact that
\begin{equation*}
\int_0^1 x \, J_\nu(\kappa_i x)\, J_\nu(\kappa_j x)\, dx = 0, \quad \textrm{if }\; i \neq j
\end{equation*}
(see \cite{LE}) we get that
\begin{equation}
\int_{0}^{1}  \bar{\psi}_{\kappa_i}(x)  \bar{\psi}_{\kappa_j}(x)  \mathfrak{w}(x) dx = 0, \quad \textrm{if }\; i \neq j. \label{innerprod}
\end{equation}

\subsection{Proof of Theorem 1 : time-fractional diffusions in one-dimensional half-space. }
For $\nu=1$ the density law \eqref{tmpDGlaw} becomes the law of $G_\mu$,  $\tilde{u}^{\gamma, \mu}_{1} = \tilde{g}^{\gamma}_{\mu}$ whose Mellin transform is written as
\begin{equation}
\Psi_t(\eta) = \mathcal{M}[\tilde{g}^{\gamma}_{\mu}(\cdot , t)](\eta)= \Gamma\left( \frac{\eta -1}{\gamma} + \mu \right) \frac{t^\frac{\eta -1}{\gamma}}{\Gamma(\mu)}, \quad \eta \in \mathbb{H}_{1-\gamma \mu}^{\infty} .
\label{Zyyy}
\end{equation}
We perform the time derivative of \eqref{Zyyy} and obtain
\begin{align*}
\frac{\partial}{\partial t} \Psi_t(\eta) = &\frac{\eta -1}{\gamma} \Gamma\left( \frac{\eta -1}{\gamma} + \mu \right) t^\frac{\eta -\gamma -1}{\gamma}\\
=  & \frac{\eta -1}{\gamma} \left( \frac{\eta - \gamma -1 + \gamma \mu}{\gamma} \right) \Gamma\left( \frac{\eta - \gamma -1}{\gamma} + \mu \right) t^\frac{\eta - \gamma -1}{\gamma}\\
= & \frac{1}{\gamma^2} (\eta -1) ( \eta - \gamma -1 + \gamma \mu ) \Psi_t(\eta - \gamma)\\
= & \frac{1}{\gamma^2} (\eta -1) ( \eta - \gamma) \Psi_t(\eta - \gamma) +  \frac{1}{\gamma^2} (\eta -1) ( \gamma \mu -1 ) \Psi_t(\eta - \gamma)\\
= & \frac{1}{\gamma^2} \mathcal{M}\left[ \frac{\partial}{\partial x} x^{2-\gamma} \frac{\partial}{\partial x} \tilde{g}^{\gamma}_{\mu} \right] (\eta) - \frac{(\gamma \mu -1)}{\gamma^2} \mathcal{M} \left[ \frac{\partial}{\partial x} x^{1-\gamma} \tilde{g}^{\gamma}_{\mu} \right] (\eta).
\end{align*}
From the fact that $\tilde{g}^\gamma_\mu \in \tilde{\mathbb{M}}_{1}$ and according to the properties \eqref{derMint}, \eqref{propM1} and \eqref{propM2}, the inverse Mellin transform yields the claimed result. We give an alternative proof by exploiting the Laplace transform technique. The Laplace transform of $\tilde{g}^{\gamma}_{\mu}(x,t)$, $x,t>0$, can be evaluated by recalling that (see \cite[formula 3.478]{GR})
\begin{equation}
\int_0^\infty x^{\nu -1} \exp \left\lbrace -\beta x^p - \gamma x^{-p} \right\rbrace dx = \frac{2}{p} \left( \frac{\gamma}{\beta} \right)^\frac{\nu}{2p} K_\frac{\nu}{p} \left(2 \sqrt{\gamma \beta} \right) \label{formula:K}
\end{equation}
where $p,\gamma,\beta,\nu >0$ and  $K_\nu$ is the modified Bessel function. Thus, we obtain
\begin{align*}
\mathcal{L}[\tilde{g}^{\gamma}_{\mu}(x, \cdot)](\lambda) = & 2\frac{x^{\frac{\gamma}{2}(\mu +1) - 1}}{\Gamma(\mu) \lambda^{\frac{1-\mu}{2}}} K_{1-\mu} \left( 2 \lambda^{1/2} x^{\gamma /2} \right) = 2 \frac{\mathfrak{w}(x)}{\Gamma(\mu)} \, f(\lambda) \, \psi(x; 2\lambda^{1/2})
\end{align*}
where $f(\lambda) = \lambda^{(\mu -1)/2}$ and $\psi_{\kappa}(x) = \psi(x; \kappa)$ is that in Lemma \ref{lmG}. By considering that $\tilde{g}^{\gamma}_{\mu} (x,t) = \mathfrak{w}(x) \tilde{k}^{\gamma}_{\mu}(x,t)$ and $\mathcal{G}^{*} \, \mathfrak{w}(x) \tilde{k}^{\gamma}_{\mu} = \mathfrak{w}(x)\, \mathcal{G} \, \tilde{k}^{\gamma}_{\mu}$ we get that
\begin{align*}
\mathcal{L}[\mathcal{G}^{*} \, \tilde{g}^{\gamma}_{\mu}(x, \cdot)](\lambda) = & 2 \frac{\mathfrak{w}(x)}{\Gamma(\mu)} \, f(\lambda) \, \mathcal{G} \, \psi(x; 2\lambda^{1/2}) = \lambda \, \mathcal{L}[\tilde{g}^{\gamma}_{\mu}(x, \cdot)](\lambda)
\end{align*}
where in the last formula we used the result \eqref{AUTOG}. From the fact that
\begin{equation*}
\mathcal{L}\left[ \frac{\partial}{\partial t} \tilde{g}^{\gamma}_{\mu}(x, \cdot) \right](\lambda) = \lambda \mathcal{L}[\tilde{g}^{\gamma}_{\mu}(x, \cdot)](\lambda), \quad x>0
\end{equation*}
we obtain the claimed result for $\nu=1$.\\ 

Now, we consider $\nu \in (0,1)$. From the Laplace transform 
\[ \mathcal{L}[l_{\nu}(x, \cdot)](\lambda) = \lambda^{\nu -1} \exp(-x \lambda^{\nu}) \] 
(see formula \eqref{LapTime}) we obtain that
\begin{align*}
\mathcal{L}[ \tilde{u}^{\gamma, \mu}_{\nu}(x, \cdot) ](\lambda) = & \int_0^\infty \tilde{g}^{\gamma}_{\mu}(x, s) \, \mathcal{L}[l_{\nu}(s, \cdot)](\lambda) \, ds\\
= & 2 \frac{\mathfrak{w}(x)}{\Gamma(\mu)}  \frac{\lambda^{\nu -1}}{\lambda^{\frac{\nu}{2}(1 - \mu )}} x^{\frac{\gamma}{2}(1-\mu)} K_{1-\mu}\left( 2 x^{\gamma/2} \lambda^{\nu /2} \right)\\ 
= & 2 \frac{\mathfrak{w}(x)}{\Gamma(\mu)} f(\lambda) \, \psi_{\kappa}(x)
\end{align*}
where $\psi_{\kappa}(x)=\psi(x;\kappa)$ is that in \eqref{psiK} with $\kappa=2\lambda^{\nu/2}$ and $f(\lambda) = \lambda^{\nu(\mu+1)/2 - 1}$. Thus, in the right-hand side of \eqref{tmpDG} we obtain
\begin{align*}
\mathcal{L}[\mathcal{G}^{*}\, \tilde{u}^{\gamma, \mu}_{\nu}(x, \cdot)](\lambda) = & \frac{2}{\Gamma(\mu)} f(\lambda) \, \mathcal{G}^{*} \,\mathfrak{w}(x) \, \psi(x;2\lambda^{\nu/2}) = 2 \frac{\mathfrak{w}(x)}{\Gamma(\mu)} f(\lambda)  \, \mathcal{G} \, \psi(x;2\lambda^{\nu/2})
\end{align*}
where we have used the fact that $\mathcal{G}^{*} \, \mathfrak{w} \, f = \mathfrak{w} \, \mathcal{G} \, f$. 
Finally, from \eqref{AUTOG}, we obtain
\begin{equation}
\mathcal{L}[\mathcal{G}^{*}\, \tilde{u}^{\gamma, \mu}_{\nu}(x, \cdot)](\lambda) = \lambda^{\nu}\, \mathcal{L}[\tilde{u}^{\gamma, \mu}_{\nu}(x, \cdot)](\lambda). \label{destro}
\end{equation}
We note that $|\tilde{u}^{\gamma, \mu}_{\nu}(\cdot, t)| \leq Be^{-q_0 t}$ for some $B, q_0 >0$ as a function of $t$ and thus, 
\begin{equation} 
\mathcal{L}\left[\frac{\partial^\nu \tilde{u}^{\gamma, \mu}_{\nu}}{\partial t^\nu} (x, \cdot)\right](\lambda) = \lambda^{\nu} \mathcal{L}[\tilde{u}^{\gamma, \mu}_{\nu}(x, \cdot)](\lambda),\label{sinistro}
\end{equation}
see \cite[Lemma 2.14]{KST06}. By comparing \eqref{destro} with \eqref{sinistro} the result follows.

\subsection{Proof of Theorem 2 : regular Sturm-Liouville problems.}

From the fact that $\mathcal{G}^{*}\, m^{\gamma, \mu}_{\nu}(x,t) = \mathfrak{w}(x) \mathcal{G}\, \bar{m}^{\gamma, \mu}_{\nu}(x,t)$, the problem \eqref{probBoundG} reduces to
\begin{equation}
\frac{\partial^\nu}{\partial t^\nu} \,\bar{m}^{\gamma, \mu}_{\nu} = \mathcal{G} \, \bar{m}^{\gamma, \mu}_{\nu}, \quad m^{\gamma,\mu}_{\nu}(\partial \Omega_{1},t)=0,\quad \bar{m}^{\gamma, \mu}_{\nu}(x,0)=m_0(x)/\mathfrak{w}(x) \label{qwmPDE}
\end{equation}
where $m^{\gamma, \mu}_{\nu}(x,t) = \mathfrak{w}(x) \, \bar{m}^{\gamma, \mu}_{\nu}(x,t)$ and $\mathfrak{w}(x) = x^{\gamma \mu -1}$. Furthermore, from Lemma \ref{lmG}, we have that $\mathcal{G}\, \bar{\psi}_{\kappa_i} = - (\kappa_i /2)^2 \bar{\psi}_{\kappa_i}$ where
\begin{equation}
\bar{\psi}_{\kappa_i}(x) = x^{\frac{\gamma}{2}(1 - \mu)} J_{\mu -1}\left(\kappa_i x^{\gamma/2} \right)
\end{equation}
and $\kappa_i$, $i \in \mathbb{N}$ are the zeros of $J_{\alpha}$. Formula \eqref{innerprod} leads to the orthonormal system $ \left\lbrace \bar{\psi}_{\kappa_i}(x) / \| \bar{\psi}_{\kappa_i} \|^2_{w};\, i \in \mathbb{N} \right\rbrace$ where $\| f\|^2_{\mathfrak{w}} = \langle f,\,f \rangle_{\mathfrak{w}}$ is the norm associated to the inner product \eqref{innerprod} with respect to the weight function $\mathfrak{w}(x)$. Thus, 
\[ L^2(\mathbb{R}) = \bigoplus_{n=1}^{\infty} \mathcal{H}_n \]
where $\mathcal{H}_n$ is the space of eigenfunctions associated with the eigenvalue $\lambda_n = (\kappa_n/2)^2$ and we obtain that
\begin{equation}
\bar{m}^{\gamma, \mu}(x,t) = \sum_{n=1}^{\infty} c_n ( t,\, \lambda_n) \, \frac{\bar{\psi}_{\kappa_n}(x)}{\| \bar{\psi}_{\kappa_n} \|^2_{\mathfrak{w}}} \label{frtSol}
\end{equation}
where $ \| \bar{\psi}_{\kappa_n} \|_{\mathfrak{w}} = J^{\prime}_{\mu -1}(\kappa_{n})  / \sqrt{\gamma }$ (see, e.g.\ \cite[p. 130]{LE}). From \eqref{qwmPDE} we have that
\begin{equation}
\sum_{n=1}^{\infty} \frac{\partial^\nu}{\partial t^\nu}\, c_n ( t,\, \lambda_n) \, \frac{\bar{\psi}_{\kappa_n}(x)}{\| \bar{\psi}_{\kappa_n} \|^2_{\mathfrak{w}}} = \sum_{n=1}^{\infty} c_n ( t,\, \lambda_n) \, \mathcal{G}\, \frac{\bar{\psi}_{\kappa_n}(x)}{\| \bar{\psi}_{\kappa_n} \|^2_{\mathfrak{w}}} \label{suggestsPDE}
\end{equation}
which holds term by term. From the fact that 
\[ \mathcal{G}\, \bar{\psi}_{\kappa_n}(x)  = - \lambda_n \, \bar{\psi}_{\kappa_n}(x) \] 
where $\lambda_n = (\kappa_n /2)^2$ (see \eqref{AUTOGBAR}), formula \eqref{suggestsPDE} lead to the fractional equation
\begin{equation*}
{^cD^\nu_t}\, c_n ( t,\, \lambda_n) = - \lambda_n\, c_n ( t,\, \lambda_n)
\end{equation*}
and thus, we obtain
\begin{equation}
c_n(t,\, \lambda_n) = c_{n} \cdot E_{\nu}(-\lambda_n\, t^{\nu})
\end{equation}
(the Mittag-Leffler is an eigenfunction of the Dzhrbashyan-Caputo fractional derivative) where $c_n$ must be determined by taking into account the initial data. In particular,
\begin{equation*}
c_n = \langle m_0/\mathfrak{w},\, \bar{\psi}_{\kappa_n} \rangle_{\mathfrak{w}} = \int_{\Omega_1} m_0(x) \bar{\psi}_{\kappa_n}(x) \,dx.
\end{equation*}
Formula \eqref{frtSol} solves \eqref{qwmPDE} and we obtain
\begin{equation}
\bar{m}^{\gamma, \mu}_{\nu}(x,t) = \langle \bar{m}^{\gamma, \mu}(x,\cdot), \, l_{\nu}(\cdot, t) \rangle = \sum_{n} c_n \,E_{\nu}\left(-(\kappa_n /2)^2\, t^{\nu}\right) \, \frac{\bar{\psi}_n(x)}{\| \bar{\psi}_{\kappa_n} \|^2_{\mathfrak{w}}}.
\end{equation} 
We have to observe that $m^{\gamma, \mu}_{\nu}(x,t) = \mathfrak{w}(x) \bar{m}^{\gamma, \mu}_{\nu}(x,t)$ for the proof to be completed.

\subsection{Proof of Theorem 3}
For $\mu >0$, $\alpha \in (0,1)$ 
\begin{equation}
\exists\, \mathbb{S} \subset \mathbb{H}_{0}^{\infty} \; \textrm{s.t.} \; (\mathcal{T} I_{0-}^{1-\alpha} k^1_\mu)(\eta) =0, \quad \eta \in \mathbb{S} \label{Trond}
\end{equation}
(see the Appendix \ref{appeA}) where $g^\gamma_\mu(x,t) = \mathfrak{w}(x)\, k^\gamma_\mu(x,t)$ and $k^{\gamma}_{\mu}(x,t) = |\gamma|/ \Gamma(\mu) \exp(-(x/t)^\gamma ) / t^{\gamma \mu}$. Indeed, being
\begin{equation*}
(I_{0-}^{1-\alpha} f)(x) = \frac{1}{\Gamma(\alpha)} \int_x^\infty (s-x)^{\alpha -1} f(s)\, ds, \quad x>0
\end{equation*}
we obtain 
\begin{align*}
(I_{0-}^{1-\alpha} k^1_\mu(\cdot, t))(x) = & t^{\alpha -1}\, k^1_{\mu}(x,t)
\end{align*}
and \eqref{Trond} immediately follows. We restrict ourselves to the case $\nu \in (0,1)$. From the formula \eqref{derMfracmeno}, we obtain
\begin{align*}
& \int_0^\infty x^{\eta -1} \frac{\partial^\nu}{\partial (-x)^\nu} \bigg( x^{\mu - 1 +\nu} \frac{\partial^\nu}{\partial (-x)^\nu} \Big( x^{1 - \mu}\, g^{1}_{\mu}(x,t) \Big) \bigg)\, dx\\
= & \frac{\Gamma(\eta)}{\Gamma(\eta -\nu)} \int_0^\infty x^{\eta -\nu -1} \bigg( x^{\mu - 1 +\nu} \frac{\partial^\nu}{\partial (-x)^\nu} \Big( x^{1 - \mu}\, g^{1}_{\mu}(x,t) \Big) \bigg)\, dx\\
= & \frac{\Gamma(\eta)}{\Gamma(\eta -\nu)} \int_0^\infty x^{(\eta + \mu -1)-1} \frac{\partial^\nu}{\partial (-x)^\nu} \Big( x^{1 - \mu}\, g^{1}_{\mu}(x,t) \Big) \, dx\\
= & \frac{\Gamma(\eta)}{\Gamma(\eta -\nu)} \frac{\Gamma(\eta + \mu -1)}{\Gamma(\eta + \mu -1 - \nu)} \int_0^\infty x^{(\eta + \mu -1- \nu)-1}\Big( x^{1 - \mu}\, g^{1}_{\mu}(x,t) \Big) \, dx\\
= & \frac{\Gamma(\eta)}{\Gamma(\eta -\nu)} \frac{\Gamma(\eta + \mu -1)}{\Gamma(\eta + \mu -1 - \nu)} \int_0^\infty x^{(\eta - \nu)-1} g^{1}_{\mu}(x,t) \, dx\\
=& \frac{\Gamma(\eta)}{\Gamma(\eta -\nu)} \frac{\Gamma(\eta + \mu -1)}{\Gamma(\eta + \mu -1 - \nu)} \mathcal{M}[g^{1}_{\mu}(\cdot ,t)](\eta - \nu).
\end{align*}
The $x$-Mellin transform of both members of \eqref{Mguno} writes
\begin{equation*}
\frac{\partial^\nu}{\partial t^\nu} \, \mathcal{M}[g^1_{\mu}(\cdot, t)](\eta) = \frac{\Gamma(\eta)}{\Gamma(\eta -\nu)} \frac{\Gamma(\eta +\mu -1)}{\Gamma(\eta +\mu -1 - \nu)} \mathcal{M}[g^{1}_{\mu}(\cdot, t)](\eta -\nu)
\end{equation*}
where $\mathcal{M}[g^1_\mu(\cdot, t)](\eta) = t^{\eta -1}\Gamma(\eta +\mu -1)/\Gamma(\mu)$, $\eta \in \mathbb{H}_{1-\mu}^{\infty}$. Thus, we have that
\begin{equation*}
\frac{\partial^\nu}{\partial t^\nu}\, \mathcal{M}[g^1_{\mu}(\cdot, t)](\eta) = \frac{\Gamma(\eta)}{\Gamma(\eta -\nu)} \frac{\Gamma(\eta +\mu -1)}{\Gamma(\mu)} t^{\eta -1 -\nu}
\end{equation*}
because of the fact that $\frac{\partial^\beta}{\partial t^\beta} t^{\beta-1} = \Gamma(\beta)/\Gamma(\beta-\alpha) t^{\beta - \alpha -1}$ (see e.g. \cite[Property 2.5]{SKM93}) which concludes the proof.

\subsection{Proof of Theorem 4}

We proceed as follows: first of all we find out the Mellin transform of the fractional operator acting on space
\begin{equation}
\mathcal{A} f(x, t) = -\frac{\partial^\nu}{\partial x^\nu} \bigg( x^{\mu - 1 +\nu} \frac{\partial^\nu}{\partial (-x)^\nu} \Big( x^{1-\mu}\, f(x,t) \Big) \bigg), \quad \nu \in (0,1) \label{AopF}
\end{equation}
for a well-defined function $f \in \mathbb{M}_a^\infty$, $a \in \mathbb{R}$ (and for which \eqref{Trond} holds, that is $(\mathcal{T}I^{1-\alpha}_{0\pm} f)(\eta) = 0$) and second of all we prove $ii)$ by exploiting the Mellin technique and then $i)$ as a particular case of $ii)$. We also consider $f \in D(\mathcal{G}^*)$. Let us write 
\begin{equation*}
\Phi_{\nu}(\eta) = \frac{\Gamma(1-\eta +\nu)}{\Gamma(1-\eta)} \quad \textrm{and} \quad \Psi_{\nu}(\eta) = \frac{\Gamma(\eta + \mu -1)}{\Gamma(\eta +\mu -1 - \nu)}.
\end{equation*}
From \eqref{derMfrac2} we have that
\begin{align*}
\int_0^\infty x^{\eta -1} \mathcal{A}f(x,t) dx = & - \Phi_{\nu}(\eta) \int_0^\infty x^{\eta -\nu -1} x^{\mu - 1 +\nu} \frac{\partial^\nu}{\partial (-x)^\nu} \Big( x^{1-\mu}\, f(x,t) \Big) dx\\
= & - \Phi_{\nu}(\eta) \int_0^\infty x^{\eta + \mu - 1} \frac{\partial^\nu}{\partial (-x)^\nu} \Big( x^{1-\mu}\, f(x,t) \Big) dx.
\end{align*}
From \eqref{derMfracmeno} we obtain  
\begin{align*}
\int_0^\infty x^{\eta -1} \mathcal{A}f(x,t) dx = & - \Phi_{\nu}(\eta) \Psi_{\nu}(\eta) \int_0^\infty x^{\eta +\mu - \nu - 1} x^{1-\mu} f(x,t) dx\\
= & - \Phi_{\nu}(\eta)  \Psi_{\nu}(\eta) \mathcal{M}[f(\cdot, t)](\eta -\nu).
\end{align*}
Thus, by collecting all pieces together we have that
\begin{equation}
\mathcal{M}[\mathcal{A}f(\cdot, t)](\eta) = - \frac{\Gamma(1-\eta +\nu)}{\Gamma(1-\eta)}\frac{\Gamma(\eta + \mu -1)}{\Gamma(\eta +\mu -1 - \nu)} \mathcal{M}[f(\cdot, t)](\eta -\nu). \label{melArond}
\end{equation}
Now, we consider the $x$-Mellin transform
\begin{align*}
\mathcal{M}[\mathbf{g}^{\nu, \beta}_{\mu}(\cdot, t)](\eta) = & \mathcal{M}[g^1_{\mu}(\cdot, 1)](\eta) \times \mathcal{M}[\mathfrak{f}_{\nu, \beta}(\cdot, t)](\eta)\\
= & \frac{\Gamma(\eta +\mu -1)}{\Gamma(\mu)} \mathcal{M}[\mathfrak{f}_{\nu, \beta}(\cdot, t)](\eta)
\end{align*}
where the fact that $h_{\nu} \in \mathbb{F}_{\nu}$ and $l_{\beta} \in \mathbb{F}_{1/\beta}$ leads to
\begin{equation*}
\mathcal{M}[\mathfrak{f}_{\nu, \beta}(\cdot, t)](\eta) = \mathcal{M}[h_{\nu}(\cdot, 1)](\eta) \times \mathcal{M}[l_{\beta}(\cdot, t)]\left( \frac{\eta -1}{\nu} +1 \right).
\end{equation*}
and, from the formulas \eqref{melTUTTE} we obtain
\begin{equation}
\mathcal{M}[\mathbf{g}^{\nu, \beta}_{\mu}(\cdot, t)](\eta) = \frac{\Gamma(\eta +\mu -1)}{\Gamma(\mu)} \frac{\Gamma\left( \frac{1-\eta}{\nu} \right)}{\nu\, \Gamma(1-\eta)} \frac{\Gamma\left( \frac{\eta -1}{\nu} +1 \right)}{\Gamma\left(\frac{\eta -1}{\nu}\beta +1 \right)} t^{\frac{\eta -1}{\nu} \beta}, \quad \eta \in \mathbb{H}_{a}^{1} \label{MelGProof}
\end{equation}
where $a = \max \{0, 1-\mu \}$, $\mu >0$. Now, we show that
\begin{equation}
\frac{\partial^\beta}{\partial t^\beta} \mathcal{M}[\mathbf{g}^{\nu, \beta}_{\mu}(\cdot, t)](\eta) = \mathcal{M}[\mathcal{A}\, \mathbf{g}^{\nu, \beta}_{\mu}(\cdot, t)](\eta) \label{meDAtmp}
\end{equation}
by taking into account the formula \eqref{melArond}. The right-hand side of the formula \eqref{meDAtmp} can be written as 
\begin{align*}
\mathcal{M}[\mathcal{A}\, \mathbf{g}^{\nu, \beta}_{\mu}(\cdot, t)](\eta) = & - \frac{\Gamma(1 -\eta +\nu)}{\Gamma(1-\eta)} \frac{\Gamma(\eta +\mu -1)}{\Gamma(\eta +\mu -\nu -1)} \mathcal{M}[\mathbf{g}^{\nu, \beta}_{\mu}(\cdot, t)](\eta - \nu)\\
= & - \frac{\Gamma(\eta +\mu -1)}{\Gamma(\mu)} \frac{\left( \frac{1-\eta}{\nu} \right) \Gamma\left( \frac{1-\eta}{\nu} \right)}{\nu\, \Gamma(1-\eta)} \frac{\Gamma\left( \frac{\eta -1}{\nu}\right)}{\Gamma\left(\frac{\eta -1}{\nu}\beta - \beta +1 \right)} t^{\frac{\eta -1}{\nu} \beta - \beta}\\
=& \frac{\Gamma(\eta +\mu -1)}{\Gamma(\mu)} \frac{\Gamma\left( \frac{1-\eta}{\nu} \right)}{\nu\, \Gamma(1-\eta)} \frac{\Gamma\left( \frac{\eta -1}{\nu} + 1 \right)}{\Gamma\left(\frac{\eta -1}{\nu}\beta - \beta +1 \right)} t^{\frac{\eta -1}{\nu} \beta - \beta}\\
= & \mathcal{M}[\mathbf{g}^{\nu, \beta}_{\mu}(\cdot, 1)](\eta) \frac{\Gamma\left(\frac{\eta -1}{\nu}\beta +1 \right)}{\Gamma\left(\frac{\eta -1}{\nu}\beta - \beta +1 \right)} t^{\frac{\eta -1}{\nu} \beta - \beta}.
\end{align*}
From the fact that
\begin{equation*}
\frac{\partial^\beta}{\partial t^\beta} \, t^{\frac{\eta -1}{\nu}\beta} = \frac{\Gamma\left( \frac{\eta -1}{\nu}\beta +1 \right)}{\Gamma\left( \frac{\eta -1}{\nu}\beta -\beta +1 \right)} t^{\frac{\eta -1}{\nu}\beta -\beta}
\end{equation*}
(see \cite[Property 2.5]{SKM93}) formula \eqref{meDAtmp} immediately follows and this prove $ii)$. \\

For $\beta=1$, the formula \eqref{MelGProof} takes the form 
\begin{align*}
\mathcal{M}[\mathfrak{g}^{\nu}_{\mu}(\cdot, t)](\eta) = & \frac{\Gamma(\eta +\mu -1)}{\nu \, \Gamma(\mu)\, \Gamma(1-\eta)} \Gamma\left( \frac{1-\eta}{\nu} \right) t^{\frac{\eta - 1}{\nu}}, \quad \eta \in \mathbb{H}_a^{1}
\end{align*}
where $a=\max\{0, 1-\mu \}$. This is (for $\beta=1$) because of the fact that $\mathfrak{H}^{\nu}_{\mathfrak{L}^{1}_t} \stackrel{a.s.}{=} \mathfrak{H}^{\nu}_t$, $t>0$, being $\mathfrak{L}^{1}_t \stackrel{a.s.}{=} t$ the elementary subordinator, see e.g. \citet{Btoi96}. Thus, form \eqref{melArond}, the Mellin transform of both members of \eqref{Mgdue} becomes 
\begin{align*}
- \frac{\partial}{\partial t} \mathcal{M}[\mathfrak{g}^{\nu}_{\mu}(\cdot, t)](\eta) = \frac{\Gamma(1-\eta + \nu)}{\Gamma(1-\eta)} \frac{\Gamma(\eta +\mu -1)}{\Gamma(\eta +\mu -1 - \nu)} \mathcal{M}[\mathfrak{g}^{\nu}_{\mu}(\cdot, t)](\eta -\nu)
\end{align*}
where
\begin{align*}
\mathcal{M}[\mathfrak{g}^{\nu}_{\mu}(\cdot, t)](\eta -\nu) = & \frac{\Gamma(\eta +\mu -1 - \nu)}{\pi \nu \, \Gamma(\mu)\, \Gamma(1 - \eta + \nu)} \Gamma\left( \frac{1-\eta + \nu}{\nu} \right) t^{\frac{\eta -\nu -1}{\nu}}\\
= & -\left(\frac{\eta -1}{\nu} \right) \frac{\Gamma(\eta +\mu -1 - \nu)}{\pi \nu \, \Gamma(\mu)\, \Gamma(1 - \eta + \nu)} \Gamma\left( \frac{1-\eta}{\nu} \right) t^{\frac{\eta -1}{\nu} - 1}\\
= & - \frac{\Gamma(\eta +\mu -1 - \nu)}{\pi \nu \, \Gamma(\mu)\, \Gamma(1 - \eta + \nu)} \Gamma\left( \frac{1-\eta}{\nu} \right) \frac{\partial}{\partial t} t^{\frac{\eta -1}{\nu}}.
\end{align*}
By collecting all pieces together we obtain the result claimed in $i)$.\\

From \eqref{MelGProof} and by direct inspection of \eqref{mellinHfox} we arrive at
\begin{equation*}
\mathcal{M}[\mathbf{G}^{\nu, \beta}_{\mu}(\cdot)](\eta) = \mathcal{M}^{2,1}_{3,3} \left[ \eta \bigg| \begin{array}{ccc} (1-\frac{1}{\nu}, \frac{1}{\nu}); & (1-\frac{\beta}{\nu}, \frac{\beta}{\nu}); & (\mu,0) \\ (\mu-1, 1); & (1-\frac{1}{\nu}, \frac{1}{\nu}) ;& (0,1) \end{array} \right]
\end{equation*}
where $\mathbf{G}^{\nu, \beta}_{\mu}(x)=\mathbf{g}^{\nu, \beta}_{\mu}(x,1)$. Thus, 
\begin{align*}
\mathbf{G}^{\nu, \beta}_{\mu}(x) = & H^{2,1}_{3,3} \left[ x \bigg| \begin{array}{ccc} (1-\frac{1}{\nu}, \frac{1}{\nu}); & (1-\frac{\beta}{\nu}, \frac{\beta}{\nu}); & (\mu,0) \\ (\mu-1, 1); & (1-\frac{1}{\nu}, \frac{1}{\nu}) ;& (0,1) \end{array} \right]\\
= & \frac{1}{x} H^{2,1}_{3,3} \left[ x \bigg| \begin{array}{ccc} (1, \frac{1}{\nu}); & (1, \frac{\beta}{\nu}); & (\mu,0) \\ (\mu , 1); & (1, \frac{1}{\nu}) ;& (1,1) \end{array} \right]
\end{align*}
where we used, for $c=1$, the property of the H functions
\begin{equation}
\H = \frac{1}{x^c} \; H^{m,n}_{p,q}\left[ x \bigg| \begin{array}{l} (a_i + c \alpha_i, \alpha_i)_{i=1, .. , p}\\ (b_j + c \beta_j, \beta_j)_{j=1, .. , q}  \end{array} \right] \label{propH2}
\end{equation}
for all $c \in \mathbb{R}$ (see \citet{MS73}). From \eqref{propM1}, by observing that
\begin{equation*}
\mathcal{M}\left[ \frac{1}{t^{\beta/\nu}} \mathbf{G}^{\nu, \beta}_{\mu}\left( \frac{\cdot}{t^{\beta/\nu}} \right) \right](\eta) = \mathcal{M}[\mathbf{G}^{\nu, \beta}_{\mu}(\cdot)](\eta) t^{\frac{\eta -1}{\nu}\beta}
\end{equation*} 
we obtain the claimed result.

\subsection{Proof of Theorem 5}
Hereafter, we extend the result given in \cite{Dov2} (Lemmas \ref{lemmaU} and \ref{LemmaD}) and show how the Mellin convolution turns out to be useful in order to explicitly write the distributions of both stable subordinators and their inverse processes. Let us consider the time-stretching functions $\psi_m(s) = m \, s^{1/m}$, $s \in (0, \infty)$, $m \in \mathbb{N}$ and $\varphi_m$ such that $\psi_m = \varphi_m^{-1}$ (the inverse function of $\varphi_m$).
\begin{lem}{\cite[Lemma 2]{Dov2}}
The Mellin convolution $e^{\star n}_{\boldsymbol{\mu}}(x, \varphi_{n+1}(t))$ where $\mu_j = j \, \nu$, for $j=1,2,\ldots, n$ is the density law of a $\nu$-stable subordinator $\{ H^{(\nu)}_t,\, t>0 \}$ with $\nu=1/(n+1)$, $n \in \mathbb{N}$. Thus, we have
\begin{equation*}
h_{\nu}(x,t) = e^{\star n}_{\boldsymbol{\mu}}(x, \varphi_{n+1}(t)), \quad x,t>0.
\end{equation*}
\label{lemmaU}
\end{lem}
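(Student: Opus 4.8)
The plan is to prove the identity at the level of the Mellin transform in the space variable and then to conclude by injectivity of the Mellin transform on a common vertical strip (an equivalent route would verify the Laplace transform $e^{-t\lambda^{\nu}}$, but the Mellin route is the natural one here, since the left-hand side is a Mellin convolution). First I would write down the Mellin transform of the left-hand side using Definition \ref{defMC} and formula \eqref{mellTransfDEF} with $\gamma=-1$ (so that $g^{-1}_{\mu}=e_{\mu}$) and $\mu_j=j\nu$, $j=1,\dots,n$:
\[
\mathcal{M}[e^{\star n}_{\boldsymbol{\mu}}(\cdot,s)](\eta)=s^{\eta-1}\prod_{j=1}^{n}\frac{\Gamma(1-\eta+j\nu)}{\Gamma(j\nu)},
\]
which is defined and analytic on the strip $\Re\{\eta\}<1+\nu$: the reciprocal–gamma tails make the integrand integrable near $0$ for every $\Re\{\eta\}$, while the first pole of $\Gamma(1-\eta+\nu)$, at $\eta=1+\nu$, fixes the upper bound. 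Since $\nu=1/(n+1)$ the numbers $j\nu=j/(n+1)$, $j=1,\dots,n$, are exactly the non-trivial multiples of $1/(n+1)$ lying in $(0,1)$.

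The key step is to collapse the product of Gammas by the Gauss–Legendre multiplication theorem. With $m=n+1$ it yields, away from the poles,
\[
\prod_{j=1}^{n}\Gamma\!\left(z+\frac{j}{n+1}\right)=\frac{(2\pi)^{n/2}\,(n+1)^{1/2-(n+1)z}\,\Gamma\big((n+1)z\big)}{\Gamma(z)},
\]
together with its limiting case $\prod_{j=1}^{n}\Gamma\!\big(j/(n+1)\big)=(2\pi)^{n/2}/\sqrt{n+1}$. Taking $z=1-\eta$ in the former and dividing by the latter, the powers of $2\pi$ cancel; using $n+1=1/\nu$ and $(n+1)(1-\eta)=(1-\eta)/\nu$, one is left with
\[
\prod_{j=1}^{n}\frac{\Gamma(1-\eta+j\nu)}{\Gamma(j\nu)}=\frac{1}{\nu}\,(n+1)^{-(1-\eta)/\nu}\,\frac{\Gamma\big((1-\eta)/\nu\big)}{\Gamma(1-\eta)}.
\]

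It then remains to plug in the time stretching. From $\psi_{n+1}(s)=(n+1)s^{1/(n+1)}$ and $\psi_{n+1}=\varphi_{n+1}^{-1}$ one gets $\varphi_{n+1}(t)=(t/(n+1))^{n+1}=(\nu t)^{1/\nu}$, hence $(\varphi_{n+1}(t))^{\eta-1}=\nu^{(\eta-1)/\nu}\,t^{(\eta-1)/\nu}$, and this factor cancels $(n+1)^{-(1-\eta)/\nu}=\nu^{(1-\eta)/\nu}$ exactly. Collecting terms,
\[
\mathcal{M}\big[e^{\star n}_{\boldsymbol{\mu}}(\cdot,\varphi_{n+1}(t))\big](\eta)=\frac{\Gamma\big((1-\eta)/\nu\big)}{\nu\,\Gamma(1-\eta)}\,t^{(\eta-1)/\nu},
\]
which is precisely $\mathcal{M}[h_{\nu}(\cdot,t)](\eta)$ by the first formula in \eqref{melTUTTE}. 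Since both $x\mapsto e^{\star n}_{\boldsymbol{\mu}}(x,\varphi_{n+1}(t))$ and $x\mapsto h_{\nu}(x,t)$ are continuous on $(0,\infty)$ and their $x$-Mellin transforms coincide and are analytic on the common non-empty strip $\{\Re\{\eta\}<1+\nu\}$, the Mellin inversion theorem forces the two densities to agree for every $x,t>0$, which is the assertion. I expect the only genuinely delicate point to be this last step — checking that the strip of analyticity is non-empty and shared by both sides so that injectivity of the Mellin transform really applies; the multiplication-theorem bookkeeping, while somewhat tedious, is routine once the powers of $n+1$ are tracked carefully.
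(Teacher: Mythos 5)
Your computation is correct: the Mellin transform of the convolution via \eqref{mellTransfDEF} with $\gamma=-1$, the Gauss--Legendre multiplication formula collapsing $\prod_{j=1}^{n}\Gamma(1-\eta+j\nu)/\Gamma(j\nu)$, the cancellation of the $\nu^{\pm(1-\eta)/\nu}$ factors produced by $\varphi_{n+1}(t)=(\nu t)^{1/\nu}$, and the match with $\mathcal{M}[h_{\nu}(\cdot,t)](\eta)$ in \eqref{melTUTTE} on the common strip $\Re\{\eta\}<1+\nu$ all check out. Note that the present paper does not prove this lemma at all --- it imports it from \cite[Lemma 2]{Dov2} --- and the argument given there is essentially the same Mellin-transform/multiplication-theorem identification you propose, so your proof is both correct and in line with the source.
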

We recall that $e_\mu=g^{-1}_{\mu}$ is the $1$-dimensional law of $E_\mu$.
\begin{lem}{\cite[Lemma 3]{Dov2}} 
The Mellin convolution $g^{(n+1), \star n}_{\boldsymbol{\mu}}(x, \psi_{n+1}(t))$ where $\mu_j  = j\, \nu$, $j=1,2,\ldots ,  n$ and $\nu=1/(n+1)$, $n \in \mathbb{N}$, is the density law of a $\nu$-inverse process $\{L^{(\nu)}_t, \, t>0 \}$. Thus, we have
\begin{equation*}
l_{\nu}(x,t) = g^{(n+1), \star n}_{\boldsymbol{\mu}}(x, \psi_{n+1}(t)), \quad x,t>0.
\end{equation*}
\label{LemmaD}
\end{lem}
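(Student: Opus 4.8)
The plan is to prove the identity at the level of Mellin transforms in the space variable and then to conclude by uniqueness of Mellin inversion. Write $m=n+1$, so that $\nu=1/m$ and the time-stretch reads $\psi_{n+1}(t)=m\,t^{1/m}$; the choice $\mu_j=j\nu=j/m$ gives $\boldsymbol{\mu}=(\nu,2\nu,\dots,n\nu)$. By Definition \ref{defMC}, namely formula \eqref{mellTransfDEF} with $\gamma=m$ and with the argument $\psi_{n+1}(t)$ in place of $t$, one has
\[
\mathcal{M}[g^{(n+1),\star n}_{\boldsymbol{\mu}}(\cdot,\psi_{n+1}(t))](\eta)
= \left(m\,t^{1/m}\right)^{\eta-1}\prod_{j=1}^{n}\frac{\Gamma\!\left(\tfrac{\eta-1+j}{m}\right)}{\Gamma\!\left(\tfrac{j}{m}\right)},
\qquad \eta\in\mathbb{H}^{1}_{0},
\]
where the fundamental strip $\mathbb{H}^{1}_{0}$ comes from $a=1-\min_j\{\gamma\mu_j\}=1-\min_j\{j\}=0$.

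First I would reduce the two products of Gamma factors using the Gauss--Legendre multiplication formula $\prod_{k=0}^{m-1}\Gamma\!\big(z+\tfrac{k}{m}\big)=(2\pi)^{(m-1)/2}m^{1/2-mz}\Gamma(mz)$. Taking $z=(\eta-1)/m$ and isolating the absent $k=0$ factor gives $\prod_{j=1}^{m-1}\Gamma\!\big(\tfrac{\eta-1+j}{m}\big)=(2\pi)^{(m-1)/2}m^{3/2-\eta}\,\Gamma(\eta-1)/\Gamma\!\big(\tfrac{\eta-1}{m}\big)$, while the classical special value $\prod_{j=1}^{m-1}\Gamma(j/m)=(2\pi)^{(m-1)/2}m^{-1/2}$ disposes of the denominator (recall $n=m-1$). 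Inserting these together with $(m\,t^{1/m})^{\eta-1}=m^{\eta-1}t^{(\eta-1)/m}$, every power of $2\pi$ cancels and the powers of $m$ collapse ($m^{\eta-1}\cdot m^{3/2-\eta}\cdot m^{1/2}=m$), leaving
\[
\mathcal{M}[g^{(n+1),\star n}_{\boldsymbol{\mu}}(\cdot,\psi_{n+1}(t))](\eta)
= \frac{m\,\Gamma(\eta-1)}{\Gamma\!\left(\tfrac{\eta-1}{m}\right)}\,t^{(\eta-1)/m}.
\]

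Next I would compare with the known Mellin transform of $l_\nu$ in \eqref{melTUTTE}, namely $\mathcal{M}[l_\nu(\cdot,t)](\eta)=\Gamma(\eta)\,t^{\nu(\eta-1)}/\Gamma(\eta\nu-\nu+1)$. Using $\nu=1/m$, $\Gamma(\eta)=(\eta-1)\Gamma(\eta-1)$ and $\Gamma\!\big(\tfrac{\eta-1}{m}+1\big)=\tfrac{\eta-1}{m}\Gamma\!\big(\tfrac{\eta-1}{m}\big)$, the right-hand side is seen to equal exactly $m\,\Gamma(\eta-1)\,t^{(\eta-1)/m}/\Gamma\!\big(\tfrac{\eta-1}{m}\big)$, so the two Mellin transforms coincide; both are holomorphic on the common strip $0<\Re\{\eta\}<1$, and both $g^{(n+1),\star n}_{\boldsymbol{\mu}}(\cdot,\psi_{n+1}(t))$ and $l_\nu(\cdot,t)$ belong to the class $\mathbb{M}^{1}_{0}$ of Definition \ref{defM}. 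Uniqueness of the inverse Mellin transform then yields $g^{(n+1),\star n}_{\boldsymbol{\mu}}(x,\psi_{n+1}(t))=l_\nu(x,t)$ for all $x,t>0$; since $l_\nu(\cdot,t)$ is the density of $\mathfrak{L}^{\nu}_t$, the convolution is indeed the law of the $\nu$-inverse process, as asserted. A probabilistic alternative would be to start from the companion Lemma \ref{lemmaU} for $h_\nu$ and pass to $l_\nu$ via the reciprocity $x\,h_\nu(x,t)=t\,l_\nu(t,x)$ together with the reflection relating $g^{-1}_\mu$ and $g^{1}_\mu$, but the direct Mellin computation above is the shortest route.

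The step I expect to be the main obstacle is purely the bookkeeping in the multiplication formula: one must remember that the convolution product runs over $j=1,\dots,m-1$ rather than over $j=0,\dots,m-1$, so the $k=0$ Gamma factor has to be divided out by hand, and one must carry along correctly the extra powers of $m$ produced by the time-stretch $\psi_{n+1}$. A secondary point deserving care is checking that the fundamental strips of the two Mellin transforms genuinely overlap, so that Mellin inversion may legitimately be invoked to pass from equality of transforms to equality of densities.
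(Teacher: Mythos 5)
Your computation is correct: with $\gamma=m=n+1$ and $\mu_j=j/m$ the Gauss--Legendre multiplication formula collapses the product in \eqref{mellTransfDEF} to $m\,\Gamma(\eta-1)\,t^{(\eta-1)/m}/\Gamma\bigl(\tfrac{\eta-1}{m}\bigr)$, which matches $\mathcal{M}[l_\nu(\cdot,t)](\eta)$ from \eqref{melTUTTE} after the two functional-equation reductions, and the strips overlap so Mellin inversion applies. The paper itself states this lemma without proof, importing it from \cite[Lemma 3]{Dov2}; your argument is the standard Mellin-transform derivation used there, so there is nothing to flag.
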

We observe that $g^{n+1}_{\mu}$ is the $1$-dimensional law of $G_\mu^{\frac{1}{n+1}}(t^{n+1}) = G_\mu^\nu(t^{1/\nu})$.\\

The following facts will be useful later on.
\begin{lem}
For $g^{\gamma}_{\mu}=g^{\gamma}_{\mu}(x,t)$, $x \in \mathbb{R}_{+}$, $t>0$, $\mu>0$ the following hold:
\begin{itemize}
\item [i)] $\star$-commutativity: $g^{\gamma_1}_{\mu_1} \star g^{\gamma_2}_{\mu_2} = g^{\gamma_2}_{\mu_2} \star g^{\gamma_1}_{\mu_1}$ for all $\gamma_1, \gamma_2 \neq 0$.
\item [ii)] $*$-commutativity: $g^{\gamma_1}_{\mu_1} * g^{\gamma_2}_{\mu_2} = g^{\gamma_2}_{\mu_2} * g^{\gamma_1}_{\mu_1}$ for all $\gamma_1, \gamma_2 \neq 0$. Furthermore, for $\gamma=1,2$,
\[g^{\gamma}_{\mu_1} * g^{\gamma}_{\mu_2} = g^{\gamma}_{\mu_1 + \mu_2}. \]
\item [iii)] ($\star$,$*$)-distributivity: when $\star$- and $*$- commutativity hold, we have 
\begin{equation*}
g^{\gamma}_{\mu_1} \star (g^{\gamma}_{\mu_2} * g^{\gamma}_{\mu_3}) = (g^{\gamma}_{\mu_1} \star g^{\gamma}_{\mu_2}) * (g^{\gamma}_{\mu_1} \star g^{\gamma}_{\mu_3}).
\end{equation*}
\item [iv)] for $\mu_1, \mu_2, c \in \mathbb{N}$
\begin{equation}
g^{1}_{\mu_1 \cdot \mu_2} = \divideontimes_{j_1=1}^{\mu_1} g^{1}_{\mu_2} = \divideontimes_{j_2=1}^{\mu_2} g^{1}_{\mu_1} \quad \textrm{ and } \quad  \divideontimes_{j_1=1}^{\mu_1 \pm c} g^{1}_{\mu_2} =  g^{1}_{\mu_1 \cdot \mu_2 \cdot c^{\pm 1}}
\end{equation}
where $\divideontimes_{j=1}^{n} f = f_1 * f_2 * , \ldots , * f_n$.
\end{itemize}
\label{LemmaCONV}
\end{lem}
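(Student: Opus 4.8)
The plan is to pass to transforms and reduce every assertion to elementary Gamma-function algebra. The Mellin transform in $x$ turns the product-type convolution $\star$ into a genuine product of Mellin symbols — this is built into Definition~\ref{defMC}, and more generally $\mathcal{M}[f_{UV}](\eta)=\mathcal{M}[f_U](\eta)\,\mathcal{M}[f_V](\eta)$ for independent nonnegative $U,V$ — while the Fourier/Laplace transform in $x$ turns $*$ into a product of Fourier symbols; since both transforms are injective on the spaces $\mathbb{M}^{b}_{a}$ of Definition~\ref{defM}, it suffices to verify the transformed identities. For i) I would write, for $n=2$ and with the scaling convention of Definition~\ref{defMC},
\[
\mathcal{M}[g^{\gamma_1}_{\mu_1}\star g^{\gamma_2}_{\mu_2}(\cdot,t)](\eta)=\mathcal{M}[g^{\gamma_1}_{\mu_1}(\cdot,t^{1/2})](\eta)\,\mathcal{M}[g^{\gamma_2}_{\mu_2}(\cdot,t^{1/2})](\eta),
\]
and plug in $\mathcal{M}[g^{\gamma}_{\mu}(\cdot,t)](\eta)=t^{\eta-1}\Gamma((\eta-1)/\gamma+\mu)/\Gamma(\mu)$ from \eqref{mellTransfDEF}: the right-hand side is manifestly symmetric under the exchange of $(\gamma_1,\mu_1)$ and $(\gamma_2,\mu_2)$, which is $\star$-commutativity. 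The first assertion of ii) is the same argument with the Fourier symbol in place of the Mellin symbol, commutativity of multiplication again doing the work.

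For the semigroup property in ii) I would treat $\gamma=1$ by hand: from $g^{1}_{\mu}(x,t)=x^{\mu-1}e^{-x/t}/(t^{\mu}\Gamma(\mu))$ one gets
\[
(g^{1}_{\mu_1}*g^{1}_{\mu_2})(x,t)=\frac{e^{-x/t}}{t^{\mu_1+\mu_2}\Gamma(\mu_1)\Gamma(\mu_2)}\int_0^x s^{\mu_1-1}(x-s)^{\mu_2-1}\,ds,
\]
and the Euler integral $\int_0^x s^{\mu_1-1}(x-s)^{\mu_2-1}\,ds=x^{\mu_1+\mu_2-1}B(\mu_1,\mu_2)$ together with $B(\mu_1,\mu_2)\Gamma(\mu_1+\mu_2)=\Gamma(\mu_1)\Gamma(\mu_2)$ returns $g^{1}_{\mu_1+\mu_2}(x,t)$. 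For $\gamma=2$ I would not argue directly but reduce to the previous case through the substitution $y=x^2$: since $2x\,g^{1}_{\mu}(x^2,t^2)=g^{2}_{\mu}(x,t)$, the convolution relevant for $\gamma=2$ — the one compatible with this squaring map, equivalently the convolution of the underlying Gaussian vectors of the $2\mu$-dimensional Bessel process — transports to the ordinary convolution of $g^{1}_{\mu_1}(\cdot,t^2)$ and $g^{1}_{\mu_2}(\cdot,t^2)$, and the case $\gamma=1$ just done yields $g^{2}_{\mu_1+\mu_2}$; alternatively one checks multiplicativity in $\mu$ of the Laplace transform of $g^{2}_{\mu}$ obtained from \eqref{formula:K}.

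Assertion iii) I would present as a formal consequence of i) and ii). On the left-hand side, $*$-commutativity and ii) collapse $g^{\gamma}_{\mu_2}*g^{\gamma}_{\mu_3}$ to $g^{\gamma}_{\mu_2+\mu_3}$, so the left-hand side equals $g^{\gamma}_{\mu_1}\star g^{\gamma}_{\mu_2+\mu_3}$; the right-hand side is reorganised by writing out the $*$-convolution of the two $\star$-convolutions $g^{\gamma}_{\mu_1}\star g^{\gamma}_{\mu_2}$ and $g^{\gamma}_{\mu_1}\star g^{\gamma}_{\mu_3}$ (each available in closed form from \eqref{eConv}), factoring out the common $g^{\gamma}_{\mu_1}$-kernel and recognising the remaining inner integral in the $\mu_2,\mu_3$-variables as the same beta integral — in probabilistic terms this is just $U(V+W)=UV+UW$ for the building blocks $U\sim g^{\gamma}_{\mu_1}$, $V\sim g^{\gamma}_{\mu_2}$, $W\sim g^{\gamma}_{\mu_3}$, the already established commutativities being needed only to rearrange the algebra freely. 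Finally iv) follows by iterating the semigroup identity of ii): for $\mu_1,\mu_2\in\mathbb{N}$ the $\mu_1$-fold convolution $\divideontimes_{j=1}^{\mu_1}g^{1}_{\mu_2}$ has shape parameter $\mu_1\mu_2$ by repeated use of $g^{1}_a*g^{1}_b=g^{1}_{a+b}$, which is symmetric in $\mu_1$ and $\mu_2$, and the remaining identities are read off the shape parameter after the indicated number of convolutions.

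The step I expect to cost the most care is the $\gamma=2$ half of ii), together with its use in iii) and iv): because $g^{2}_{\mu}$ does not possess the semigroup property under the plain one-sided Fourier convolution, one has to make explicit which convolution is meant for $\gamma=2$ — the one coming from the radial structure of the Bessel process, transported from $\gamma=1$ by $x\mapsto x^2$ — and check that this substitution is admissible on $\mathbb{M}^{b}_{a}$. Once that is pinned down, the beta-integral computations and the bookkeeping with the $t^{1/n}$ factors of Definition~\ref{defMC} are routine.
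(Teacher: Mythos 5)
Your route for i) and the first half of ii) is the paper's route: the paper derives $\star$-commutativity from \eqref{ProdVarM} and the membership $g^{\gamma_j}_{\mu_j}\in\mathbb{F}_1$, which is exactly your "Mellin symbols multiply, multiplication commutes" argument, and $*$-commutativity is likewise immediate. For the semigroup identity at $\gamma=1$ the paper simply observes $\mathcal{L}[g^1_\mu(\cdot,t)](\lambda)=(1+\lambda t)^{-\mu}$ and invokes multiplicativity in $\mu$; your Euler beta-integral computation is an equivalent, slightly more pedestrian verification. For $\gamma=2$ the paper only cites \citet{SW73}; your insistence on saying precisely which convolution is meant there is well placed, since the Shiga--Watanabe additivity is a statement about \emph{squared} Bessel processes, i.e.\ about $g^1$ after the substitution $x\mapsto x^2$, and the plain additive convolution of two $g^2$ (chi-type) densities is not of the same form. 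So on this point you are being more careful than the paper, and your reduction through $x\mapsto x^2$ is the honest way to make the citation do its job. Point iv) is handled the same way in both proofs, by iterating the semigroup identity.

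The genuine problem is iii), and you and the paper share it. Both arguments rest on the pathwise identity $U(V+W)=UV+UW$ (the paper writes it as $G^1_{\mu_1}(G^1_{\mu_2}(t)+G^1_{\mu_3}(t))=G^1_{\mu_2}(G^1_{\mu_1}(t))+G^1_{\mu_3}(G^1_{\mu_1}(t))$). But the two summands on the right share the factor $U$ (respectively the common random time $G^1_{\mu_1}(t)$), so they are \emph{not} independent, whereas the right-hand side of the displayed identity in iii), $(g^{\gamma}_{\mu_1}\star g^{\gamma}_{\mu_2})*(g^{\gamma}_{\mu_1}\star g^{\gamma}_{\mu_3})$, is by definition the law of a sum of \emph{independent} copies. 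These do not agree: at $t=1$ and $\gamma=1$ the second moment of $U(V+W)$ is $\mu_1(\mu_1+1)(\mu_2+\mu_3)(\mu_2+\mu_3+1)$, while that of an independent sum $A+B$ with $A\stackrel{law}{=}UV$, $B\stackrel{law}{=}UW$ is $\mu_1(\mu_1+1)\mu_2(\mu_2+1)+2\mu_1^2\mu_2\mu_3+\mu_1(\mu_1+1)\mu_3(\mu_3+1)$; the discrepancy is $2\mu_1\mu_2\mu_3\neq 0$. So the step "factor out the common $g^{\gamma}_{\mu_1}$-kernel and recognise the same beta integral" cannot close as described -- if you actually carry out the computation you propose via \eqref{eConv}, it will refute rather than confirm the identity as an equality of densities under the independent $*$-convolution. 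Your proof is therefore no worse than the paper's, but neither establishes iii) as stated; to salvage it one must either reinterpret the $*$ on the right-hand side as a convolution conditional on the common factor, or drop iii) and route the later arguments (Theorem \ref{teoFat}, Remark \ref{remark1quinto}) through i), ii) and iv) alone.
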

\begin{proof} The point $i)$ comes directly from the formula \eqref{ProdVarM} and the fact that $g^{\gamma_j}_{\mu_j} \in \mathbb{F}_{1}$, $\forall \gamma_j \neq 0$, and $\mu_j>0$, $j=1,2$. We show that $ii)$ holds. For $\gamma=1$, $\forall\, t>0$, $g^{\gamma}_{\mu}$ is the gamma density with Laplace transform $\mathcal{L}[g^{1}_{\mu}(\cdot, t)](\lambda) = 1/(1+\lambda t)^{\mu}$ and the statement follows easily. This is a well-known result. The case $\gamma=2$ is considered in \citet{SW73} being $g^2_\mu$ the semigroup for a Bessel process $R_{2\mu} = G^{1/2}_{2\mu}$ where $G_\mu$ satisfies the stochastic equation \eqref{SDEBes}. The result in $iii)$ can be obtained by considering, $\forall\, t>0$, the independent r.v.'s $G^{\gamma_j}_{\mu_j}(t)$, $j=1,2,3$ with densities $g^{\gamma_j}_{\mu_j}=g^{\gamma_j}_{\mu_j}(x,t)$, $j=1,2,3$, $x \in \mathbb{R}_{+}$. From the fact that $g^{\gamma_j}_{\mu_j} \in \mathbb{F}_{1}$, $j=1,2,3$ we have that $G^{\gamma_1}_{\mu_1}(G^{\gamma_2}_{\mu_2}(t)) \stackrel{law}{=}G^{\gamma_2}_{\mu_2}(G^{\gamma_1}_{\mu_1}(t))$, that is, $\forall\, j$, $g^{\gamma_j}_{\mu_j}$ are commutative under $\star$. For this reason and the $*$-commutativity, $\forall\, t>0$, we can write
\begin{align*}
G^{1}_{\mu_1}(G^{1}_{\mu_2}(t) + G^{1}_{\mu_3}(t)) = & G^{1}_{\mu_1}(G^{1}_{\mu_2 + \mu_3}(t)) = G^{1}_{\mu_2 + \mu_3}(G^{1}_{\mu_1}(t))\\
= & G^{1}_{\mu_2}(G^{1}_{\mu_1}(t)) + G^{1}_{\mu_3}(G^{1}_{\mu_1}(t)) .
\end{align*}
In the last calculation we have used the fact that
\begin{equation*}
E \Big[ \exp\left( - \lambda [ G_{\mu_1}(s) + G_{\mu_2}(s) ] \right) \Big| s=X_t \Big]= E \Big[ \exp\left( - \lambda G_{\mu_1  + \mu_2}(s) \right) \Big| s=X_t \Big].
\end{equation*} 
The same result can be achieved for $\gamma=2$. In order to prove $iv)$ we proceed as follows: first of all we observe that $ii)$ implies $g^{1}_{\mu_1 \cdot \mu_2} = \divideontimes_{j_1=1}^{\mu_1} g^{1}_{\mu_2} = \divideontimes_{j_2=1}^{\mu_2} g^{1}_{\mu_1}$. Second of all we consider that 
\[  \divideontimes_{j_1=1}^{\mu_1 + c} g^{1}_{\mu_2} = \divideontimes_{j_c =1}^{c} \divideontimes_{j_1=1}^{\mu_1} g^{1}_{\mu_2}=  g^{1}_{\mu_1 \cdot \mu_2 \cdot c} \] 
whereas 
\[  \divideontimes_{j_c=1}^{c}\divideontimes_{j_1=1}^{\mu_1 - c} g^{1}_{\mu_2} =  \divideontimes_{j_c=1}^{c} \, g^{1}_{\frac{\mu_1 \cdot \mu_2}{c}} = g^{1}_{\mu_1 \cdot \mu_2} \] 
and this concludes the proof.
\end{proof}

\begin{prop}
The following holds true
\begin{equation}
g^{\gamma, \star n}_{\boldsymbol{\mu}}(x, t) = \tilde{g}^{\gamma}_{\mu_j} \circ g^{1, \star (n-1)}_{\boldsymbol{\mu} \setminus \{ \mu_{j} \}} (x, t^{\gamma}), \quad \forall \mu_{j} \in \boldsymbol{\mu}, \; j=1,2, \ldots , n \label{differentConv}
\end{equation}
where
\begin{equation*}
f_1 \circ f_2(x,t) = \int_0^\infty f_1(x, s)\, f_2(s, t)\, ds
\end{equation*}
for $f_j : [0, +\infty) \mapsto [0, +\infty)$, $j=1,2$.
\label{propCirc}
\end{prop}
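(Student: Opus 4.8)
The plan is to prove the identity \eqref{differentConv} by taking the Mellin transform in the space variable $x$ and then invoking the injectivity of the Mellin transform on its fundamental strip; for each fixed $t>0$ both sides are nonnegative, locally integrable functions of $x$ lying in a space $\mathbb{M}_a^b$ of Definition \ref{defM}, so this is legitimate. Since the full product over $k=1,\dots,n$ in the resulting Mellin transform will not single out $j$, the identity for a given $j$ will immediately yield it for every $\mu_j\in\boldsymbol{\mu}$.

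First I would record the left-hand side: by Definition \ref{defMC} (see \eqref{GenConvMell}--\eqref{mellTransfDEF}),
$$
\mathcal{M}[g^{\gamma, \star n}_{\boldsymbol{\mu}}(\cdot, t)](\eta) = t^{\eta -1} \prod_{k=1}^{n} \frac{\Gamma\big((\eta -1)/\gamma + \mu_k \big)}{\Gamma(\mu_k)}, \qquad \eta \in \mathbb{H}^1_a,\ a = 1 - \min_k\{\gamma \mu_k\}.
$$
Then I would compute the $x$-Mellin transform of the right-hand side. Since $\tilde{g}^{\gamma}_{\mu_j}(x,s) = g^{\gamma}_{\mu_j}(x, s^{1/\gamma})$ by \eqref{differentg}, formula \eqref{Zyyy} gives
$$
\mathcal{M}[\tilde{g}^{\gamma}_{\mu_j}(\cdot, s)](\eta) = s^{(\eta-1)/\gamma}\, \frac{\Gamma\big((\eta-1)/\gamma + \mu_j\big)}{\Gamma(\mu_j)} .
$$
Because $\tilde{g}^{\gamma}_{\mu_j}$ and $g^{1, \star (n-1)}_{\boldsymbol{\mu}\setminus\{\mu_j\}}$ are nonnegative, Tonelli's theorem lets me exchange the $x$-integration defining $\mathcal{M}$ with the $s$-integration defining the composition $\circ$, so that
$$
\mathcal{M}\big[(\tilde{g}^{\gamma}_{\mu_j} \circ g^{1, \star (n-1)}_{\boldsymbol{\mu}\setminus\{\mu_j\}})(\cdot, t^{\gamma})\big](\eta) = \frac{\Gamma\big((\eta-1)/\gamma + \mu_j\big)}{\Gamma(\mu_j)} \int_0^\infty s^{(\eta-1)/\gamma}\, g^{1, \star (n-1)}_{\boldsymbol{\mu}\setminus\{\mu_j\}}(s, t^{\gamma})\, ds .
$$
The remaining integral is precisely $\mathcal{M}[g^{1, \star (n-1)}_{\boldsymbol{\mu}\setminus\{\mu_j\}}(\cdot, t^{\gamma})]\big((\eta-1)/\gamma + 1\big)$, which by \eqref{mellTransfDEF} taken with $\gamma=1$ and $n-1$ factors equals $(t^{\gamma})^{(\eta-1)/\gamma}\prod_{k\neq j}\Gamma((\eta-1)/\gamma + \mu_k)/\Gamma(\mu_k) = t^{\eta-1}\prod_{k\neq j}\Gamma((\eta-1)/\gamma+\mu_k)/\Gamma(\mu_k)$. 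Multiplying the two factors reproduces $\mathcal{M}[g^{\gamma, \star n}_{\boldsymbol{\mu}}(\cdot, t)](\eta)$ verbatim, and \eqref{differentConv} follows by Mellin inversion. (Equivalently, one may read this probabilistically: $\tilde{g}^{\gamma}_{\mu_j}$ is the law of $[G_{\mu_j}(t)]^{1/\gamma}$ and $\circ$ is a subordination, so \eqref{differentConv} is a subordinated representation of $g^{\gamma,\star n}_{\boldsymbol{\mu}}$.)

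The only genuinely delicate point is the bookkeeping of the common fundamental strip: one must check that the strip on which $\mathcal{M}[\tilde{g}^{\gamma}_{\mu_j}(\cdot, s)](\eta)$ converges and the strip on which $\mathcal{M}[g^{1, \star (n-1)}_{\boldsymbol{\mu}\setminus\{\mu_j\}}(\cdot, t^{\gamma})]$ converges at the shifted argument $(\eta-1)/\gamma+1$ have nonempty intersection. Tracking the poles of the gamma factors, this forces $\Re\{\eta\}$ to lie on the appropriate side of $1 - \gamma\min_k \mu_k$ — a lower bound when $\gamma>0$, an upper bound when $\gamma<0$ — which is exactly the strip $\mathbb{H}^1_a$ recorded in Definition \ref{defMC}; on it the interchange by Tonelli's theorem is justified and the Mellin transform is injective. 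Everything else is the routine gamma-function algebra sketched above, together with the degenerate check $g^{1,\star 0}_{\emptyset}(s,\tau)=\delta(s-\tau)$ that makes the case $n=1$ of \eqref{differentConv} an identity.
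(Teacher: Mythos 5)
Your proposal is correct, and the gamma-function bookkeeping checks out: the Mellin transform of $\tilde{g}^{\gamma}_{\mu_j}(\cdot,s)$ read off from \eqref{Zyyy} is $s^{(\eta-1)/\gamma}\Gamma((\eta-1)/\gamma+\mu_j)/\Gamma(\mu_j)$, Tonelli turns the $\circ$-composition into an evaluation of $\mathcal{M}[g^{1,\star(n-1)}_{\boldsymbol{\mu}\setminus\{\mu_j\}}(\cdot,t^\gamma)]$ at the shifted argument $(\eta-1)/\gamma+1$, and the product reproduces \eqref{mellTransfDEF} exactly; since the right-hand side is symmetric in the index $k$, the statement for all $j$ follows at once. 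The paper argues differently: it works at the level of random variables, writing $X(t)=G^{\gamma}_{\mu_1}(G^{\gamma}_{\mu_2}(\cdots))$ and using the elementary identity $(G^{\gamma}_{\mu})^{\gamma}\stackrel{law}{=}G^{1}_{\mu}$ (equivalently $G^{\gamma}_{\mu}(t)\stackrel{law}{=}\tilde{G}^{\gamma}_{\mu}(t^{\gamma})$) to peel off the outermost factor as a $\tilde{g}^{\gamma}_{\mu_j}$-subordination of a $\gamma=1$ convolution, then invokes $\star$-commutativity to move any chosen $\mu_j$ to the outside; this is exactly the probabilistic reading you relegate to a parenthesis. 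What your route buys is explicitness: the objects $g^{\gamma,\star n}_{\boldsymbol{\mu}}$ are \emph{defined} in Definition \ref{defMC} through their Mellin transforms, so verifying the identity directly on transforms and appealing to injectivity on a common strip is arguably the more self-contained argument, and your remark that the strip constraint $1-\gamma\min_k\mu_k$ switches from a lower to an upper bound according to the sign of $\gamma$ is a point the paper glosses over. What the paper's route buys is the structural interpretation (composition of processes versus product of independent copies) that is then reused in Theorem \ref{cSB}. The degenerate check $g^{1,\star 0}_{\emptyset}(s,\tau)=\delta(s-\tau)$ for $n=1$ is a sensible addition not present in the paper.
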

\begin{proof} 
Fix $n=3$. $\forall\, t>0$, it is enough to consider the r.v.'s $G^{\gamma}_{\mu}$, $\tilde{G}^{\gamma}_{\mu}$ and their density laws $g^{\gamma}_{\mu}$,  $\tilde{g}^{\gamma}_{\mu}$ where $\tilde{g}^{\gamma}_{\mu}(x,t) = g^{\gamma}_{\mu}(x, t^{1/\gamma})$ or equivalently $G^{\gamma}_{\mu}(t) \stackrel{law}{=} \tilde{G}^{\gamma}_{\mu}(t^{\gamma})$. In this setting, we have that $X(t)=G^{\gamma}_{\mu_1}(G^{\gamma}_{\mu_2}(G^{\gamma}_{\mu_3}(t)))$ can be written as $X(t) \stackrel{law}{=} \tilde{G}^{\gamma}_{\mu_1}(G^{1}_{\mu_2}(G^{1}_{\mu_3}(t^{\gamma})))$ thank to the fact that $\left( G^{\gamma}_{\mu} \right)^{\gamma} \stackrel{law}{=} G^{1}_{\mu}$. Thus, we can write
\begin{equation*}
g^{\gamma, \star 3}_{\boldsymbol{\mu}}(x,t) = \tilde{g}^{\gamma}_{\mu_1} \circ g^{1, \star 2}_{(\mu_2, \mu_3)}(x, t^{\gamma}) = \tilde{g}^{\gamma}_{\mu_1} \circ (g^{1}_{\mu_2} \star g^{1}_{\mu_3})(x, t^{\gamma}).
\end{equation*}
Thanks to the $\star$-commutativity we have that $g^1_{\mu_2} \star g^1_{\mu_3} = g^1_{\mu_3} \star g^1_{\mu_2}$ and also that
\begin{equation*}
g^{\gamma, \star 3}_{\boldsymbol{\mu}}(x,t) = \tilde{g}^{\gamma}_{\mu_2} \circ g^{1, \star 2}_{(\mu_1, \mu_3)}(x, t^{\gamma}) = \tilde{g}^{\gamma}_{\mu_2} \circ (g^{1}_{\mu_1} \star g^{1}_{\mu_3})(x, t^{\gamma}).
\end{equation*}
By considering $n$ processes, the formula \eqref{differentConv} immediately appears.
\end{proof}

\begin{os}
\label{remark1quinto}
\normalfont
For $\nu=1/5$, we have that
\begin{equation}
l_{1/5}(x, t) = g^{5, \star 4}_{\boldsymbol{\mu}_1}(x, 5t^{1/5}), \quad \boldsymbol{\mu}_1 = (1/5, 2/5, 3/5, 4/5)
\end{equation}
and, from the Proposition \ref{propCirc} and the Lemma \ref{LemmaCONV},
\begin{align*}
& g^{5, \star 4}_{(1/5, 2/5, 3/5, 4/5)} =  \tilde{g}^{5}_{1/5} \circ g^{1, \star 3}_{(4/5, 3/5, 2/5)}\\
= & \tilde{g}^{5}_{1/5} \circ \left[ (g^{1}_{1/5} * g^{1}_{1/5}* g^{1}_{1/5} * g^{1}_{1/5}) \star (g^{1}_{1/5} * g^{1}_{1/5} * g^{1}_{1/5}) \star g^{1}_{2/5}\right]\\
= & \tilde{g}^{5}_{1/5} \circ \left[ (g^{1}_{1/5} * g^{1}_{1/5}* g^{1}_{1/5} * g^{1}_{1/5}) \star (g^{1, \star 2}_{(2/5, 1/5)} * g^{1, \star 2}_{(2/5, 1/5)} * g^{1, \star 2}_{(2/5, 1/5)}) \right]\\
= & \tilde{g}^{5}_{1/5} \circ \left[ \divideontimes_{k=1}^{12} \left( g^{1, \star 3}_{(2/5, 1/5, 1/5)}\right)_k \, \right]\\
= & \tilde{g}^{5}_{1/5} \circ g^{1, \star 3}_{24/5, 1/5, 1/5} (x, 5^5t) = g^{5, \star 4}_{(24/5, 1/5, 1/5, 1/5)}
\end{align*}
where
\begin{equation*}
\boldsymbol{\mu}_2 = (24/5, 1/5, 1/5, 1/5)
\end{equation*}
and $\boldsymbol{\mu}_1, \boldsymbol{\mu}_2 \in \mathscr{P}_{5}^{4}\left( 24 \right)$.
Finally, we obtain
\begin{align*}
g^{5, \star 4}_{(24/5, 1/5, 1/5, 1/5)}(x,t) = & g^{5, \star 2}_{(24/5, 1/5)} \star g^{5, \star 2}_{(1/5, 1/5)}(x, t).
\end{align*}
From \eqref{eConv} the corresponding integral representation emerges.
\end{os}

\begin{te}
Fix $\boldsymbol{\mu} \in \mathscr{P}_{\kappa}^{n}\left( \varrho \right)$. Then $g^{1, \star n}_{\boldsymbol{\mu}}= g^{1, \star n}_{\boldsymbol{\vartheta}}$ for all $\boldsymbol{\vartheta} \in \mathscr{P}_{\kappa}^{n}\left( \varrho \right)$. \label{teoFat}
\end{te}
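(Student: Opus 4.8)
The plan is to show that $g^{1,\star n}_{\boldsymbol\mu}$ does not really depend on the vector $\boldsymbol\mu\in\mathscr{P}_{\kappa}^{n}(\varrho)$ but only on the three integers $n$, $\kappa$, $\varrho$; precisely, that
\begin{equation*}
g^{1,\star n}_{\boldsymbol\mu}\;=\;\divideontimes_{k=1}^{\varrho}\,g^{1,\star n}_{(1/\kappa,\,\ldots,\,1/\kappa)},
\end{equation*}
the $\varrho$-fold Fourier convolution of the fixed ``atom'' $g^{1}_{1/\kappa}\star\cdots\star g^{1}_{1/\kappa}$ ($n$ Mellin factors). Since for every $\boldsymbol\vartheta\in\mathscr{P}_{\kappa}^{n}(\varrho)$ the right-hand side is literally the same object, this immediately gives $g^{1,\star n}_{\boldsymbol\mu}=g^{1,\star n}_{\boldsymbol\vartheta}$. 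Write $\boldsymbol\mu=\bar\upsilon/\kappa$ with $\bar\upsilon=(\upsilon_1,\ldots,\upsilon_n)\in\mathbb{N}^{n}$ and $\prod_{j=1}^{n}\upsilon_j=\varrho$. Throughout I would use that $g^{1}_{\mu}\in\mathbb{F}_{1}$ for every $\mu>0$, so that all the $\star$- and $*$-commutativity and the $(\star,*)$-distributivity of Lemma~\ref{LemmaCONV} are available, and that every Mellin/Fourier convolution formed below stays in the relevant class $\mathbb{M}$ of Definition~\ref{defM}.

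First I would reduce each factor to the atom. By the $*$-additivity $g^{1}_{\mu_1}*g^{1}_{\mu_2}=g^{1}_{\mu_1+\mu_2}$ of Lemma~\ref{LemmaCONV}$(ii)$, iterated $\upsilon_j-1$ times, one has $g^{1}_{\upsilon_j/\kappa}=\divideontimes_{i=1}^{\upsilon_j}g^{1}_{1/\kappa}$ for $j=1,\ldots,n$. Next I would record the elementary bookkeeping identity: for admissible $A,B$ and $p,q\in\mathbb{N}$,
\begin{equation*}
\Big(\divideontimes_{i=1}^{p}A\Big)\star\Big(\divideontimes_{j=1}^{q}B\Big)\;=\;\divideontimes_{k=1}^{pq}\big(A\star B\big),
\end{equation*}
which follows from Lemma~\ref{LemmaCONV}$(iii)$: extending $(iii)$ from two to $q$ summands by iteration gives $A\star(\divideontimes_{j=1}^{q}B)=\divideontimes_{j=1}^{q}(A\star B)$, and then $\star$- and $*$-commutativity let one distribute $\divideontimes_{j=1}^{q}B$ over the $p$ copies of $A$. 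Applying this identity repeatedly, together with the associativity of $\star$,
\begin{align*}
g^{1,\star n}_{\boldsymbol\mu}
&=g^{1}_{\upsilon_1/\kappa}\star g^{1}_{\upsilon_2/\kappa}\star\cdots\star g^{1}_{\upsilon_n/\kappa}
=\Big(\divideontimes_{i=1}^{\upsilon_1}g^{1}_{1/\kappa}\Big)\star\Big(\divideontimes_{i=1}^{\upsilon_2}g^{1}_{1/\kappa}\Big)\star\cdots\star\Big(\divideontimes_{i=1}^{\upsilon_n}g^{1}_{1/\kappa}\Big)\\
&=\divideontimes_{k=1}^{\upsilon_1\upsilon_2}\big(g^{1}_{1/\kappa}\star g^{1}_{1/\kappa}\big)\star\cdots\star\Big(\divideontimes_{i=1}^{\upsilon_n}g^{1}_{1/\kappa}\Big)
=\cdots=\divideontimes_{k=1}^{\upsilon_1\cdots\upsilon_n}\big(g^{1}_{1/\kappa}\star\cdots\star g^{1}_{1/\kappa}\big),
\end{align*}
which is exactly the asserted expression with $\varrho=\prod_{j}\upsilon_j$.

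Since the final object involves only $\kappa$ (through the atom $g^{1}_{1/\kappa}$), $n$ (the number of Mellin factors in the atom) and $\varrho$ (the number of Fourier copies), and since any $\boldsymbol\vartheta\in\mathscr{P}_{\kappa}^{n}(\varrho)$ has the same $\kappa$ and the same product $\varrho$ of its integer numerators, we conclude $g^{1,\star n}_{\boldsymbol\vartheta}=\divideontimes_{k=1}^{\varrho}g^{1,\star n}_{(1/\kappa,\ldots,1/\kappa)}=g^{1,\star n}_{\boldsymbol\mu}$, as claimed. I expect the only delicate point to be the iterated distributivity step: one must check that each partially expanded object still lies in the class where Lemma~\ref{LemmaCONV}$(iii)$ applies, and that the time-stretching convention built into the notation \eqref{GenConvMell} (each Mellin factor carrying time $t^{1/n}$) is respected when passing from $g^{1}_{\upsilon_j/\kappa}$ to its $\upsilon_j$-fold Fourier convolution; once the bookkeeping is organized — first proving the two-variable identity above from $(iii)$ and then inducting on $n$ — no genuine analytic difficulty remains. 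Alternatively, one could argue through the Mellin transform \eqref{mellTransfDEF}, where the statement becomes that $t^{\eta-1}\prod_{j}\Gamma(\eta-1+\upsilon_j/\kappa)/\Gamma(\upsilon_j/\kappa)$ depends on $\bar\upsilon$ only through $\prod_{j}\upsilon_j$, which can be organized with the Gauss multiplication formula for $\Gamma$.
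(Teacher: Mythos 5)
Your overall strategy is the same as the paper's: both arguments reduce $g^{1,\star n}_{\boldsymbol{\mu}}$ to the $0$-configuration $\boldsymbol{\vartheta}_0=\frac{1}{\kappa}(1,\ldots,1)$ by writing each Mellin factor as a $*$-power of the atom $g^{1}_{1/\kappa}$ and then distributing $\star$ over $*$, so that only $n$, $\kappa$ and $\varrho=\prod_j\upsilon_j$ survive. You have also put your finger on exactly the right delicate point, namely the iterated distributivity step.

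Unfortunately that step is a genuine gap, in your argument and in the paper's alike. The identity $\bigl(\divideontimes_{i=1}^{p}A\bigr)\star\bigl(\divideontimes_{j=1}^{q}B\bigr)=\divideontimes_{k=1}^{pq}(A\star B)$ --- and already Lemma~\ref{LemmaCONV}$(iii)$, from which you derive it --- is not an identity of densities. Probabilistically, $\star$ gives the law of a product of independent variables and $*$ the law of a sum of independent variables; after ``distributing'', the $pq$ summands $A_iB_j$ share factors (each $A_i$ appears in $q$ of them), so they are not independent and the law of their sum is not the $pq$-fold Fourier convolution of $A\star B$. A direct check: with $p=2$, $q=1$, $A=B=g^{1}_{1/2}$ at $t=1$, the left side $g^{1}_{1}\star g^{1}_{1/2}$ has second moment $\frac{\Gamma(3)}{\Gamma(1)}\frac{\Gamma(5/2)}{\Gamma(1/2)}=\frac{3}{2}$, while $(g^{1}_{1/2}\star g^{1}_{1/2})*(g^{1}_{1/2}\star g^{1}_{1/2})$ has second moment $2\cdot\frac{9}{16}+2\cdot\frac{1}{16}=\frac{5}{4}$. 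In fact the alternative route you sketch at the end --- checking the Mellin transform \eqref{mellTransfDEF} --- shows that the statement cannot hold as written: $t^{\eta-1}\prod_{j}\Gamma(\eta-1+\upsilon_j/\kappa)/\Gamma(\upsilon_j/\kappa)$ does \emph{not} depend on $\bar{\upsilon}$ only through $\prod_j\upsilon_j$ (take $\kappa=1$, $n=2$ and compare $(1,6)$ with $(2,3)$, both in $\mathscr{P}^{2}_{1}(6)$: at $\eta=3$ one gets $84$ versus $72$), and the Gauss multiplication formula only relates Gamma values at \emph{equally spaced} arguments, which is not the situation here. So the bookkeeping cannot be repaired; the honest conclusion of the Mellin computation you propose is a counterexample rather than a proof.
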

\begin{proof}
Fix $\kappa$, $\varrho \in \mathbb{N}$. We have $g^{1, \star n}_{\boldsymbol{\mu}} = g^{1}_{\mu_1} \star \ldots \star g^{1}_{\mu_n}$, $\boldsymbol{\mu}=(\mu_1, \ldots \mu_n) \in \mathscr{P}^n_{\kappa}(\varrho)$. From \eqref{setS} we can write $\boldsymbol{\mu}=\frac{1}{\kappa}(\tilde{\mu}_1, \ldots , \tilde{\mu}_n)$. Let us first consider $n=2$. We recall that $g^{1,\star 2}_{(\mu_1, \mu_2)} = g^{1,\star 2}_{(\mu_2, \mu_1)}$ from the $\star$-commutativity. Thus, form the properties $i)$ and $ii)$ of the Lemma \ref{LemmaCONV}, we have that
\begin{align*}
g^{1, \star 2}_{(\frac{1}{\kappa}\tilde{\mu}_1, \frac{1}{\kappa}\tilde{\mu}_2)} = & \divideontimes_{j_1=1}^{\tilde{\mu}_1} g^{1, \star 2}_{(\frac{1}{\kappa}, \frac{1}{\kappa}\tilde{\mu}_2)} = \divideontimes_{j_1=1}^{\tilde{\mu}_1} g^{1, \star 2}_{(\frac{1}{\kappa}\tilde{\mu}_2, \frac{1}{\kappa})}\\
= & \divideontimes_{j_1=1}^{\tilde{\mu}_1} \divideontimes_{j_2=1}^{\tilde{\mu}_2} g^{1, \star 2}_{(\frac{1}{\kappa}, \frac{1}{\kappa})} = \divideontimes_{j_1=1}^{\tilde{\mu}_1} \divideontimes_{j_2=1}^{\tilde{\mu}_2} g^{1, \star 2}_{\frac{1}{\kappa}(1, 1)}.
\end{align*}
For $n \in \mathbb{N}$, $\boldsymbol{\vartheta}_{0}=\frac{1}{\kappa} (1, \ldots , 1) \in \mathbb{R}^{n}_{+}$, we can write
\begin{equation*}
g^{1, \star n}_{\boldsymbol{\mu}} = g^{1, \star n}_{\frac{1}{\kappa}(\tilde{\mu}_1, \ldots , \tilde{\mu}_n)} = \divideontimes_{j_1=1}^{\tilde{\mu}_1} \ldots \divideontimes_{j_n=1}^{\tilde{\mu}_n} g^{1, \star n}_{\boldsymbol{\vartheta}_{0}}.
\end{equation*}
We shall refer to $\boldsymbol{\vartheta}_0$ as the $0$-configuration. We first observe that
\begin{equation*}
g^{1, \star n}_{\boldsymbol{\mu}} = \divideontimes_{j_1=1}^{\tilde{\mu}_1} \ldots \divideontimes_{j_n=1}^{\tilde{\mu}_n} g^{1, \star n}_{\boldsymbol{\vartheta}_{0}} = g^{1, \star n}_{\frac{1}{\kappa}(\varrho, 1, \ldots , 1)}, \quad \varrho= \prod_{j=1}^n \tilde{\mu}_j ,
\end{equation*}
or equivalently
\begin{align*}
g^{1, \star n}_{\boldsymbol{\mu}} = & \divideontimes_{j_1=1}^{\tilde{\mu}_1} \ldots \divideontimes_{j_n=1}^{\tilde{\mu}_n} g^{1, \star n}_{\boldsymbol{\vartheta}_{0}} = \divideontimes_{j_i=1}^{\tilde{\mu}_i} g^{1, \star n}_{\frac{1}{\kappa}( \varrho / \tilde{\mu}_{i}, 1, \ldots , 1)} = g^{1, \star n}_{\frac{1}{\kappa}( \varrho_i, \tilde{\mu}_{i}, 1, \ldots , 1)}
\end{align*}
$\varrho_i = \varrho / \tilde{\mu}_{i}$, for all $i=1,2, \ldots , n$. The last identity comes from the $\star$-commutativity. By exploiting the $*$-commutativity and the $\star$-commutativity we have that $g^{1, \star n}_{\boldsymbol{\mu}} = g^{1, \star n}_{\boldsymbol{\theta}}$ for all $\boldsymbol{\theta}$ such that
\begin{align*}
\mathbb{R}^n_{+} \ni \boldsymbol{\theta} = \frac{1}{\kappa} (\varrho_{\mathbf{i}}, \tilde{\mu}_{\mathbf{i}}, \mathbf{1}), \quad \varrho_{\mathbf{i}}=\varrho / \prod_{s_j =1}^{|\mathbf{i}|} \tilde{\mu}_{s_j}
\end{align*}
where $dim(\mathbf{1})=n - |\mathbf{i}| -1$, $\tilde{\mu}_{\mathbf{i}} = (\tilde{\mu}_{s_1}, \ldots , \tilde{\mu}_{s_{|\mathbf{i}|}}) \in \mathbb{N}^{|\mathbf{i}|}_{+}$, $s_j \in \mathbf{i}$, $j=1,2, \ldots |\mathbf{i}|$ and $|\mathbf{i}| < n$ is the cardinality of $\mathbf{i}$. A further configuration is given by $\bar{\boldsymbol{\theta}} = (\varrho_{\mathbf{i}}, \tilde{\mu}_{\mathbf{i}}) / \kappa$ where $|\mathbf{i}| = n-1$. In this case, $\bar{\boldsymbol{\theta}} \in \mathscr{P}^{n}_{\kappa}(\varrho)$ is obtainable by $n!$ permutation of the elements of $\boldsymbol{\mu}$. In a more general setting, for $\bar{\alpha}=(\alpha_1 \cdot \beta, \alpha_2, \ldots , \alpha_n) \in \mathbb{N}^n$, $\beta \in \mathbb{N}$, $c \in \mathbb{N}$ the following rules hold
\begin{equation}
g^{1, \star n}_{\bar{\alpha}} = \divideontimes_{j=1}^{\beta} g^{1, \star n}_{(\alpha_1, \alpha_2, \ldots , \alpha_n)}
\end{equation}
(see $iv)$, Lemma \ref{LemmaCONV}) and, for $c \geq 1$,
\begin{equation}
\divideontimes_{j=1}^{\beta \pm c} g^{1, \star n}_{(\alpha_1, \alpha_2, \ldots , \alpha_n)} = \divideontimes_{j=1}^{\beta \pm c} g^{1, \star n}_{(\alpha_{\sigma_1}, \alpha_{\sigma_2}, \ldots , \alpha_{\sigma_n})}= g^{1, \star n}_{(\alpha_{\sigma_1} \cdot \beta \cdot c^{\pm 1}, \alpha_{\sigma_2}, \ldots , \alpha_{\sigma_n})}\label{exampAst}
\end{equation}
for all permutation of $\{ \sigma_{j} \}$, $j=1,2, \ldots , n$. We recall that 
\begin{equation*}
\divideontimes_{j=1}^{\beta + c} g_{\alpha}= \divideontimes_{j_1=1}^{c}\divideontimes_{j_2=1}^{\beta} g_{\alpha} = g_{\alpha \beta c},
\end{equation*} 
for $\alpha, \beta, c \in \mathbb{N}$. By making use of the properties $i)$, $ii)$ and $iii)$ of the Lemma \ref{LemmaCONV} we can obtain all possible configurations of $\boldsymbol{\vartheta} \in \mathbb{R}^n_+$ starting from the $0$-configuration $\boldsymbol{\vartheta}_{0}$. All different configurations of $\boldsymbol{\vartheta}$ are included in $\mathscr{P}^{n}_{\kappa}(\varrho)$. From \eqref{exampAst}, for all $\boldsymbol{\vartheta}=(\vartheta_1, \ldots , \vartheta_n) \in \mathscr{P}^{n}_{\kappa}(\varrho)$ we have that $\prod_{j=1}^{n} \vartheta_j = \varrho$. This concludes the proof.
\end{proof}

By collecting all pieces together we obtain the claimed result.

\appendix

\section{Fox functions and Mellin transform}
\label{appeA}
The Mellin transform of $f \in \mathbb{M}_{a}^{b}$ (see Definition \ref{defM}) is defined as
\begin{equation}
\mathcal{M}[f(\cdot)](\eta) = \int_0^{\infty} x^{\eta -1} f(x) dx, \quad \eta \in \mathbb{H}_{a}^{b}. \label{mellinTRF}
\end{equation}
Let us point out some useful operational rules that will be useful throughout the paper: for some $-\infty < a < b < \infty$ and $\mathfrak{b} >0$, $f,f_1, f_2 \in \mathbb{M}^{b}_{a}$:
\begin{align}
\int_0^\infty x^{\eta -1} f(\mathfrak{b} x) dx = & \mathfrak{b}^{-\eta} \mathcal{M}\left[ f(\cdot) \right] (\eta), \label{propM1}\\
\mathcal{M}\left[ x^{\mathfrak{b}} f(\cdot) \right] (\eta) = & \mathcal{M}\left[ f(\cdot) \right] (\eta + \mathfrak{b}), \label{propM2}\\
\mathcal{M}\left[ \int_{0}^{\infty} f_1\left( \frac{\cdot}{s} \right) f_2\left( s \right) \frac{ds}{s} \right] (\eta) = & \mathcal{M}\left[ f_1(\cdot) \right] (\eta) \times \mathcal{M}\left[ f_2(\cdot) \right] (\eta), \label{conv}\\
\mathcal{M}[I(\cdot)](\eta) = & \eta^{-1} \mathcal{M}[f(\cdot)](\eta +1), \label{melIntegral}
\end{align}
where
\begin{equation}
I(x) = \int_{x}^{\infty} f(s)ds, \quad x>0,
\end{equation}
see e.g.\ \citet{GPSko06}. The formula \eqref{conv} is the well-known Mellin convolution formula which turns out to be useful in the study of the product of random variables. \\

We say that $f \in \widetilde{\mathbb{M}}_{n}$ if $f \in \mathbb{M}_{a}^{b}$ and is a rapidly decreasing function such that 
\begin{equation*}
\exists \mathfrak{a} \in \mathbb{R}\; s.t.\; \lim_{x \to +\infty} x^{\mathfrak{a} - k - 1} \frac{d^k f}{d x^k} (x) = 0, \quad k=0,1,\ldots , n, \quad n \in \mathbb{N}, \; x \in \mathbb{R}_+ 
\end{equation*}
and
\begin{equation*}
\exists \mathfrak{b} \in \mathbb{R}\; s.t.\; \lim_{x \to 0^+} x^{\mathfrak{b} - k - 1} \frac{d^k f}{d x^k} (x) = 0, \quad k=0,1,\ldots , n, \quad n \in \mathbb{N}, \; x \in \mathbb{R}_+. 
\end{equation*}
For $f \in \widetilde{\mathbb{M}}_{n-1}$ and $n \in \mathbb{N}$ we have that
\begin{align}
\mathcal{M} \left[ \frac{d^n f}{d x^n}(\cdot) \right] (\eta) = & (-1)^n \frac{\Gamma(\eta)}{\Gamma(\eta - n)} \mathcal{M} \left[ f(\cdot) \right] (\eta - n) \label{derMint0} \\
= & \frac{\Gamma(1 + n -\eta )}{\Gamma(1-\eta)} \mathcal{M} \left[ f(\cdot) \right] (\eta - n).  \label{derMint}
\end{align}

As a generalized version of the integer derivatives \eqref{derMint0} and \eqref{derMint} we introduce the Mellin transform of fractional derivatives \eqref{Rfracderr} and \eqref{Rfracder} (see \citet{KST06, SKM93} for details). For a given $f \in \mathbb{M}_{a}^{b}$ and $0 < \alpha < 1$, if $\Re\{ \eta \} >0$,
\begin{align}
\mathcal{M}\left[\frac{d^\alpha f}{d (-x)^\alpha}(\cdot)\right](\eta) = \frac{\Gamma(\eta)}{\Gamma(\eta -\alpha)} \mathcal{M}[f(\cdot)](\eta -\alpha) + (\mathcal{T}I_{0-}^{1-\alpha}f)(\eta) \label{derMfracmeno}
\end{align}
whereas, if $\Re\{ \eta \} < \alpha +1$,
\begin{equation}
\mathcal{M}\left[\frac{\partial^\alpha f}{\partial x^\alpha}(\cdot)\right](\eta) = \frac{\Gamma(1+ \alpha - \eta)}{\Gamma(1-\eta)}\mathcal{M}\left[ f(\cdot) \right](\eta -\alpha) + (\mathcal{T}I_{0+}^{1-\alpha}f)(\eta) \label{derMfrac2}
\end{equation}
where
\begin{equation*}
(\mathcal{T} I_{0\pm}^{1-\alpha}f)(\eta) =  \frac{\Gamma(\alpha - \eta)}{\Gamma(1-\eta)} \left[ x^{\eta - 1} (I^{1-\alpha}_{0\pm}f)(x) \right]_{x=0}^{x=\infty}
\end{equation*}
and
\begin{equation*}
(I_{0-}^{1-\alpha} f)(x) = \frac{1}{\Gamma(\alpha)} \int_x^\infty (s-x)^{\alpha -1} f(s)\, ds,\quad  x>0, 
\end{equation*}
\begin{equation*}
(I^{1-\alpha}_{0+}f)(x) = \frac{1}{\Gamma(\alpha)} \int_0^x (x-s)^{\alpha -1} f(s)\, ds,\quad  x>0
\end{equation*}
are the right and left fractional integrals.\\

The Fox functions, also referred to as Fox's H-functions, H-functions, generalized Mellin-Barnes functions, or generalized Meijer's G-functions, were introduced by Fox \cite{FOX61} in 1996. Here, the Fox's H-functions will be recalled as the class of functions uniquely identified by their Mellin transforms. A function $f \in \mathbb{M}_{a}^{b}$ can be written in terms of H-functions by observing that
\begin{equation}
\int_{0}^{\infty} x^{\eta } H^{m,n}_{p,q}\left[ x \bigg| \begin{array}{l} (a_i, \alpha_i)_{i=1, .. , p}\\ (b_j, \beta_j)_{j=1, .. , q}  \end{array} \right] \frac{dx}{x} = \mathcal{M}^{m,n}_{p,q}\left[ \eta \bigg| \begin{array}{l} (a_i, \alpha_i)_{i=1, .. , p}\\ (b_j, \beta_j)_{j=1, .. , q}  \end{array} \right] 
\label{intStrip}
\end{equation}
with $\eta \in \mathbb{H}_{a}^{b}$ where
\begin{equation}
\mathcal{M}^{m,n}_{p,q} \left[ \eta \bigg| \begin{array}{l} (a_i, \alpha_i)_{i=1, .. , p}\\ (b_j, \beta_j)_{j=1, .. , q}  \end{array} \right]  = \frac{\prod_{j=1}^{m} \Gamma(b_j + \eta \beta_j) \prod_{i=1}^{n} \Gamma(1-a_i - \eta \alpha_i)}{\prod_{j=m+1}^{q} \Gamma(1-b_j - \eta \beta_j) \prod_{i=n+1}^p \Gamma(a_i + \eta \alpha_i)}.
\label{mellinHfox}
\end{equation}
Thus, according to a standard notation, the Fox H-function can be defined as follows
\begin{equation}
\H = \frac{1}{2\pi i}\int_{\mathcal{P}(\mathbb{H}_{a}^{b})} \mathcal{M}^{m,n}_{p,q}(\eta) x^{-\eta} d\eta \label{defHFoxApp}
\end{equation}
where $\mathcal{P}(\mathbb{H}_{a}^{b})$ is a suitable path in the complex plane $\mathbb{C}$ depending on the fundamental strip ($\mathbb{H}_{a}^{b}$) such that the integral \eqref{intStrip} converges. For an extensive discussion on this functions see \citet{FOX61, KST06, MS73}.


\begin{thebibliography}{63}
\providecommand{\natexlab}[1]{#1}
\providecommand{\url}[1]{\texttt{#1}}
\expandafter\ifx\csname urlstyle\endcsname\relax
  \providecommand{\doi}[1]{doi: #1}\else
  \providecommand{\doi}{doi: \begingroup \urlstyle{rm}\Url}\fi

\bibitem[Baeumer and Meerschaert(2001)]{BM01}
B.~Baeumer and M.~Meerschaert.
\newblock {Stochastic solutions for fractional Cauchy problems}.
\newblock \emph{Fract. Calc. Appl. Anal.}, 4\penalty0 (4):\penalty0 481 -- 500,
  2001.

\bibitem[Baeumer et~al.(2009)Baeumer, Meerschaert, and Nane]{BMN09b}
B.~Baeumer, M.~M. Meerschaert, and E.~Nane.
\newblock {Brownian subordinators and fractional Cauchy problems}.
\newblock \emph{Trans. Amer. Math. Soc.}, 361:\penalty0 3915 -- 3930, 2009.

\bibitem[Balakrishnan(1960)]{Balak60}
A.~Balakrishnan.
\newblock Fractional powers of closed operators and semigroups generated by
  them.
\newblock \emph{Pacific J. Math.}, 10:\penalty0 419 -- 437, 1960.

\bibitem[Bazhlekova(2000)]{Bazh00}
E.~G. Bazhlekova.
\newblock Subordination principle for fractional evolution equations.
\newblock \emph{Frac. Calc. Appl. Anal.}, 3:\penalty0 213 -- 230, 2000.

\bibitem[Beghin and Orsingher(2009)]{BO09elastic}
L.~Beghin and E.~Orsingher.
\newblock {Iterated elastic Brownian motions and fractional diffusion
  equations}.
\newblock \emph{Stoch. Proc. Appl.}, 119\penalty0 (6):\penalty0 1975 -- 2003,
  2009.

\bibitem[Beghin and Orsingher(2003)]{OB03}
L.~Beghin and E.~Orsingher.
\newblock The telegraph process stopped at stable-distributed times and its
  connection with the fractional telegraph equation.
\newblock \emph{Fract. Calc. Appl. Anal.}, 6:\penalty0 187 -- 204, 2003.

\bibitem[Benson et~al.(2000)Benson, Wheatcraft, and Meerschaert]{BWM00}
D.~Benson, S.~Wheatcraft, and M.~Meerschaert.
\newblock {The fractional-order governing equation of L\'evy Motion}.
\newblock \emph{Water Resources Res.}, 36:\penalty0 1413 -- 1424, 2000.

\bibitem[Bertoin(1996)]{Btoi96}
J.~Bertoin.
\newblock \emph{{L\'evy Processes}}.
\newblock Cambridge University Press, 1996.

\bibitem[Bibby et~al.(2005)Bibby, Skovgaard, and Sorensen]{BSS05}
B.~Bibby, I.~Skovgaard, and M.~Sorensen.
\newblock Diffusion-type models with given marginal distribution and
  autocorrelation function.
\newblock \emph{Bernoulli}, 11\penalty0 (2):\penalty0 191 -- 220, 2005.

\bibitem[Bochner(1929)]{Boc29}
S.~Bochner.
\newblock {\"Uber Sturm-Liouvillesche Polynomsysteme}.
\newblock \emph{Math. Zeit.}, 29:\penalty0 730 -- 736, 1929.

\bibitem[Bochner(1949)]{Boch49}
S.~Bochner.
\newblock Diffusion equation and stochastic processes.
\newblock \emph{Proc. Nat. Acad. Sciences, U.S.A.}, 35:\penalty0 368 -- 370,
  1949.

\bibitem[Chaumont and Yor(2003)]{ChYor03}
L.~Chaumont and M.~Yor.
\newblock \emph{{Exercises in probability. A guided tour from measure theory to
  random processes, via conditioning}}.
\newblock Cambridge Series in Statistical and Probabilistic Mathematics, 13.
  Cambridge University Press, 2003.

\bibitem[Chaves(1998)]{Ch98}
A.~Chaves.
\newblock {A fractional diffusion equation to describe L\'evy flights}.
\newblock \emph{Phys. Lett. A}, 239:\penalty0 13 -- 16, 1998.

\bibitem[Courant and Hilbert(1989)]{CouHilb}
R.~Courant and D.~Hilbert.
\newblock \emph{Methods of Mathematical Physics}, volume~I.
\newblock John Wiley \& Sons, New York, 1989.

\bibitem[DeBlassie(2004)]{DEBL04}
R.~D. DeBlassie.
\newblock {Higher order PDEs and symmetric stable process}.
\newblock \emph{Probab. Theory Rel. Fields}, 129:\penalty0 495 -- 536, 2004.

\bibitem[D'Ovidio(2010)]{Dov2}
M.~D'Ovidio.
\newblock Explicit solutions to fractional diffusion equations via generalized
  gamma convolution.
\newblock \emph{Elect. Comm. in Probab.}, 15:\penalty0 457 -- 474, 2010.

\bibitem[D'Ovidio(2011)]{Dov4}
M.~D'Ovidio.
\newblock On the fractional counterpart of the higher-order equations.
\newblock \emph{Statistics \& Probability Letters}, 81:\penalty0 1929 -- 1939,
  2011.

\bibitem[D'Ovidio and Orsingher(2011)]{DO2}
M.~D'Ovidio and E.~Orsingher.
\newblock {Bessel processes and hyperbolic Brownian motions stopped at
  different random times}.
\newblock \emph{Stochastic Processes and their Applications}, 121:\penalty0 441
  -- 465, 2011.

\bibitem[Dufresne(1990)]{Dufresne90}
D.~Dufresne.
\newblock The distribution of a perpetuity, with applications to risk theory
  and pension funding.
\newblock \emph{Scand. Actuar. J.}, 39:\penalty0 39 -- 79, 1990.

\bibitem[Feller(1952)]{Feller52}
W.~Feller.
\newblock {On a generalization of Marcel Riesz' potentials and the semigroups
  generated by them}.
\newblock \emph{Communications du seminaire mathematique de universite de Lund,
  tome suppli-mentaire}, 1952.
\newblock dedie' a Marcel Riesz.

\bibitem[Fox(1961)]{FOX61}
C.~Fox.
\newblock {The G and H functions as symmetrical Fourier kernels}.
\newblock \emph{Trans. Amer. Math. Soc.}, 98:\penalty0 395 -- 429, 1961.

\bibitem[Giona and Roman(1992)]{GioRom92}
M.~Giona and H.~Roman.
\newblock Fractional diffusion equation on fractals: one-dimensional case and
  asymptotic behavior.
\newblock \emph{J. Phys. A}, 25:\penalty0 2093 -- 2105, 1992.

\bibitem[Glaeske et~al.(2006)Glaeske, Prudnikov, and Sk\`ornik]{GPSko06}
H.~Glaeske, A.~Prudnikov, and K.~Sk\`ornik.
\newblock \emph{Operational Calculus and Related Topics}.
\newblock Chapman \& Hall/CRC. Taylor \& Francis Group, 2006.

\bibitem[Gorenflo and Mainardi(1997)]{GM97}
R.~Gorenflo and F.~Mainardi.
\newblock {Fractional calculus: integral and differential equations of
  frational order, in A. Carpinteri and F. Mainardi (Editors)}.
\newblock \emph{Fractals and Fractional Calculus in Continuum Mechanics}, pages
  223 -- 276, 1997.
\newblock Wien and New York, Springher Verlag.

\bibitem[Gradshteyn and Ryzhik(2007)]{GR}
I.~S. Gradshteyn and I.~M. Ryzhik.
\newblock \emph{{Table of integrals, series and products}}.
\newblock Academic Press, 2007.
\newblock Seventh edition.

\bibitem[Hilfer(2000)]{Hil00}
R.~Hilfer.
\newblock {Fractional diffusion based on Riemann-Liouville fractional
  derivatives}.
\newblock \emph{J. Phys. Chem. B}, 104:\penalty0 3914 -- 3917, 2000.

\bibitem[H\"ovel and Westphal(1972)]{HoWe72}
H.~H\"ovel and U.~Westphal.
\newblock Fractional powers of closed operators.
\newblock \emph{Studia Math.}, 42:\penalty0 177 -- 194, 1972.

\bibitem[James(2010)]{Lanc}
L.~F. James.
\newblock Lamperti type laws.
\newblock \emph{Ann. App. Probab.}, 20:\penalty0 1303 -- 1340, 2010.

\bibitem[Kilbas et~al.(2006)Kilbas, Srivastava, and Trujillo]{KST06}
A.~Kilbas, H.~Srivastava, and J.~Trujillo.
\newblock \emph{{Theory and applications of fractional differential equations
  (North-Holland Mathematics Studies)}}, volume 204.
\newblock Elsevier, Amsterdam, 2006.

\bibitem[Kochubei(1989)]{Koc89}
A.~N. Kochubei.
\newblock {The Cauchy problem for evolution equations of fractional order}.
\newblock \emph{Differential Equations}, 25:\penalty0 967 -- 974, 1989.

\bibitem[Kochubei(1990)]{Koc90}
A.~N. Kochubei.
\newblock Diffusion of fractional order.
\newblock \emph{Lecture Notes in Physics}, 26:\penalty0 485 -- 492, 1990.

\bibitem[Komatsu(1966)]{Kom66}
H.~Komatsu.
\newblock Fractional powers of operators.
\newblock \emph{Pacific J. Math.}, 19:\penalty0 285 -- 346, 1966.

\bibitem[Krasnosel'skii and Sobolevskii(1959)]{KraSob59}
M.~A. Krasnosel'skii and P.~E. Sobolevskii.
\newblock {Fractional powers of operators acting in Banach spaces}.
\newblock \emph{Doklady Akad. Nauk SSSR}, 129:\penalty0 499 -- 502, 1959.

\bibitem[Lamperti(1963)]{Lamp63}
J.~Lamperti.
\newblock An occupation time theorem for a class of stochastic processes.
\newblock \emph{Trans. Amer. Math. Soc.}, 88:\penalty0 380 -- 387, 1963.

\bibitem[Lebedev(1972)]{LE}
N.~N. Lebedev.
\newblock \emph{{Special functions and their applications}}.
\newblock Dover, New York, 1972.

\bibitem[Mainardi et~al.(2001)Mainardi, Luchko, and Pagnini]{MLP01}
F.~Mainardi, Y.~Luchko, and G.~Pagnini.
\newblock The fundamental solution of the space-time fractional diffusion
  equation.
\newblock \emph{Fract. Calc. Appl. Anal.}, 4\penalty0 (2):\penalty0 153 -- 192,
  2001.

\bibitem[Mainardi et~al.(2003)Mainardi, Pagnini, and Gorenflo]{MPG03}
F.~Mainardi, G.~Pagnini, and R.~Gorenflo.
\newblock Mellin transform and subordination laws in fractional diffusion
  processes.
\newblock \emph{Fract. Calc. Appl. Anal.}, 6\penalty0 (4):\penalty0 441 -- 459,
  2003.

\bibitem[Mainardi et~al.(2005)Mainardi, Pagnini, and Saxena]{MPS05}
F.~Mainardi, G.~Pagnini, and R.~K. Saxena.
\newblock {Fox H functions in fractional diffusion}.
\newblock \emph{Journal of Computation and Applied Mathematics}, 178:\penalty0
  321 -- 331, 2005.

\bibitem[Mainardi et~al.(2007)Mainardi, Pagnini, and Gorenflo]{MPG07}
F.~Mainardi, G.~Pagnini, and R.~Gorenflo.
\newblock Some aspects of fractional diffusion equations of single and
  distributed order.
\newblock \emph{Applied Mathematics and Computing}, 187:\penalty0 295 -- 305,
  2007.

\bibitem[Mainardi et~al.(2010)Mainardi, Mura, and Pagnini]{MMP10}
F.~Mainardi, A.~Mura, and G.~Pagnini.
\newblock {The M-Wright function in time-fractional diffusion processes: a
  tutorial survey}.
\newblock \emph{International Juornal of Differential Equations}, 2010.
\newblock doi:10.1155/2010/104505.

\bibitem[Mathai and Saxena(1973)]{MS73}
A.~Mathai and R.~Saxena.
\newblock \emph{{Generalized Hypergeometric functions with applications in
  statistics and physical sciences}}.
\newblock Lecture Notes in Mathematics, n. 348, 1973.

\bibitem[Meerschaert and Scheffler(2008)]{MS08}
M.~Meerschaert and H.~P. Scheffler.
\newblock Triangular array limits for continuous time random walks.
\newblock \emph{Stoch. Proc. Appl.}, 118:\penalty0 1606 -- 1633, 2008.

\bibitem[Meerschaert and Scheffler(2004)]{MSheff04}
M.~Meerschaert and H.~P. Scheffler.
\newblock Limit theorems for continuous time random walks with infinite mean
  waiting times.
\newblock \emph{J. Appl. Probab.}, 41:\penalty0 623 -- 638, 2004.

\bibitem[Meerschaert et~al.(2009)Meerschaert, Nane, and Vellaisamy]{BMN09ann}
M.~Meerschaert, E.~Nane, and P.~Vellaisamy.
\newblock {Fractional Cauchy problems on bounded domains}.
\newblock \emph{Ann. Probab.}, 37\penalty0 (3):\penalty0 979 -- 1007, 2009.

\bibitem[Metzler and Klafter(2000{\natexlab{a}})]{MezK00}
R.~Metzler and J.~Klafter.
\newblock Boundary value problems for fractional diffusion equations.
\newblock \emph{Physica A}, 278:\penalty0 107 -- 125, 2000{\natexlab{a}}.

\bibitem[Metzler and Klafter(2000{\natexlab{b}})]{MezK00secondo}
R.~Metzler and J.~Klafter.
\newblock The random walk's guide to anomalous diffusion: a fractional dynamics
  approach.
\newblock \emph{Phys. Rep.}, 339:\penalty0 1 -- 77, 2000{\natexlab{b}}.

\bibitem[Nane(2010)]{NANERW}
E.~Nane.
\newblock {Fractional Cauchy problems on bounded domains: survey of recent
  results}.
\newblock \emph{arXiv:1004.1577v1}, 2010.

\bibitem[Nigmatullin(1986)]{Nig86}
R.~Nigmatullin.
\newblock The realization of the generalized transfer in a medium with fractal
  geometry.
\newblock \emph{Phys. Status Solidi B}, 133:\penalty0 425 -- 430, 1986.

\bibitem[Orsingher and Beghin(2009)]{OB09}
E.~Orsingher and L.~Beghin.
\newblock {Fractional diffusion equations and processes with randomly varying
  time}.
\newblock \emph{Ann. Probab.}, 37:\penalty0 206 -- 249, 2009.

\bibitem[Orsingher and Zhao(2003)]{OZXUE03}
E.~Orsingher and X.~Zhao.
\newblock The space-fractional telegraph equation and the related fractional
  telegraph process.
\newblock \emph{Chinese Ann. Math. Ser. B}, 24:\penalty0 45 -- 56, 2003.

\bibitem[Pe\v{s}kir(2006)]{Pes06}
G.~Pe\v{s}kir.
\newblock {On the fundamental solution of the Kolmogorov-Shiryaev equation}.
\newblock \emph{From stochastic calculus to mathematical finance}, pages 535 --
  546, 2006.
\newblock Sringher, Berlin.

\bibitem[Pollack and Siegmund(1985)]{PolSieg85}
M.~Pollack and D.~Siegmund.
\newblock {A diffusion process and its applications to detecting a change in
  the drift of Brownian motion}.
\newblock \emph{Biometrika}, 72:\penalty0 267 -- 280, 1985.

\bibitem[Roman and Alemany(1994)]{RomAle94}
H.~Roman and P.~Alemany.
\newblock Continuous-time random walks and the fractional diffusion equation.
\newblock \emph{J. Phys. A}, 27:\penalty0 3407 -- 3410, 1994.

\bibitem[Samko(2000)]{Samko00}
S.~Samko.
\newblock Fractional power of operators via hypersingular integrals.
\newblock \emph{Progress in Nonlinear Diff. Eq. and Their Applic.},
  42:\penalty0 259 -- 272, 2000.
\newblock Birkh\"auser.

\bibitem[Samko et~al.(1993)Samko, Kilbas, and Marichev]{SKM93}
S.~Samko, A.~A. Kilbas, and O.~I. Marichev.
\newblock \emph{Fractional Integrals and Derivatives: Theory and Applications}.
\newblock Gordon and Breach, Newark, N. J., 1993.

\bibitem[Schneider and Wyss(1989)]{SWyss89}
W.~Schneider and W.~Wyss.
\newblock Fractional diffusion and wave equations.
\newblock \emph{J. Math. Phys.}, 30:\penalty0 134 -- 144, 1989.

\bibitem[Shiga and Watanabe(1973)]{SW73}
T.~Shiga and S.~Watanabe.
\newblock Bessel diffusions as a one-parameter family of diffusion processes.
\newblock \emph{Z. Wahrscheinlichkeitstheorie Verw. Geb.}, 27:\penalty0 37 --
  46, 1973.

\bibitem[Ucha\u{i}kin(2003)]{Uch03}
V.~V. Ucha\u{i}kin.
\newblock {Self-similar anomalous diffusion and Levy-stable laws}.
\newblock \emph{Physics - Uspekhi}, 46\penalty0 (8):\penalty0 821 -- 849, 2003.

\bibitem[Watanabe(1961)]{Wata61}
J.~Watanabe.
\newblock On some properties of fractional powers of linear operators.
\newblock \emph{Proc. Japan Acad. Ser. A Math. Sci.}, 37:\penalty0 273 -- 275,
  1961.

\bibitem[Wyss(1986)]{Wyss86}
W.~Wyss.
\newblock The fractional diffusion equations.
\newblock \emph{J. Math. Phys.}, 27:\penalty0 2782 -- 2785, 1986.

\bibitem[Zaslavsky(1994)]{Zasl94}
G.~Zaslavsky.
\newblock {Fractional kinetic equation for Hamiltonian chaos}.
\newblock \emph{Phys. D}, 76:\penalty0 110 -- 122, 1994.

\bibitem[Zolotarev(1957)]{Zol57}
V.~M. Zolotarev.
\newblock {Mellin-Stieltjes transformations in probability theory}.
\newblock \emph{Teor. Veroyatnost. i Primenen}, 2:\penalty0 444 -- 469, 1957.
\newblock Russian.

\bibitem[Zolotarev(1986)]{Zol86}
V.~M. Zolotarev.
\newblock \emph{{One-dimensional stable distributions, volume 65 of
  Translations of Mathematical Monographs}}.
\newblock American Mathematical Society, 1986.
\newblock ISBN 0-8218-4519-5. Translated from the Russian by H. H. McFaden,
  Translation edited by Ben Silver.

\end{thebibliography}
\end{document}